\newtheorem{thm}{Theorem}[section]
\newtheorem{prop}[thm]{Proposition}
\newtheorem{cor}[thm]{Corollary}
\newtheorem{defn}[thm]{Definition}
\newtheorem{lem}[thm]{Lemma}
\begin{document}

\title{Generalized Riordan Groups and Operators on Polynomials}

\author{Shaul Zemel\thanks{This work was carried out while I was working at the Technische Universit\"{a}t Darmstadt in Germany, and was partially supported by DFG grant BR-2163/4-1.}}

\maketitle


\section*{Introduction}

Sequences of polynomials appear in many branches of mathematics and physics. Examples of such sequences are, e.g., the ones named after Bernoulli, Euler, Hermite, Lagrange, Laguerre, and others. Many of them are known to be \emph{Sheffer sequences}, one of its equivalent defining properties is that the linear operator taking $p_{n}$ to $np_{n-1}$ commutes with the translation operators. The umbral calculus was invented, initially symbolically and afterwards more rigorously, in order to understand the combinatorics of such sequences of polynomials, and prove identities between them and between their coefficients. One feature of these sequences is that their coefficients may be seen as the entries of certain infinite lower triangular matrices called \emph{Riordan arrays}.

There is a vast literature on the subject, large parts of which are aimed at applications to special sequences of polynomials. We first mention \cite{[R]}, which provides a clear introduction to the basics of this theory. A very partial list of references, in which one may find ways to deduce interesting results from the Riordan (or Sheffer) property, consists of \cite{[LM]}, \cite{[Sp]}, and the references cited there, as well as the older reference \cite{[Sh]}. In these references the Sheffer sequences are related to some shift operators, Riordan arrays, recurrence relations, linear functionals on polynomials, and further algebraic and combinatorial objects.

In this paper, however, we are interested in a more algebraic approach to this theory. \cite{[SGWW]} considers the set of Riordan arrays as a group (coining the term as well). \cite{[WW]} considers the more general groups of Riordan arrays, which are related to the non-classical umbral calculi appearing also in Chapter 6 of \cite{[R]} (both references, and the papers cited therein, should be added to the list from the previous paragraph). We consider the set of all  graded sequences of polynomials (with no additional properties) as a group of infinite, lower-triangular matrices. We then present our point of view on Sheffer sequences and Riordan arrays in the generalized sense of \cite{[WW]}, in particular as a subgroup of this bigger group. Next, we relate several equivalent properties that these sequences have: The column vectors of their matrices give, using the appropriate weights, a geometric sequence of formal power series; Their weighted generating function has a succinct expression as a function of two variables; The associated derivative-like operator commutes with appropriate translation-like operators; They respect a certain product rule for linear functionals on polynomials; And their weighted dual basis takes the form of a geometric sequence.

The group of $W$-Sheffer sequences contains two natural subgroups: The group of $W$-binomial sequences (which are characterized by their values at 0), and the group of $W$-Appell sequences. Example of those, together with some of their properties, are also given in the references mentioned above. As abstract groups these are isomorphic to formal power series of valuation 1 with composition and formal power series of valuation 0 with multiplication respectively. The $W$-Sheffer sequences are their semi-direct product. This structure is the same for every weight. Now, our large group of all graded sequences admits an action on a certain set of operators. We prove that the $W$-Appell sequences form the stabilizer of one such operator, and the $W$-Riordan group is its normalizer in the larger group. As this action is transitive, this explains why the algebraic structure must indeed be independent of the weight. Note that unlike many authors, we do not assume (except in one example) that our base field is of characteristic 0.

\smallskip

This paper is divided to 5 sections. In Section \ref{RArray} we present the large group, generalized Riordan arrays, and the relation to special forms of power series in two variables. Section \ref{ShefSeq} defines Sheffer sequences (and their subgroups) via translation and derivation operators, and proves the equivalence to Riordan arrays. In Section \ref{LinFunc} we investigate linear functionals on polynomials, and prove the relation between Sheffer sequences and products on such functionals. Section \ref{GroupOp} introduces the action of sequences on operators, and shows that Sheffer sequences normalize a stabilizer in this action. Finally, Section \ref{ExRel} gives the classical examples in this language, and also proves a result about the intersection of a generalized Riordan group of some weight with the Appell subgroup of the generalized Riordan group of a different weight.

\smallskip

I am greatful to A.-M. Luz\'{o}n-Cordero for her willingness to read a preliminary version of this paper, as well as for her encouragement to send it to publication.

\section{Riordan Groups \label{RArray}}

Let $\mathbb{F}$ be a field. $\mathbb{F}[x]$ denotes the ring of polynomials in one indeterminate $x$, and $\mathbb{F}\ldbrack y \rdbrack$ is the ring of formal power series in one indeterminate $y$ over $\mathbb{F}$. The latter ring is a complete discrete valuation ring. The valuation $v(C)$ of a non-zero element $C(y)=\sum_{n=0}^{\infty}c_{n}y^{n}\in\mathbb{F}\ldbrack y \rdbrack$ is the minimal index $n$ such that $c_{n}\neq0$, and we define $v(0)=\infty$.

Let two sequences $\{p_{n}(x)\}_{n\in\mathbb{N}}$ and $\{q_{n}(x)\}_{n\in\mathbb{N}}$ of polynomials in $\mathbb{F}[x]$ be given. Their \emph{umbral composition} $\{r_{n}(x)\}_{n\in\mathbb{N}}$ is defined as follows: If \[p_{n}(x)=\sum_{k=0}^{d_{n}}a_{n,k}x^{k}\quad\mathrm{with\ }a_{n,k}\in\mathbb{F}\quad\mathrm{then}\quad r_{n}(x)=\sum_{k=0}^{d_{n}}a_{n,k}q_{k}(x)\] (where $d_{n}$ is the degree of $p_{n}$). We consider these sequences as infinite column vectors (with integer indices $\geq0$) over $\mathbb{F}[x]$, where the natural ``base point'' for these sequences is the sequence of monomials $m_{n}(x)=x^{n}$. We now have \begin{prop}
For any sequence $\{p_{n}(x)\}_{n\in\mathbb{N}}$ there exists a unique infinite matrix $A$, in which each row has only finitely many non-zero entries, taking the monomial sequence to $\{p_{n}(x)\}_{n\in\mathbb{N}}$. Conversely, every row-finite matrix arises in this way from a unique sequence of polynomials. The umbral composition of sequences corresponds to the matrix product. \label{seqmat}
\end{prop}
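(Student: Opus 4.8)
The plan is to set up an explicit correspondence between sequences of polynomials and row-finite matrices, using the monomial sequence $\{m_n\}$ as a reference, and then check that this correspondence is bijective and intertwines umbral composition with matrix multiplication. First I would, given a sequence $\{p_n(x)\}_{n\in\mathbb{N}}$ with $p_n(x)=\sum_{k=0}^{d_n}a_{n,k}x^k$, simply declare $A$ to be the matrix whose $(n,k)$ entry is $a_{n,k}$ for $0\le k\le d_n$ and $0$ for $k>d_n$. The condition $k\le d_n$ ensures each row has only finitely many nonzero entries, so $A$ is row-finite; and the matrix-times-column-vector product $A$ applied to the column $(m_0,m_1,\dots)^{t}=(1,x,x^2,\dots)^{t}$ has $n$-th entry $\sum_{k\ge 0}a_{n,k}x^k=p_n(x)$, so $A$ sends the monomial sequence to $\{p_n\}$. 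Uniqueness of $A$ is immediate: if $A'$ also sends $\{m_n\}$ to $\{p_n\}$, then for each $n$ the polynomial identity $\sum_k a'_{n,k}x^k=p_n(x)=\sum_k a_{n,k}x^k$ forces $a'_{n,k}=a_{n,k}$ for all $k$, since the monomials $x^k$ are linearly independent in $\mathbb{F}[x]$ (this is where the row-finiteness is used again — the sums are finite, so the identification of coefficients is legitimate).

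For the converse I would run the same construction backwards: given a row-finite matrix $A=(a_{n,k})$, define $p_n(x):=\sum_{k}a_{n,k}x^k$, which is a genuine polynomial precisely because row $n$ has finitely many nonzero entries. By the first part this sequence produces $A$, and it is the unique such sequence because the $p_n$ are literally read off as the entries of $A\cdot(1,x,x^2,\dots)^{t}$. Thus the two assignments are mutually inverse, giving the claimed bijection between row-finite matrices and sequences of polynomials.

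Finally, for the multiplicativity statement, I would take two sequences $\{p_n\}$ and $\{q_n\}$ with associated matrices $A=(a_{n,k})$ and $B=(b_{k,j})$, form the umbral composition $r_n(x)=\sum_{k}a_{n,k}q_k(x)$, and expand: $r_n(x)=\sum_k a_{n,k}\sum_j b_{k,j}x^j=\sum_j\big(\sum_k a_{n,k}b_{k,j}\big)x^j$. The inner sum $\sum_k a_{n,k}b_{k,j}$ is exactly the $(n,j)$ entry of the matrix product $AB$, so the matrix of $\{r_n\}$ is $AB$, as required. I would note that the product $AB$ of two row-finite matrices is again row-finite and the entrywise sums defining it are finite, so the interchange of summation order above is purely formal and needs no convergence hypothesis; equivalently, the umbral composition $r_n$ of two polynomial sequences is again a polynomial sequence. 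The only mild subtlety worth a sentence is making precise in which sense a row-finite matrix acts on the column vector of polynomials: the action is well defined exactly because row-finiteness makes each coordinate of the image a finite $\mathbb{F}$-linear combination of the $x^k$, i.e.\ a polynomial; there is no real obstacle here, just bookkeeping, and this is the point I would state carefully at the outset so that the three verifications go through without comment.
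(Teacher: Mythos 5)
Your proof is correct and is exactly the direct coefficient-matching verification that the paper has in mind when it says the proof is ``simple and straightforward'' (including the observation, made in the paper right after the proposition, that row-finiteness keeps all sums finite so the matrix product and the umbral composition are well defined). Nothing to add.
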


The proof is simple and straightforward. Note that the essential finiteness of all the rows makes the matrix product in Proposition \ref{seqmat} well-defined. In addition, this property is preserved in products: If $A$ and $B$ correspond to the sequences $\{p_{n}(x)\}_{n\in\mathbb{N}}$ and $\{q_{n}(x)\}_{n\in\mathbb{N}}$ respectively, and if $d_{n}$ and $e_{n}$ are the degrees of $p_{n}$ and $q_{n}$ respectively, then $a_{n,k}=0$ for all $k>d_{n}$ and $b_{k,l}=0$ for all $l>e_{k}$. Hence $(AB)_{n,l}=0$ wherever $l>\max\{e_{k}\}_{k=0}^{d_{n}}$, so that the $n$th row of $AB$ also contains just finitely many non-zero entries. This corresponds to the fact that the umbral composition of two sequences of polynomials is indeed a sequence of polynomials.

We shall restrict attention to sequences of polynomials which are \emph{graded}, i.e., in which $d_{n}=n$ for all $n$. For these sequences we have
\begin{prop}
The sequence $\{p_{n}(x)\}_{n\in\mathbb{N}}$ is graded if and only if the corresponding matrix $A$ is lower triangular and invertible. The set of such matrices forms a group $L=L_{\infty}(\mathbb{F})$ under matrix multiplication. \label{lowtrigrad}
\end{prop}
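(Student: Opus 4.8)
\textit{Proof proposal.} The plan is to reduce every infinite-dimensional statement to finite linear algebra, using that lower triangularity allows one to truncate cleanly to top-left blocks. Throughout, write $A^{(n)}$ for the finite $(n+1)\times(n+1)$ matrix obtained from $A$ by keeping the rows and columns indexed $0,\dots,n$. The one structural fact I would establish first is that if $A$ is lower triangular and $B$ is row-finite, then $AB$ is again lower triangular and, for all $i,j\le n$,
\[
(AB)_{i,j}=\sum_{k\le i}a_{i,k}b_{k,j}=\sum_{k\le n}a_{i,k}b_{k,j},
\]
so that $(AB)^{(n)}=A^{(n)}B^{(n)}$. This is elementary (the sums are finite by row-finiteness, exactly as in Proposition \ref{seqmat}), and it is the only genuinely ``infinite'' ingredient in the argument.

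For the direction \emph{graded $\Rightarrow$ lower triangular and invertible}: gradedness says $a_{n,k}=0$ for $k>n$ (lower triangularity) and that the leading coefficient $a_{n,n}$ is nonzero. Hence each $A^{(n)}$ is a finite lower triangular matrix with nonzero diagonal, so it is invertible and $(A^{(n)})^{-1}$ is again lower triangular. Because $A^{(n+1)}$ is block lower triangular with $A^{(n)}$ in the top-left corner and $a_{n+1,n+1}$ in the bottom-right, the matrix $(A^{(n)})^{-1}$ is precisely the top-left block of $(A^{(n+1)})^{-1}$; so these finite inverses are mutually compatible and glue to a single infinite lower triangular matrix $B$, which is automatically row-finite. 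The block identity then gives $(AB)^{(n)}=(BA)^{(n)}=I$ for every $n$, while all entries of $AB$ and $BA$ strictly above the diagonal vanish by lower triangularity, so $AB=BA=I$.

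For the converse, suppose $A$ is lower triangular and admits an inverse $B$ among the row-finite matrices (a right inverse already suffices). Then $A^{(n)}B^{(n)}=(AB)^{(n)}=I$ for all $n$, so each $A^{(n)}$ is an invertible finite matrix; being lower triangular, $0\neq\det A^{(n)}=\prod_{k=0}^{n}a_{k,k}$, so every diagonal entry $a_{n,n}$ is nonzero. Together with $\deg p_n\le n$ (forced by lower triangularity) this yields $\deg p_n=n$, i.e.\ the sequence is graded.

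Finally, $L$ is a group: matrix multiplication of row-finite matrices is associative with unit $I$ (implicit in Proposition \ref{seqmat}); products of lower triangular matrices are lower triangular; the product of two elements of $L$ is invertible by the first direction above, with $(A_1A_2)^{-1}=A_2^{-1}A_1^{-1}$; and the inverse constructed above is itself lower triangular and row-finite, hence lies in $L$. The step I expect to require the most care is exactly the equivalence between invertibility and non-vanishing of the diagonal for genuinely infinite matrices, and the block-truncation identity from the first paragraph is what makes it routine.
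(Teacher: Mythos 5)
Your proof is correct, and it fills in exactly the argument the paper leaves implicit by declaring the proposition ``simple and direct'': truncation to the top-left $(n+1)\times(n+1)$ blocks, the identity $(AB)^{(n)}=A^{(n)}B^{(n)}$ for lower triangular $A$ and row-finite $B$, and gluing the compatible finite inverses (equivalently, solving for the inverse by forward substitution). The only point worth stating explicitly is that ``invertible'' is meant inside the monoid of row-finite matrices from Proposition \ref{seqmat}, which is precisely how you use it, so nothing is missing.
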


The proof is again simple and direct. It follows from Propositions \ref{seqmat} and \ref{lowtrigrad} that the umbral composition of two graded sequences is also graded.

\smallskip

Let $W=W(t)=\sum_{n=0}^{\infty}\frac{t^{n}}{w_{n}}$ be an element of $\mathbb{F}\ldbrack t \rdbrack$ in which $w_{0}=1$ and $w_{n}\neq0$ for every $n$, which we fix once and for all. Dually to multiplication of column vectors from the left, right multiplication by the infinite row vector whose $n$th entry is $\frac{y^{n}}{w_{n}}$ easily establishes the following
\begin{prop}
Infinite matrices $A$ over $\mathbb{F}$ are in one-to-one correspondence with sequences $\big\{C_{A_{k},W}(y)\big\}_{k=0}^{\infty}$ of elements of $\mathbb{F} \ldbrack y \rdbrack$. The matrix $A$ is row-finite if and only if $v(C_{A_{k},W})\to\infty$ as $k\to\infty$. In particular we have $A \in L$ if and only if $v(C_{A_{k},W})=k$ for every $k\in\mathbb{N}$. \label{seqpowser}
\end{prop}

The power series $C_{A_{k},W}(y)$ is written explicitly as $\sum_{n=0}^{\infty}a_{n,k}\frac{y^{n}}{w_{n}}$. We shall write just $C_{A_{k}}$ instead of $C_{A_{k},W}$ in case $W$ is clear from the context. If the choice of $A$ is also clear we may shorten $C_{A_{k}}$ further to just $C_{k}$.

We can combine the two operations from Propositions \ref{seqmat} and \ref{seqpowser} together, and obtain that for any row-finite matrix $A$ the two expressions $\sum_{k=0}^{\infty}C_{A_{k}}(y)x^{k}$ and $\sum_{n=0}^{\infty}p_{n}(x)\frac{y^{n}}{w_{n}}$ represent the same element of the ring $\mathbb{F}\ldbrack x,y \rdbrack$ of formal power series in two variables. The second presentation shows that this element lies, in fact, in the subring $\mathbb{F}[x]\ldbrack y \rdbrack$ of $\mathbb{F}\ldbrack x,y \rdbrack$. Note that the order of the variables in this subring is important, as elements of the ring $\mathbb{F}\ldbrack y \rdbrack[x]$ must satisfy the stronger requirement that the total degree in $x$ has to be bounded.

\smallskip

We recall that a power series $0 \neq C\in\mathbb{F} \ldbrack y \rdbrack$ divides another power series $D\in\mathbb{F} \ldbrack y \rdbrack$ is and only if $v(C) \leq v(D)$, and the quotient $\frac{D}{C}$ has valuation $v(D)-v(C)$. In particular the set $\mathbb{F} \ldbrack y \rdbrack^{\times}$ of invertible elements in $\mathbb{F} \ldbrack y \rdbrack$ consists precisely of the power series of valuation 0. We shall henceforth assume that all our sequences of polynomials are graded, unless explicitly stated otherwise. In particular, Proposition \ref{seqpowser} shows that the quotient $C_{A_{k+1},W}/C_{A_{k},W}$ is a well-defined element of valuation 1 in $\mathbb{F} \ldbrack y \rdbrack$ for any $A \in L$ and any $k\geq0$.

We now make the following
\begin{defn}
We say that $A \in L$ is a \emph{Riordan array of weight $W$}, or a
\emph{$W$-Riordan array}, if the series $\big\{C_{A_{k},W}(y)\big\})_{k=0}^{\infty}$ is geometric, i.e., if the quotient $w_{k+1}C_{A_{k+1},W}/w_{k}C_{A_{k},W}$, of valuation 1, is independent of $k$. \label{Riordan}
\end{defn}
Definition \ref{Riordan} is equivalent to Definition 2.1 of \cite{[WW]}, according to the following simple
\begin{prop}
The matrix $A \in L$ is a $W$-Riordan array if and only if the corresponding element in $\mathbb{F}[x]\ldbrack y \rdbrack$ is of the form
$\alpha(y)W\big(x\beta(y)\big)$ for power series $\alpha$ and $\beta$ in
$\mathbb{F}\ldbrack y \rdbrack$ with $v(\alpha)=0$ and $v(\beta)=1$.
\label{genfunc}
\end{prop}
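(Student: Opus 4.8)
The plan is to unwind the definitions on both sides and match coefficients. Recall from the discussion preceding the statement that the element of $\mathbb{F}[x]\ldbrack y \rdbrack$ attached to $A$ can be written in two ways: as $\sum_{n}p_{n}(x)\frac{y^{n}}{w_{n}}$ and, after reorganizing inside $\mathbb{F}\ldbrack x,y \rdbrack$, as $\sum_{k}C_{A_{k},W}(y)x^{k}$. So the first step is to observe that the element attached to $A$ determines, and is determined by, the sequence $\{C_{A_{k},W}\}_{k}$: namely it equals $\sum_{k\geq0}C_{A_{k},W}(y)\,x^{k}$.

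Next I would compute what $\alpha(y)W\big(x\beta(y)\big)$ looks like when expanded in powers of $x$. Since $W(t)=\sum_{k\geq0}\frac{t^{k}}{w_{k}}$, substituting $t=x\beta(y)$ gives $W\big(x\beta(y)\big)=\sum_{k\geq0}\frac{\beta(y)^{k}}{w_{k}}x^{k}$ (the substitution is legitimate because $v(\beta)=1$, so only finitely many terms contribute to each power of $y$, and the coefficient of $x^{k}$ is a genuine power series in $y$). Multiplying by $\alpha(y)$, the coefficient of $x^{k}$ in $\alpha(y)W\big(x\beta(y)\big)$ is exactly $\alpha(y)\frac{\beta(y)^{k}}{w_{k}}$.

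Now the two sides are compared coefficient-by-coefficient in $x$: the element attached to $A$ equals $\alpha(y)W\big(x\beta(y)\big)$ if and only if $C_{A_{k},W}(y)=\alpha(y)\frac{\beta(y)^{k}}{w_{k}}$ for every $k\geq0$. But, as established in the discussion following Definition \ref{Riordan}, this last condition — the existence of $\alpha$ with $v(\alpha)=0$ and $\beta$ with $v(\beta)=1$ such that $C_{A_{k}}=\alpha\cdot\frac{\beta^{k}}{w_{k}}$ for all $k$ — is precisely equivalent to $A\in R_{W}$. Chaining these equivalences gives the proposition, and moreover identifies the pair $(\alpha,\beta)$ appearing here with the one parametrizing $A$ as a Riordan array.

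The only point requiring genuine care — the main obstacle, such as it is — is the justification of the substitution $t\mapsto x\beta(y)$ into $W$ as a well-defined operation in $\mathbb{F}[x]\ldbrack y\rdbrack$ (equivalently in $\mathbb{F}\ldbrack x,y\rdbrack$), and the verification that reorganizing the double sum by powers of $x$ is legitimate on both sides. Both hinge on the same finiteness fact already used repeatedly in the section: for each fixed power $y^{n}$ only the terms with $k\leq n$ can contribute, because $v(\beta^{k})=k$ on one side and $v(C_{A_{k}})=k$ on the other, so every coefficient extraction involves only a finite sum. Once this is noted the rest is the bookkeeping above.
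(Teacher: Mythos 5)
Your argument is correct and is essentially the paper's own proof: expand $\alpha(y)W\big(x\beta(y)\big)$ in powers of $x$, identify the coefficient of $x^{k}$ with $C_{A_{k},W}(y)$, and invoke the characterization of $R_{W}$ by the condition $C_{A_{k}}=\alpha\cdot\frac{\beta^{k}}{w_{k}}$ following Definition \ref{Riordan}. The well-definedness remark about substituting $t=x\beta(y)$ matches the note the paper places just before its proof, so nothing is missing.
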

The proof is immediate from the expansion in powers of $x$ since a sequence $\big\{C_{k}(y)\big\})_{k=0}^{\infty}$ with $v(C_{k})=k$ is geometric if and only if it is of the form $C_{k}(y)=\alpha(y)\beta(y)^{k}$ with $\alpha=C_{0}$ is of valuation 0 and where $\beta$ is the quotient $w_{k+1}C_{A_{k+1},W}/w_{k}C_{A_{k},W}$ (independently of $k$), whose valuation is 1. We prefer to call it $W$-Riordan, rather that generalized Riordan, in order to emphasize its dependence on $W$. This will be particularly important when one is interested in changing $W$, as in Section \ref{ExRel} below. We denote the set of $W$-Riordan arrays by $R_{W}$. We may say just a \emph{Riordan array} in case the choice of $W$ is clear.

\smallskip

In addition to the multiplicative group $\mathbb{F} \ldbrack y \rdbrack^{\times}$, the set $y\mathbb{F} \ldbrack y \rdbrack^{\times}$ of elements of valuation 1 form a (non-commutative) group, with identity $e(y)=y$, under composition of power series. It turns out more convenient to use the convention of opposite composition, in which the product of the power series $\beta$ and $\delta$ is $\delta\circ\beta$. This composition law extends to the case where $v(\delta)$ is arbitrary, and in particular $v(\delta)=0$, yielding an action of the latter group on the former.

Theorem 2.4 of \cite{[WW]} now states that
\begin{prop}
The set $R_{W}$ is a subgroup of $L$. Moreover, it is isomorphic to the semi-direct product in which $y\mathbb{F} \ldbrack y \rdbrack^{\times}$ operates on $\mathbb{F} \ldbrack y \rdbrack^{\times}$ as above. \label{Rgroup}
\end{prop}
The idea is that if the left multiplier in a product is the element of $R_{W}$ with the parameters $\alpha$ and $\beta$ then its operation on our row vector takes out the scalar $\alpha(y)$ and transforms the variable $y$ by $\beta(y)$. According to Proposition \ref{Rgroup}, we call $R_{W}$ the \emph{$W$-Riordan group}, or just the \emph{Riordan group} in case $W$ is clear from the context.

\section{Sheffer Sequences \label{ShefSeq}}

Let $D_{W}:\mathbb{F}[x]\to\mathbb{F}[x]$ be the ``weighted derivative'', i.e., the linear operator which takes $\frac{x^{n}}{w_{n}}$ to $\frac{x^{n-1}}{w_{n-1}}$ for each $n\geq0$ (where $\frac{1}{w_{-1}}$ is defined to be 0). More generally, let $\{p_{n}(x)\}_{n\in\mathbb{N}}$ be a (graded) sequence of polynomials. There exists a unique operator, which we denote $Q_{W}$ (or $Q$ if $W$ is clear from the context), sending $\frac{p_{n}}{w_{n}}$ to $\frac{p_{n-1}}{w_{n-1}}$ for every $n$. Given $h\in\mathbb{F}$ we define the \emph{$W$-translation in $h$} to be the element $T_{h,W}$ of $L$ taking $\frac{x^{n}}{w_{n}}$ to $\sum_{k=0}^{n}\frac{h^{n-k}}{w_{n-k}}\cdot\frac{x^{k}}{w_{k}}$. We now make the following
\begin{defn}
The sequence $\{p_{n}(x)\}_{n\in\mathbb{N}}$ is called a \emph{Sheffer sequence of weight $W$}, or a \emph{$W$-Sheffer sequence}, if the operator $Q_{W}$ commutes with all the $W$-translations. \label{Sheffer}
\end{defn}
\cite{[WW]} (and others, in special cases) define a Sheffer sequences to be a (graded) sequence of polynomials whose associated element $A \in L$ from Proposition \ref{seqmat} lie in $R_{W}$. We will see in Theorem \ref{comp} that these conditions are indeed equivalent. This is known for a long time in the exponential case of $c_{n}=n!$ (see \cite{[Sh]}), but, as far as I know, not in the general case. In case confusion as to the sequence of polynomials (or element $A \in L$) may arise, we may use the notation $Q_{A,W}$ or $Q_{A}$ for the appropriate operator $Q_{W}$ or $Q$.

Given a graded sequence $\{p_{n}(x)\}_{n\in\mathbb{N}}$, it is clear from the definition of $T_{h,W}$ that the coefficient of $x^{k}$ in $T_{h,W}(p_{n})$ is a polynomial in $h$ of degree $n-k$. The same assertion continues to hold in case we express $T_{h,W}(p_{n})$ using the basis $\{p_{k}\}_{k\in\mathbb{N}}$ of $\mathbb{F}[x]$. We may therefore write \[T_{h,W}\bigg(\frac{p_{n}}{w_{n}}\bigg)(x)=\sum_{k=0}^{n}\frac{d_{n,k,A,W}(h)}{w_{n-k}}\cdot\frac{p_{k}(x)}{w_{k}},\quad\mathrm{with}\quad d_{n,k,A,W}(t)\in\mathbb{F}[t],\] and the degree of $d_{n,k,A,W}$ is $n-k$. Our next assertion is
\begin{lem}
The sequence $\{p_{n}(x)\}_{n\in\mathbb{N}}$ is $W$-Sheffer if and only if the polynomials $d_{n,k,A,W}$ depend only on the difference $n-k$. \label{Shefbin}
\end{lem}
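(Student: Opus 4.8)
The plan is to translate the commutation condition $Q_W T_{h,W} = T_{h,W} Q_W$ into a relation among the polynomials $d_{n,k,A,W}$, and then read off that this relation holds for all $h$ exactly when $d_{n,k,A,W}$ depends only on $n-k$. First I would compute the two composites on the basis element $\frac{p_n}{w_n}$. Applying $T_{h,W}$ first and then $Q_W$ gives, using the displayed expansion of $T_{h,W}\big(\frac{p_n}{w_n}\big)$ and the defining property $Q_W\big(\frac{p_k}{w_k}\big)=\frac{p_{k-1}}{w_{k-1}}$, the vector $\sum_{k=1}^{n}\frac{d_{n,k,A,W}(h)}{w_{n-k}}\cdot\frac{p_{k-1}}{w_{k-1}}$, which after reindexing $k\mapsto k+1$ becomes $\sum_{k=0}^{n-1}\frac{d_{n,k+1,A,W}(h)}{w_{n-k-1}}\cdot\frac{p_{k}}{w_{k}}$. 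Going the other way, $Q_W\big(\frac{p_n}{w_n}\big)=\frac{p_{n-1}}{w_{n-1}}$ and then applying $T_{h,W}$ gives $\sum_{k=0}^{n-1}\frac{d_{n-1,k,A,W}(h)}{w_{n-1-k}}\cdot\frac{p_{k}}{w_{k}}$.

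Since $\{p_k\}_{k\in\mathbb{N}}$ is a basis of $\mathbb{F}[x]$, the sequence is $W$-Sheffer if and only if these two expansions agree for every $n\geq1$, every $0\le k\le n-1$, and every $h\in\mathbb{F}$; comparing the coefficient of $\frac{p_k}{w_k}$ and noting that the weight factors $\frac{1}{w_{n-k-1}}$ and $\frac{1}{w_{(n-1)-k}}$ coincide, this is exactly the family of polynomial identities
\[
d_{n,k+1,A,W}(h)=d_{n-1,k,A,W}(h)\qquad\text{for all }1\le n,\ 0\le k\le n-1.
\]
If $d_{n,k,A,W}$ depends only on $n-k$, both sides equal the polynomial attached to the difference $n-k-1$, so the identity is automatic and the sequence is $W$-Sheffer. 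Conversely, if the sequence is $W$-Sheffer, the displayed identities let one move along the diagonals: $d_{n,k,A,W}=d_{n-1,k-1,A,W}$ whenever $k\ge1$, and iterating this shows $d_{n,k,A,W}=d_{n-k,0,A,W}$ for every pair, so $d_{n,k,A,W}$ indeed depends only on $n-k$.

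The one point that needs care — and the only genuine obstacle — is the logical quantifier over $h$. A priori the commutation is an identity of operators on $\mathbb{F}[x]$, hence holds for each fixed $h\in\mathbb{F}$; but the coefficients $d_{n,k,A,W}$ are honest polynomials in $t$, and we want an identity in $\mathbb{F}[t]$. If $\mathbb{F}$ is infinite this is immediate, since a polynomial vanishing at every point of $\mathbb{F}$ is zero. Over a finite field one must instead observe that $T_{h,W}$ is, by construction, the specialization at $t=h$ of a single operator defined over $\mathbb{F}[t]$ (indeed over $\mathbb{F}[t]/(\text{nothing})$, acting on $\mathbb{F}[t][x]$), so the commutation in Definition \ref{Sheffer}, being required for the family indexed by all $h$, is equivalent to the commutation of the universal $t$-version; comparing coefficients there gives the polynomial identities directly. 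I would state this reduction once at the start and then carry out the coefficient comparison above without further comment.
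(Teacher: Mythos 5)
Your proof is correct and follows essentially the same route as the paper: you evaluate both composites $Q_{A,W}T_{h,W}$ and $T_{h,W}Q_{A,W}$ on $\frac{p_{n}}{w_{n}}$, use that $\big\{\frac{p_{k}}{w_{k}}\big\}_{k}$ is a basis of $\mathbb{F}[x]$ to extract the identities $d_{n,k+1,A,W}=d_{n-1,k,A,W}$, and propagate these along diagonals to get dependence on $n-k$ only; this is exactly the paper's argument. The only point where you diverge is the quantifier-over-$h$ discussion, and there your finite-field remedy is asserted rather than proved: commutation of $Q_{A,W}$ with the finitely many specializations $T_{h,W}$, $h\in\mathbb{F}$, is a priori strictly weaker than commutation with the universal operator over $\mathbb{F}[t]$, and the equivalence you claim to ``observe'' is precisely the nontrivial implication, since the values of the polynomials $d_{n,k,A,W}$ at the elements of a finite field do not determine them once their degrees exceed $|\mathbb{F}|$. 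Over an infinite field your reduction is complete and makes explicit a step the paper passes over silently (the paper's proof also identifies pointwise agreement for all $h$ with equality in $\mathbb{F}[t]$ without comment); over a finite field the honest fix is to read Definition \ref{Sheffer} with $h$ treated as an indeterminate, or equivalently to demand commutation with $D_{W}$, which is the convention the rest of the paper effectively adopts. So: same approach, correct in the regime the paper works in, but do not present the finite-field equivalence as automatic.
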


\begin{proof}
We compare the two compositions $T_{h,W}Q_{A,W}$ and $Q_{A,W}T_{h,W}$ by evaluating their actions on the polynomial $\frac{p_{n}}{w_{n}}$. The first composition gives \[T_{h,W}Q_{A,W}\bigg(\frac{p_{n}}{w_{n}}\bigg)(x)=T_{h,W}\bigg(\frac{p_{n-1}}{w_{n-1}}\bigg)(x)=
\sum_{k=0}^{n-1}\frac{d_{n-1,k,A,W}(h)}{w_{n-1-k}}\cdot\frac{p_{k}(x)}{w_{k}},\]  while the other one yields \[\sum_{l=0}^{n}\frac{d_{n,l,A,W}(h)}{w_{n-l}}Q_{A,W}\bigg(\frac{p_{l}}{w_{l}}\bigg)(x)
\stackrel{l=k+1}{=}\sum_{k=0}^{n-1}\frac{d_{n,k+1,A,W}(h)}{w_{n-k-1}}\cdot\frac{p_{k}(x)}{w_{k}}\] (where we may omit the term with $l=0$ and $k=-1$ since $Q_{A,W}$ annihilates it). Since $\big\{\frac{p_{k}(x)}{w_{k}}\big\}_{k\in\mathbb{N}}$ is a basis for $\mathbb{F}[x]$, the two sides coincide if and only if the equality $d_{n-1,k,A,W}=d_{n,k+1,A,W}$ holds for every $k<n$. This proves the lemma.
\end{proof}

In the language of the elements of $\mathbb{F}[x]\ldbrack y \rdbrack\subseteq\mathbb{F}\ldbrack x,y \rdbrack$ from above, Lemma \ref{Shefbin} yields
\begin{prop}
The sequence $\{p_{n}(x)\}_{n\in\mathbb{N}}$ is $W$-Sheffer if and only if the
operator $T_{h,W}$ multiplies the corresponding element of
$\mathbb{F}[x]\ldbrack y \rdbrack$ by a power series in $y$ (and $h$),
independently of $x$, for every $h\in\mathbb{F}$. \label{Shefmul}
\end{prop}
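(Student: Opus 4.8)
The plan is to translate the combinatorial condition from Lemma~\ref{Shefbin} directly into the language of the two-variable generating function, using the explicit description of how $T_{h,W}$ acts on the basis $\big\{\frac{p_k}{w_k}\big\}$. First I would write $F(x,y)=\sum_n p_n(x)\frac{y^n}{w_n}$ and apply $T_{h,W}$ term by term; since $T_{h,W}$ is linear and acts only on the $x$-variable, using the expansion $T_{h,W}\big(\frac{p_n}{w_n}\big)=\sum_{k=0}^{n}\frac{d_{n,k,A,W}(h)}{w_{n-k}}\cdot\frac{p_k}{w_k}$ I get a double sum which, upon collecting the coefficient of $\frac{p_k(x)}{w_k}$, reads $\sum_k \frac{p_k(x)}{w_k}\sum_{n\geq k}\frac{d_{n,k,A,W}(h)}{w_{n-k}}y^n$.

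Next I would handle the two directions. For the ``if'' direction, suppose the sequence is $W$-Sheffer, so by Lemma~\ref{Shefbin} there are polynomials $e_j(t)\in\mathbb{F}[t]$ of degree $j$ with $d_{n,k,A,W}=e_{n-k}$ depending only on $j=n-k$. Substituting $n=k+j$ and reindexing, the inner sum becomes $\sum_{j\geq0}\frac{e_j(h)}{w_j}y^{k+j} = y^k\cdot E(y,h)$ where $E(y,h):=\sum_{j\geq0}\frac{e_j(h)}{w_j}y^j$ is a power series in $y$ (with coefficients polynomial in $h$) that is manifestly independent of $k$ and of $x$. Hence $T_{h,W}(F)=E(y,h)\sum_k \frac{p_k(x)}{w_k}y^k = E(y,h)\,F(x,y)$, which is the desired conclusion. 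For the ``only if'' direction, suppose $T_{h,W}(F)=G(y,h)\,F(x,y)$ for some power series $G$ independent of $x$; writing $G(y,h)=\sum_j g_j(h)y^j$ and matching the coefficient of $\frac{p_k(x)}{w_k}y^n$ on both sides (legitimate since the $\frac{p_k}{w_k}$ form a basis of $\mathbb{F}[x]$ and the powers of $y$ are independent), I obtain $\frac{d_{n,k,A,W}(h)}{w_{n-k}} = g_{n-k}(h)$, so $d_{n,k,A,W}$ indeed depends only on $n-k$; one checks the degree count $\deg e_{n-k}=n-k$ is automatic from the degree statement preceding the lemma. By Lemma~\ref{Shefbin} the sequence is $W$-Sheffer.

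I expect the only delicate point to be bookkeeping rather than genuine difficulty: one must be careful that the rearrangement of the double sum $\sum_n\sum_{k\le n}$ into $\sum_k\sum_{n\ge k}$ is valid in $\mathbb{F}\ldbrack x,y\rdbrack$ (it is, because for each fixed power $y^n$ only finitely many terms contribute, the matrix being lower triangular), and that ``independent of $x$'' is correctly read off, since a priori $G$ could depend on $x$ through the quotient $T_{h,W}(F)/F$ --- the content of the statement is precisely that it does not. Apart from this indexing care, the argument is a direct unwinding of Lemma~\ref{Shefbin}, so no substantial obstacle arises.
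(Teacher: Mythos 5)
Your proposal is correct and follows essentially the same route as the paper: expand $T_{h,W}$ on $\sum_n p_n(x)\frac{y^n}{w_n}$ via the coefficients $d_{n,k,A,W}$, reindex by $n=k+l$, and use Lemma \ref{Shefbin} together with the linear independence of the elements $\frac{p_k(x)}{w_k}y^m$ in $\mathbb{F}\ldbrack x,y\rdbrack$ to match coefficients in both directions. The only differences are cosmetic (your multiplier is normalized as $\sum_j g_j(h)y^j$ rather than $\sum_l d_l(h)\frac{y^l}{w_l}$), so nothing further is needed.
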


\begin{proof}
Multiplication by the usual row vector, $T_{h,W}\big(\sum_{n=0}^{\infty}p_{n}\frac{y^{n}}{w_{n}}\big)(x)$ is seen to equal
\[\sum_{n=0}^{\infty}\sum_{k=0}^{n}y^{n}\frac{d_{n,k,A,W}(h)}{w_{n-k}}\cdot\frac{p_{k}(x)}{w_{k}}
\stackrel{n=k+l}{=}\sum_{k=0}^{\infty}\sum_{l=0}^{\infty}\frac{d_{k+l,k,A,W}(h)y^{l}}{w_{l}}\cdot\frac{p_{k}(x)y^{k}}{w_{k}}.\] If the polynomial $d_{n,k,A,W}$ (of degree $n-k$) is a polynomial $d_{n-k}$, depending only on the difference $n-k$, then the sum over $l$ yields the element $\sum_{l=0}^{\infty}d_{l}(h)\frac{y^{l}}{w_{l}}$ of $\mathbb{F}[h]\ldbrack y \rdbrack\subseteq\mathbb{F}\ldbrack h,y \rdbrack$, independently of $k$. Conversely, if $T_{h,W}$ multiplies our expression by some element of $\mathbb{F}\ldbrack h,y \rdbrack$, which we write as $\sum_{l=0}^{\infty}\frac{d_{l}(h)y^{l}}{w_{l}}$, then we compare this product with the expression written above. The linear independence of the the expressions $\frac{p_{k}(x)}{w_{k}}y^{m}$ with $k$ and $m$ from $\mathbb{N}$ shows that $d_{k+l,k,A,W}(h)$ must coincide with $d_{l}(h)$, depending only on $l$, for every $k$ and $l$. The assertion now follows from Lemma \ref{Shefbin}. This completes the proof of the proposition.
\end{proof}

We can now relate Definitions \ref{Riordan} and \ref{Sheffer} in
\begin{thm}
Let a weight $W=\sum_{n=0}^{\infty}w_{n}t^{n}$ and a graded sequence of polynomials $\{p_{n}(x)\}_{n\in\mathbb{N}}$, with the associated matrix $A \in L$, be given. Then the sequence $\{p_{n}(x)\}_{n\in\mathbb{N}}$ is $W$-Sheffer if and only if the matrix $A$ lies in $R_{W}$. \label{comp}
\end{thm}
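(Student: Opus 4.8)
The plan is to characterize the $W$-Sheffer property through the generating function $\sum_{n}p_{n}(x)\frac{y^{n}}{w_{n}}$ and match it against the characterization of $R_W$ given in Proposition \ref{genfunc}. By Proposition \ref{Shefmul}, the sequence is $W$-Sheffer if and only if $T_{h,W}$ multiplies this generating function by a power series $G_h(y)$ in $y$ (depending on $h$), independently of $x$. The first step is therefore to extract what this multiplicative relation says. Writing $F(x,y)=\sum_{k}C_{A_k,W}(y)x^k$ for the element of $\mathbb{F}[x]\ldbrack y\rdbrack$, I would compute $T_{h,W}F$ directly from the definition of $T_{h,W}$: since $T_{h,W}$ sends $\frac{x^n}{w_n}$ to $\sum_{k=0}^n\frac{h^{n-k}}{w_{n-k}}\cdot\frac{x^k}{w_k}$, a reorganization shows $T_{h,W}$ acts on $\mathbb{F}[x]\ldbrack y\rdbrack$ in a way that, on the monomial generating function $W(xy)=\sum_n\frac{x^ny^n}{w_n}$, produces $W(hy)W(xy)$ — i.e. $T_{h,W}$ corresponds to multiplication of the two-variable generating function by $W(hy)$. (This is essentially the computation already carried out in the proof of Proposition \ref{Shefmul}, specialized to $p_n = x^n$.)

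With that in hand, the second step is to transport this to a general sequence. If $A\in R_W$ corresponds to the pair $(\alpha,\beta)$, then by Proposition \ref{genfunc} we have $F(x,y)=\alpha(y)W\big(x\beta(y)\big)$. Applying $T_{h,W}$ and using the monomial computation (with $x\beta(y)$ in place of $x$, legitimate since $v(\beta)=1$) gives $T_{h,W}F = \alpha(y)W\big(h\beta(y)\big)W\big(x\beta(y)\big) = W\big(h\beta(y)\big)\cdot F(x,y)$, which is exactly multiplication by the $x$-independent power series $W\big(h\beta(y)\big)\in\mathbb{F}[h]\ldbrack y\rdbrack$. By Proposition \ref{Shefmul} this proves that $A\in R_W$ implies the sequence is $W$-Sheffer.

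For the converse, suppose the sequence is $W$-Sheffer, so $T_{h,W}F = G_h(y)F$ for every $h\in\mathbb{F}$ and some $G_h\in\mathbb{F}\ldbrack y\rdbrack$ (and one checks $v(G_h)=0$ by comparing lowest-order terms, since $F$ has $x^0$-coefficient $\alpha=C_{A_0}$ of valuation $0$ and $T_{h,W}$ fixes that coefficient's leading behavior up to a unit). I would then read off column relations: comparing the coefficient of $x^k$ on both sides of $T_{h,W}F = G_h F$ expresses $C_{A_{k},W}$ in terms of the $C_{A_{j},W}$ for $j\ge k$ with coefficients built from the $d$-polynomials of Lemma \ref{Shefbin}, which by the Sheffer hypothesis depend only on $n-k$. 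The cleanest route is to argue that the $x$-translation structure forces $F(x,y)$ to satisfy a functional equation under $h\mapsto$ composition that pins down $\beta(y)$ as $\frac{w_{k+1}C_{k+1}}{w_k C_k}$ independently of $k$: indeed the multiplier $G_h$ must be multiplicative in an appropriate sense in $h$ (from $T_{h_1,W}T_{h_2,W}=T_{h_1+h_2,W}$, which follows from the definition of the translations), and extracting the linear-in-$h$ part of $\log$-type relations among the $C_k$ yields the geometric-progression condition of Definition \ref{Riordan}. Then $A\in R_W$ by definition.

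I expect the main obstacle to be the converse direction, specifically making rigorous the passage from "the multiplier $G_h(y)$ is $x$-independent for all scalars $h$" to "the columns $C_{A_k,W}$ form a geometric progression up to the weights," without assuming characteristic $0$ (so one cannot simply differentiate in $h$). The workaround is to avoid derivatives entirely: use the semigroup law $T_{h_1,W}T_{h_2,W}=T_{h_1+h_2,W}$ to get $G_{h_1}(y)G_{h_2}(y)=G_{h_1+h_2}(y)$, combine with the explicit formula $T_{h,W}W(x\theta(y)) = W(h\theta(y))\cdot(\cdots)$ type identities obtained from the monomial case, and compare coefficients of $x^k$ and $x^{k+1}$ for two well-chosen values of $h$ to deduce $w_k^2 C_k^2 = w_{k-1}C_{k-1}w_{k+1}C_{k+1}$ directly. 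Alternatively, and perhaps more transparently, one invokes Lemma \ref{Shefbin}: the coefficient of $\frac{p_k(x)}{w_k}y^{k+l}$ in $T_{h,W}F$ equals $\frac{d_l(h)}{w_l}$ times the coefficient of $\frac{p_k(x)}{w_k}y^k$, so writing everything back in terms of $C_k = C_{A_k,W}$ via $F = \sum_k C_k(y)x^k$ and using that $\{d_l\}$ are the \emph{same} polynomials for every starting index $k$ gives the recursion $w_{k+l}C_{k+l}$-versus-$w_k C_k$ relations that force $C_k = \alpha\beta^k/w_k$. Everything else is routine bookkeeping with the two rings $\mathbb{F}[x]\ldbrack y\rdbrack$ and $\mathbb{F}\ldbrack x,y\rdbrack$ already set up in Section \ref{RArray}.
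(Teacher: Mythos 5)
Your forward direction is correct and is essentially the paper's own computation: apply $T_{h,W}$ term by term to $\alpha(y)W\big(x\beta(y)\big)$ and read off the $x$-independent multiplier $W\big(h\beta(y)\big)$. The converse, however, has a genuine gap. Your ``cleanest route'' rests on the semigroup law $T_{h_1,W}T_{h_2,W}=T_{h_1+h_2,W}$ (and its consequence $G_{h_1}G_{h_2}=G_{h_1+h_2}$), which you assert ``follows from the definition of the translations.'' It does not: by Proposition \ref{DWThW} one has $T_{h,W}=W(hD_W)$, so the composition corresponds to the product $W(h_1t)W(h_2t)$, and this equals $W\big((h_1+h_2)t\big)$ for all $h_1,h_2$ only when $\frac{1}{w_kw_j}=\binom{k+j}{k}\frac{1}{w_{k+j}}$, i.e.\ essentially for the exponential weight; it already fails for $w_n=\lambda^n$. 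Indeed the paper points out in Section \ref{ExRel} that the exponential example is \emph{the only} case where $T_{g,W}\circ T_{h,W}=T_{g+h,W}$. Your fallback, ``invoke Lemma \ref{Shefbin} \ldots\ gives the recursion \ldots\ that force $C_k=\alpha\beta^k/w_k$,'' states exactly the conclusion that needs proving without saying how to extract it, so as written the converse is not established.

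The missing step is short and needs no differentiation, so your worry about characteristic $p$ is a red herring. By Proposition \ref{Shefmul} (whose proof identifies the multiplier) the Sheffer hypothesis gives $T_{h,W}F=\big(\sum_l d_l(h)\frac{y^l}{w_l}\big)F$ with $d_l\in\mathbb{F}[h]$, so both sides are polynomial in $h$. Comparing the coefficient of $x^k$ yields $\sum_{m\geq0}\frac{h^m}{w_m}\cdot\frac{w_{k+m}C_{k+m}(y)}{w_k}=\big(\sum_l d_l(h)\frac{y^l}{w_l}\big)C_k(y)$, and extracting the coefficient of $h^1$ (a formal operation, valid in any characteristic) gives $\frac{w_{k+1}}{w_1w_k}C_{k+1}(y)=\frac{\beta(y)}{w_1}C_k(y)$, where $\frac{\beta(y)}{w_1}$ collects the $h$-linear coefficients of the $d_l$ and is therefore the same series for every $k$. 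Hence $w_{k+1}C_{k+1}=\beta\,w_kC_k$ for all $k$, which gives the condition of Definition \ref{Riordan}, i.e.\ $A\in R_W$. This is precisely how the paper closes the converse (it writes the multiplier as $\gamma(y)+\frac{h}{w_1}\beta(y)+O(h^2)$ and compares the $h^0$ and $h^1$ terms). So your outline is repairable along the lines you began with, but the two finishes you actually propose are, respectively, based on a false identity and left unproved at the decisive point.
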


\begin{proof}
If $A \in R_{W}$ then Proposition \ref{genfunc} allows us to write its associated element of $\mathbb{F}[x]\ldbrack y \rdbrack$ as $\alpha(y)W\big(x\beta(y)\big)$, where $v(\alpha)=0$ and $v(\beta)=1$.
Expanding $W$, the image under $T_{h,W}$ is evaluated as \[\alpha(y)\sum_{n=0}^{\infty}\beta(y)^{n}\sum_{k=0}^{n}\frac{h^{n-k}x^{k}}{w_{n-k}x_{k}}\stackrel{n=k+l}{=}
\alpha(y)\sum_{l=0}^{\infty}\frac{h^{l}\beta(y)^{l}}{w_{l}}\sum_{k=0}^{\infty}\frac{x^{k}\beta(y)^{k}}{w_{k}},\] which is the original expression multiplied by $W\big(h\beta(y)\big)$. This direction thus follows from Proposition \ref{Shefmul}. Conversely, we write this power series using the general formula $\sum_{l=0}^{\infty}C_{A_{l},W}(y)x^{l}$. Applying $T_{h,W}$ yields \[\sum_{l=0}^{\infty}C_{A_{l}}(y)\sum_{k=0}^{l}\frac{w_{l}h^{l-k}x^{k}}{w_{l-k}w_{k}}=
\sum_{k=0}^{\infty}C_{A_{k}}(y)x^{k}+\frac{h}{w_{1}}\sum_{k=0}^{\infty}C_{A_{k+1}}(y)\frac{w_{k+1}x^{k}}{w_{k}}+O(h^{2}),\] where the $O(h^{2})$ means a power series in $x$, $y$, and $h$ which contains only powers of $h$ which are at least 2. Comparing this with the product of the original series $\sum_{k=0}^{\infty}C_{A_{k},W}(y)x^{k}$ multiplied by some element of $\mathbb{F}\ldbrack h,y \rdbrack$, which we write as $\gamma(y)+\frac{h}{w_{1}}\beta(y)+O(h^{2})$ using some elements $\beta$ and $\gamma$ of $\mathbb{F}\ldbrack y \rdbrack$, yields the equality $\gamma(y)=1$ from the constant terms (in $h$), as well as the equality
$\sum_{k=0}^{\infty}C_{A_{k+1}}(y)\frac{w_{k+1}x^{k}}{w_{k}}=\beta(y)\sum_{k=0}^{\infty}C_{A_{k}}(y)x^{k}$ from the terms linear in $h$. But this implies that the quotient $\frac{w_{k+1}C_{k+1}}{w_{k}C_{k}}$ equals $\beta$ for all $k$, so that $A \in R_{W}$ by Definition \ref{Riordan}. This completes the proof of the theorem.
\end{proof}

The proofs of Proposition \ref{Shefmul} and Theorem \ref{comp} also yield the
following
\begin{cor}
Let $\{p_{n}(x)\}_{n\in\mathbb{N}}$ be a Sheffer sequence. Then the parameters  $\alpha$ and $\beta$ of $\mathbb{F}\ldbrack y \rdbrack$ from Proposition \ref{genfunc} can be evaluated as follows. Considering the element
$\sum_{n=0}^{\infty}p_{n}(x)\frac{y^{n}}{w_{n}}$ of $\mathbb{F}\ldbrack x,y \rdbrack$, $\alpha(y)$ is obtained by the substitution $x=0$. Moreover, if we consider the element of $\mathbb{F}\ldbrack x,y \rdbrack$ by which $T_{h,W}$ multiplies this series, then $\beta(y)$ is $w_{1}$ times the term of it which is linear in $h$.
\label{ThWser}
\end{cor}

\begin{proof}
We recall that $\alpha(y)$ is $C_{A_{0},W}(y)$, which is the constant term (in $x$) of our element of $\mathbb{F}\ldbrack x,y \rdbrack$ (in the second presentation). Moreover, the proof of Proposition \ref{Shefmul} shows that the multiplier in question is $W\big(h\beta(y)\big)$. The corollary easily follows from these assertions.
\end{proof}

\smallskip

We now describe the two natural subgroups of $R_{W}$ arising from its structure as a semi-direct product in terms of special Sheffer sequences.
\begin{defn}
A sequence $\{p_{n}(x)\}_{n\in\mathbb{N}}$ of polynomials is called an \emph{Appell sequence of weight $W$}, or a \emph{$W$-Appell sequence}, if its associated operator $Q_{A,W}$ is $D_{W}$. The sequence is said to be \emph{of $W$-binomial type} in case evaluating the image of $\frac{p_{n}}{w_{n}}$ under $T_{h,W}$ at $x$ yields $\sum_{k=0}^{n}\frac{p_{n-k}(h)p_{k}(x)}{w_{n-k}w_{k}}$. \label{Appbin}
\end{defn}

The connection of Definition \ref{Appbin} with the previous notions is given in
\begin{prop}
A sequence $\{p_{n}(x)\}_{n\in\mathbb{N}}$ is $W$-Appell if and only if it is $W$-Sheffer and the polynomials $d_{n,k,A,W}(h)$ are just $h^{n-k}$, i.e., its $\beta$-parameter is trivial. \label{Appbeta1}
\end{prop}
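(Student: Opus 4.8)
The plan is to compare the operators $Q_{A,W}$ and $D_{W}$ by examining how each acts on the bivariate generating series $G(x,y):=\sum_{n}p_{n}(x)\frac{y^{n}}{w_{n}}\in\mathbb{F}[x]\ldbrack y\rdbrack$. Two short computations set this up. Since $Q_{A,W}$ is characterized by $Q_{A,W}(p_{n})=\frac{w_{n}}{w_{n-1}}p_{n-1}$ (with $Q_{A,W}(p_{0})=0$), applying it to each polynomial coefficient of $G$ gives $\sum_{n}Q_{A,W}(p_{n})(x)\frac{y^{n}}{w_{n}}=\sum_{n\geq1}p_{n-1}(x)\frac{y^{n}}{w_{n-1}}=y\,G(x,y)$. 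On the other hand $D_{W}(x^{k})=\frac{w_{k}}{w_{k-1}}x^{k-1}$, and because $D_{W}$ is $\mathbb{F}$-linear we have $\sum_{n}D_{W}(p_{n})(x)\frac{y^{n}}{w_{n}}=\sum_{k}C_{A_{k},W}(y)\,D_{W}(x^{k})$; when $A\in R_{W}$, so that $C_{A_{k},W}=\alpha\frac{\beta^{k}}{w_{k}}$ and $G=\alpha(y)W\big(x\beta(y)\big)$ by Proposition \ref{genfunc}, the same reindexing applied to $W\big(x\beta(y)\big)=\sum_{k}\beta(y)^{k}\frac{x^{k}}{w_{k}}$ yields $\sum_{n}D_{W}(p_{n})(x)\frac{y^{n}}{w_{n}}=\beta(y)\,G(x,y)$.

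Next I would observe that both directions reduce to the case $A\in R_{W}$. If the sequence is $W$-Appell then $Q_{A,W}=D_{W}$, and $D_{W}$ commutes with every $W$-translation — this is precisely the assertion that the monomial sequence, whose matrix is the identity, is $W$-Sheffer, which follows from Theorem \ref{comp} because the identity lies in $R_{W}$ (its column series are $\frac{y^{k}}{w_{k}}$, which trivially satisfy Definition \ref{Riordan}), or from a direct computation. Hence a $W$-Appell sequence is $W$-Sheffer, so $A\in R_{W}$ by Theorem \ref{comp}; and conversely a $W$-Sheffer sequence has $A\in R_{W}$ by the same theorem. Moreover the two conditions ``$\beta$-parameter trivial'' and ``$d_{n,k,A,W}(h)=h^{n-k}$'' are equivalent: Corollary \ref{ThWser} gives $\sum_{l}d_{l}(h)\frac{y^{l}}{w_{l}}=W\big(h\beta(y)\big)$, and since $W(t)=1+t+\cdots$, one direction reads off the coefficient of $h^{1}$ to get $\beta(y)=y$, while the other computes $W(hy)=\sum_{l}h^{l}\frac{y^{l}}{w_{l}}$. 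So it remains to prove, for $A\in R_{W}$, that $Q_{A,W}=D_{W}$ if and only if $\beta(y)=y$.

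This last equivalence is immediate from the first paragraph: $Q_{A,W}=D_{W}$ as operators on $\mathbb{F}[x]$ holds if and only if $Q_{A,W}(p_{n})=D_{W}(p_{n})$ for every $n$ (since $\{p_{n}(x)\}_{n\in\mathbb{N}}$ is a basis of $\mathbb{F}[x]$), which by comparison of the coefficients of $y^{n}$ is equivalent to $y\,G(x,y)=\beta(y)\,G(x,y)$ in $\mathbb{F}[x]\ldbrack y\rdbrack$; and since this ring is an integral domain (as $\mathbb{F}[x]$ is) and $G\neq0$ — its $y^{0}$-coefficient is $p_{0}(x)=a_{0,0}\neq0$ — this holds exactly when $\beta(y)=y$. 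I do not expect any serious obstacle; the only steps that need a little care are the identification of the two ways of letting $D_{W}$ act on $G$, which rests on $\mathbb{F}$-linearity of $D_{W}$, the reindexing $D_{W}\big(W(x\beta(y))\big)=\beta(y)W\big(x\beta(y)\big)$, and the remark that $G$ is a non-zero-divisor, so that it may be cancelled from $y\,G=\beta(y)\,G$.
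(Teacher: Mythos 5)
Your proof is correct and follows essentially the same route as the paper's: the heart of both arguments is that $Q_{A,W}$ multiplies the series $\sum_{n}p_{n}(x)\frac{y^{n}}{w_{n}}$ by $y$ while $D_{W}$ multiplies it by $\beta(y)$ (via Proposition \ref{genfunc}), combined with Corollary \ref{ThWser} to identify the condition $d_{l}(h)=h^{l}$ with $\beta(y)=y$. The only cosmetic differences are that you obtain Appell $\Rightarrow$ Sheffer from Theorem \ref{comp} applied to the monomial sequence (the identity matrix lies in $R_{W}$) instead of redoing the Lemma \ref{Shefbin} computation for $D_{W}$, and you conclude by cancelling the non-zero series $G$ in the integral domain $\mathbb{F}[x]\ldbrack y \rdbrack$ rather than by the paper's coefficient comparison on the basis $\big\{\frac{x^{k}}{w_{k}}\big\}$.
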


\begin{proof}
The argument proving Lemma \ref{Shefbin} with $p_{n}(x)=x^{n}$ and $Q_{A,W}=D_{W}$ shows that $D_{W}$ and $T_{h,W}$ commute. $W$-Appell sequences are thus $W$-Sheffer. The form of the multiplier from the proof of Proposition \ref{Shefmul} (or Corollary \ref{ThWser}) shows that the conditions $d_{n,k,A,W}(h)=h^{n-k}$ for all $n \geq k$, i.e., $d_{l}(h)=h^{l}$ for every $l$, and $\beta(y)=y$, are equivalent. Now, Proposition \ref{genfunc} and the $W$-Sheffer condition yield the equality
$\sum_{n=0}^{\infty}p_{n}(x)\frac{y^{n}}{w_{n}}=\alpha(y)\sum_{k=0}^{\infty}\frac{\beta(y)^{k}x^{k}}{w_{k}}$. Moreover, the usual linear independence argument shows that two operators on $\mathbb{F}[x]$ are the same if and only if the images of the above expression under them coincide. Now, the definitions of $D_{W}$ and $Q_{A,W}$ show that the latter operator multiplies the left hand side by $y$, while the former one multiplies the right hand side by $\beta(y)$. Hence if $Q_{A,W}=D_{W}$ then $\beta(y)=y$, and conversely if $\beta(y)=y$ then the linear independence argument implies that $Q_{A,W}=D_{W}$. This proves the proposition.
\end{proof}

This connection continues with
\begin{prop}
A sequence $\{p_{n}(x)\}_{n\in\mathbb{N}}$ is of $W$-binomial type if and only if it is $W$-Sheffer and satisfies $p_{n}(0)=\delta_{n,0}$, i.e., if its $\alpha$-parameter is trivial. \label{binalpha1}
\end{prop}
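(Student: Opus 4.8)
The plan is to run the argument dual to the proof of Proposition~\ref{Appbeta1}: there one pins down the $\beta$-parameter, here one pins down the $\alpha$-parameter. Throughout, write $F(x,y)=\sum_{n}p_{n}(x)\frac{y^{n}}{w_{n}}\in\mathbb{F}[x]\ldbrack y\rdbrack$ for the generating function of the sequence, and note that $F(h,y)=\sum_{l}p_{l}(h)\frac{y^{l}}{w_{l}}$ is nothing but $F$ with the indeterminate $x$ specialized to the scalar $h$.

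The first step is the reformulation: the sequence is of $W$-binomial type if and only if, for every $h\in\mathbb{F}$, the operator $T_{h,W}$ multiplies $F(x,y)$ by $F(h,y)$. This is immediate: since $T_{h,W}$ is $\mathbb{F}\ldbrack y\rdbrack$-linear, the coefficient of $y^{n}$ in $T_{h,W}F(x,y)$ is $T_{h,W}\big(\frac{p_{n}}{w_{n}}\big)(x)$, while the coefficient of $y^{n}$ in $F(h,y)F(x,y)$ is $\sum_{k=0}^{n}\frac{p_{n-k}(h)p_{k}(x)}{w_{n-k}w_{k}}$; comparing these two (both elements of $\mathbb{F}[x,h]$) for all $n$ is exactly Definition~\ref{Appbin}.

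For the ``only if'' direction I would then argue as follows. If the sequence is of $W$-binomial type, then by the first step $T_{h,W}$ multiplies $F(x,y)$ by $F(h,y)$, a power series in $y$ and $h$ not involving $x$, so Proposition~\ref{Shefmul} shows the sequence is $W$-Sheffer. Hence $F(x,y)=\alpha(y)W\big(x\beta(y)\big)$ by Proposition~\ref{genfunc}, while by Corollary~\ref{ThWser} (equivalently, by the computation in the proof of Theorem~\ref{comp}) the multiplier effected by $T_{h,W}$ is $W\big(h\beta(y)\big)$. Equating the two expressions for $T_{h,W}F$ gives $F(h,y)F(x,y)=W\big(h\beta(y)\big)F(x,y)$ in the integral domain $\mathbb{F}\ldbrack x,y\rdbrack$; since $F$ is nonzero we may cancel it, obtaining $F(h,y)=W\big(h\beta(y)\big)$, i.e.\ $\alpha(y)W\big(h\beta(y)\big)=W\big(h\beta(y)\big)$. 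The power series $W\big(h\beta(y)\big)$ has $y$-constant term $W(0)=1$ and is therefore a unit, so $\alpha(y)=1$; by the first assertion of Corollary~\ref{ThWser} this is exactly the condition $p_{n}(0)=\delta_{n,0}$ for all $n$.

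For the converse, assume the sequence is $W$-Sheffer with $p_{n}(0)=\delta_{n,0}$. Then $\alpha(y)=\sum_{n}p_{n}(0)\frac{y^{n}}{w_{n}}=1$ by Corollary~\ref{ThWser}, so $F(x,y)=W\big(x\beta(y)\big)$ by Proposition~\ref{genfunc} and in particular $F(h,y)=W\big(h\beta(y)\big)$. As in the proof of Theorem~\ref{comp}, $T_{h,W}$ multiplies $W\big(x\beta(y)\big)$ by $W\big(h\beta(y)\big)$, so $T_{h,W}F(x,y)=W\big(h\beta(y)\big)F(x,y)=F(h,y)F(x,y)$, and the first step yields that the sequence is of $W$-binomial type. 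The only point requiring care is the bookkeeping with the weights $w_{\bullet}$ in the first step; after that the argument is just cancellation of a unit in $\mathbb{F}\ldbrack x,y\rdbrack$, perfectly parallel to how Proposition~\ref{Appbeta1} is proved.
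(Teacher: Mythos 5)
Your proof is correct, and it stays within the paper's own toolbox (Proposition \ref{Shefmul}, Proposition \ref{genfunc}, Corollary \ref{ThWser}); your converse direction is in substance the same as the paper's, which also identifies the translation multiplier $W\big(h\beta(y)\big)$ with the $x=h$ specialization of $W\big(x\beta(y)\big)$ to conclude $d_{n}=p_{n}$. The forward direction is where you diverge slightly: the paper gets Sheffer-ness from Lemma \ref{Shefbin} (binomiality says $d_{n,k,A,W}=p_{n-k}$ depends only on $n-k$) and obtains $p_{n}(0)=\delta_{n,0}$ by the very elementary device of setting $h=0$, using that $T_{0,W}$ is the identity and that the $p_{k}$ are linearly independent; you instead pass through the $(\alpha,\beta)$-parametrization, equate the two expressions $F(h,y)$ and $W\big(h\beta(y)\big)$ for the multiplier by cancelling the nonzero series $F$ in the domain $\mathbb{F}\ldbrack x,y\rdbrack$, and then use that $W\big(h\beta(y)\big)$ is a unit to force $\alpha=1$. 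Both arguments are sound; the paper's $h=0$ trick is more elementary and self-contained, while your cancellation argument is slicker and makes the parallel with Proposition \ref{Appbeta1} (trivial $\beta$) explicit, at the cost of invoking Corollary \ref{ThWser} already in the forward direction.
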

The symbol $\delta_{n,0}$ here is the Kronecker delta symbol, which equals 1 if $n=0$ and 0 otherwise.

\begin{proof}
The condition for $W$-binomiality can be rephrased as the assertion that $d_{n,k,A,W}=p_{n-k}$ for every $n \geq k$. Hence $W$-binomial sequences are $W$-Sheffer by Lemma \ref{Shefbin}. Corollary \ref{ThWser} shows the equivalence of $\alpha(y)=1$ and the condition $p_{n}(0)=\delta_{n,0}$ for all $n$. As $T_{0,W}$ is the identity map and the $p_{k}$ are linearly independent, substituting $h=0$ in Definition \ref{Appbin} shows that $W$-binomial sequences must satisfy the condition $p_{n}(0)=\delta_{n,0}$. Conversely, if $\{p_{n}(x)\}_{n\in\mathbb{N}}$ is a $W$-Sheffer sequence with a
trivial $\alpha$-parameter, then Proposition \ref{genfunc} shows that $\sum_{n=0}^{\infty}p_{n}(x)\frac{y^{n}}{w_{n}}$ equals
$W\big(x\beta(y)\big)$ for some $\beta\in\mathbb{F}\ldbrack y \rdbrack$ with
$v(\beta)=1$. On the other hand, The proof of Proposition \ref{Shefmul} expands the multiplier $W\big(h\beta(y)\big)$ as $\sum_{n=0}^{\infty}d_{n}(h)\frac{y^{n}}{w_{n}}$. This implies the the equality $d_{n}=p_{n}$ for every $n$, which completes the proof of the proposition.
\end{proof}

\smallskip

We have described several notions in terms of the behavior with respect to $T_{h,W}$. Many of them could have equivalently been defined using only $D_{W}$, as one sees from the following
\begin{prop}
Any operator on $\mathbb{F}[x]$ commutes with $D_{W}$ if and only if it commutes with all the $W$-translations $T_{h,W}$. \label{DWThW}
\end{prop}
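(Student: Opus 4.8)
The plan is to prove both implications. One direction is immediate: since $T_{0,W}$ is the identity and, more importantly, since $D_W$ arises as the linear coefficient in the expansion of $T_{h,W}$ in powers of $h$, any operator commuting with every $T_{h,W}$ automatically commutes with $D_W$. To make this precise, I would recall from the definition of $T_{h,W}$ that $T_{h,W}\big(\tfrac{x^n}{w_n}\big)=\sum_{k=0}^{n}\tfrac{h^{n-k}}{w_{n-k}}\cdot\tfrac{x^k}{w_k}$, so the coefficient of $h^1$ in this expression is $\tfrac{x^{n-1}}{w_{n-1}}=D_W\big(\tfrac{x^n}{w_n}\big)$. Since composition with a fixed operator $S$ on $\mathbb{F}[x]$ is linear and acts coefficientwise on such polynomial-in-$h$ expressions, $S T_{h,W}=T_{h,W}S$ for all $h\in\mathbb{F}$ forces equality of the $h^1$-coefficients, giving $S D_W=D_W S$.

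For the converse, the key observation is that each $T_{h,W}$ is a ``formal exponential'' of $D_W$: one checks directly on the basis $\big\{\tfrac{x^n}{w_n}\big\}$ that $D_W^{\,j}\big(\tfrac{x^n}{w_n}\big)=\tfrac{x^{n-j}}{w_{n-j}}$, whence $\sum_{j\geq0}\tfrac{h^j}{w_j}D_W^{\,j}\big(\tfrac{x^n}{w_n}\big)=\sum_{j=0}^{n}\tfrac{h^j}{w_j}\cdot\tfrac{x^{n-j}}{w_{n-j}}=T_{h,W}\big(\tfrac{x^n}{w_n}\big)$. Thus, writing $W$ for the chosen weight series, we have the operator identity $T_{h,W}=\sum_{j\geq0}\tfrac{h^j}{w_j}D_W^{\,j}$, where the sum is finite when applied to any fixed polynomial, so it is a well-defined operator on $\mathbb{F}[x]$. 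Now if $S$ commutes with $D_W$, then $S$ commutes with every power $D_W^{\,j}$, hence with every finite (or locally finite) $\mathbb{F}$-linear combination of them; applying this to the above expression gives $S T_{h,W}=T_{h,W}S$.

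The main point requiring a little care — and the only place where anything could go wrong — is the justification that commutation with $D_W$ passes to the infinite sum $\sum_{j\geq0}\tfrac{h^j}{w_j}D_W^{\,j}$. The resolution is that this is not really an infinite sum as an operator: evaluated at $\tfrac{x^n}{w_n}$, all terms with $j>n$ vanish, so on each fixed polynomial $p\in\mathbb{F}[x]$ only finitely many terms contribute, and $S(T_{h,W}p)=\sum_{j=0}^{\deg p}\tfrac{h^j}{w_j}S(D_W^{\,j}p)=\sum_{j=0}^{\deg p}\tfrac{h^j}{w_j}D_W^{\,j}(Sp)$ since $\deg(D_W^{\,j}p)\le\deg p$ and $S$ is linear; the last expression is $T_{h,W}(Sp)$ because $\deg(Sp)$ may exceed $\deg p$ only by finitely much and the added terms are zero anyway. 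So no convergence issue arises, and the two directions together establish the proposition. I would present this as a short two-paragraph argument, the first handling the ``formal exponential'' identity and the converse, the second extracting the $h^1$-coefficient for the forward direction.
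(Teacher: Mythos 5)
Your proof is correct and follows essentially the same route as the paper: writing $T_{h,W}=W(hD_{W})=\sum_{j}\frac{h^{j}}{w_{j}}D_{W}^{\,j}$ as a locally finite sum of powers of $D_{W}$ to pass commutation with $D_{W}$ to all the $W$-translations, and extracting the coefficient of $h$ for the converse. The only slip is cosmetic: the $h^{1}$-coefficient of $T_{h,W}\big(\frac{x^{n}}{w_{n}}\big)$ is $\frac{1}{w_{1}}\cdot\frac{x^{n-1}}{w_{n-1}}=\frac{1}{w_{1}}D_{W}\big(\frac{x^{n}}{w_{n}}\big)$ rather than $D_{W}\big(\frac{x^{n}}{w_{n}}\big)$, but since $w_{1}\neq0$ this nonzero scalar does not affect the conclusion.
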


\begin{proof}
The definitions of $T_{h,W}$ and $D_{W}$ show that $T_{h,W}\big(\frac{x^{n}}{w_{n}}\big)$ can be written as $\sum_{l=0}^{\infty}\frac{h^{l}}{w_{l}}D_{W}^{l}\big(\frac{x^{n}}{w_{n}}\big)$. Hence we can write $T_{h,W}=\sum_{l=0}^{\infty}\frac{h^{l}}{w_{l}}D_{W}^{l}=W(hD_{W})$ as operators on $\mathbb{F}[x]$ (this is well-defined on $\mathbb{F}[x]$ since any
polynomial is annihilated by a high enough power of $D_{W}$). Hence if an operator $Z$ on $\mathbb{F}[x]$ commutes with $D_{W}$ then it has to commute also with $T_{h,W}$ for any $h$. Conversely, if $ZT_{h,W}=T_{h,W}Z$ then expanding $T_{h,W}$ as above and comparing the terms which are linear in $h$ yields the desired equality $ZD_{W}=D_{W}Z$. This proves the proposition.
\end{proof}

The above arguments prove, via Proposition \ref{DWThW}, that the sequence of polynomials $\{p_{n}(x)\}_{n\in\mathbb{N}}$ corresponding to the matrix $A \in L$ is $W$-Sheffer if and only if the associated operator $Q_{A,W}$ commutes with $D_{W}$. Writing $D_{W}\big(\frac{p_{n}}{w_{n}}\big)(x)$ as $\sum_{k=0}^{n}\frac{\tilde{d}_{n,k,A,W}}{w_{n-k}}\cdot\frac{p_{k}(x)}{w_{k}}$ with $\tilde{d}_{n,k,A,W}\in\mathbb{F}$, we see that $\{p_{n}(x)\}_{n\in\mathbb{N}}$ is $W$-Sheffer if and only if $\tilde{d}_{n,k,A,W}$ depends only on $n-k$ (hence can be written as $\tilde{d}_{n-k}$). It is also equivalent to $D_{W}$ multiplying
$\sum_{n=0}^{\infty}p_{n}(x)\frac{y^{n}}{w_{n}}$ by a power series in $y$, which is then $\sum_{l=0}^{\infty}\tilde{d}_{l}\frac{y^{l}}{w_{l}}=\beta(y)$.

\section{Linear Functionals on Polynomials \label{LinFunc}}

We now consider the space $\mathbb{F}[x]^{*}$ of linear functionals on $\mathbb{F}[x]$. Fix a power series $W(t)=\sum_{n=0}^{\infty}\frac{t^{n}}{w_{n}}\in\mathbb{F}\ldbrack t \rdbrack$ as above.
\begin{lem}
The bilinear map taking two elements $\varphi$ and $\psi$ of $\mathbb{F}[x]^{*}$ to the linear functional defined by $\varphi\cdot_{W}\psi\big(\frac{x^{n}}{w_{n}}\big)=\sum_{k=0}^{n}\varphi\big(\frac{x^{k}}{w_{k}}\big)\psi\big(\frac{x^{n-k}}{w_{n-k}}\big)$ defines a product on $\mathbb{F}[x]^{*}$, making it a ring which is isomorphic to $\mathbb{F}\ldbrack y \rdbrack$. \label{Fx*ring}
\end{lem}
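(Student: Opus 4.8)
The plan is to identify $\mathbb{F}[x]^{*}$ with $\mathbb{F}\ldbrack y \rdbrack$ via the coefficient map and check that the product $\cdot_{W}$ matches multiplication of power series. Concretely, to a functional $\varphi\in\mathbb{F}[x]^{*}$ associate the power series $\Phi(y)=\sum_{n}\varphi\big(\tfrac{x^{n}}{w_{n}}\big)y^{n}\in\mathbb{F}\ldbrack y \rdbrack$. This is clearly a bijection: the coefficients of a power series can be prescribed arbitrarily, and a linear functional on $\mathbb{F}[x]$ is determined freely by its values on the basis $\big\{\tfrac{x^{n}}{w_{n}}\big\}_{n\in\mathbb{N}}$. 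It is additive, and the key computation is that it is multiplicative for $\cdot_{W}$: if $\Phi$ and $\Psi$ correspond to $\varphi$ and $\psi$, then the coefficient of $y^{n}$ in $\Phi(y)\Psi(y)$ is $\sum_{k=0}^{n}\varphi\big(\tfrac{x^{k}}{w_{k}}\big)\psi\big(\tfrac{x^{n-k}}{w_{n-k}}\big)$, which is precisely $\varphi\cdot_{W}\psi\big(\tfrac{x^{n}}{w_{n}}\big)$ by definition. Hence the map is a ring isomorphism onto $\mathbb{F}\ldbrack y \rdbrack$ once we know the target side is a ring, which it is.

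The only genuinely substantive point, and the step I expect to require the most care, is to verify that $\cdot_{W}$ is actually a well-defined bilinear product on $\mathbb{F}[x]^{*}$ — i.e., that for fixed $\varphi,\psi$ the prescription $\tfrac{x^{n}}{w_{n}}\mapsto\sum_{k=0}^{n}\varphi\big(\tfrac{x^{k}}{w_{k}}\big)\psi\big(\tfrac{x^{n-k}}{w_{n-k}}\big)$ genuinely extends (by linearity) to an element of $\mathbb{F}[x]^{*}$. But this is immediate: since $\big\{\tfrac{x^{n}}{w_{n}}\big\}$ is a basis of $\mathbb{F}[x]$, any assignment of scalars to these basis vectors defines a unique linear functional. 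Bilinearity in $\varphi$ and $\psi$ is visible from the formula, since each summand is $\mathbb{F}$-linear in $\varphi\big(\tfrac{x^{k}}{w_{k}}\big)$ and in $\psi\big(\tfrac{x^{n-k}}{w_{n-k}}\big)$. So the ``ring'' claim does not need to be checked axiom-by-axiom from the formula; it is cleaner to transport the ring structure of $\mathbb{F}\ldbrack y \rdbrack$ through the bijection and observe that the transported multiplication is exactly $\cdot_{W}$.

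Thus the order of steps is: (1) define the correspondence $\varphi\leftrightarrow\Phi$ and note it is an $\mathbb{F}$-linear bijection $\mathbb{F}[x]^{*}\to\mathbb{F}\ldbrack y \rdbrack$; (2) compute the $y^{n}$-coefficient of a product $\Phi\Psi$ and match it with the defining formula for $\varphi\cdot_{W}\psi$, concluding that $\cdot_{W}$ is well-defined and corresponds to power-series multiplication; (3) conclude that $\cdot_{W}$ inherits associativity, commutativity, and a unit (the functional sending $\tfrac{x^{n}}{w_{n}}$ to $\delta_{n,0}$, i.e. evaluation at $0$ composed with the identification, corresponding to the power series $1$), so $\mathbb{F}[x]^{*}$ with $\cdot_{W}$ is a ring isomorphic to $\mathbb{F}\ldbrack y \rdbrack$. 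No step here presents a real obstacle; the main thing to be careful about is not conflating the element $\tfrac{x^{n}}{w_{n}}$ of the basis with the monomial $x^{n}$, and keeping the weights $w_{n}$ in the right places so that the convolution comes out clean.
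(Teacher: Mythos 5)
Your proposal is correct and follows essentially the same route as the paper: identify $\varphi$ with the power series $\sum_{n}\varphi\big(\frac{x^{n}}{w_{n}}\big)y^{n}$, observe this is a linear bijection since $\big\{\frac{x^{n}}{w_{n}}\big\}$ is a basis, match the $y^{n}$-coefficient of the product with the defining convolution, and transfer the ring axioms from $\mathbb{F}\ldbrack y \rdbrack$. No gaps.
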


\begin{proof}
For any $n\in\mathbb{N}$ we denote $\varphi\big(\frac{x^{n}}{w_{n}}\big)$ by $b_{n}$ and $\psi\big(\frac{x^{n}}{w_{n}}\big)$ by $c_{n}$. We then identify $\varphi$ and $\psi$ with the elements $\sum_{n=0}^{\infty}b_{n}y^{n}$ and $\sum_{n=0}^{\infty}c_{n}y^{n}$ of $\mathbb{F}\ldbrack y \rdbrack$ respectively. The fact that the elements $\frac{x^{n}}{w_{n}}$ are a basis of $\mathbb{F}[x]$ implies that this identification is an isomorphism of vector spaces between
$\mathbb{F}[x]^{*}$ and $\mathbb{F}\ldbrack y \rdbrack$. As the product of the series $\sum_{n=0}^{\infty}b_{n}y^{n}$ and $\sum_{n=0}^{\infty}c_{n}y^{n}$ in $\mathbb{F}\ldbrack y \rdbrack$ equals $\sum_{n=0}^{\infty}\big(\sum_{k=0}^{n}b_{k}c_{n-k}\big)y^{n}$ and $\varphi\cdot_{W}\psi\big(\frac{x^{n}}{w_{n}}\big)=\sum_{k=0}^{n}b_{k}c_{n-k}$, this isomorphism transfers the product from $\mathbb{F}\ldbrack y \rdbrack$ to the operation $\cdot_{W}$ on $\mathbb{F}[x]^{*}$. Carrying the ring axioms from $\mathbb{F}\ldbrack y \rdbrack$ to $\mathbb{F}[x]^{*}$ via this isomorphism, this completes the proof of the lemma.
\end{proof}
A useful interpretation of Lemma \ref{Fx*ring} is the observation that $\varphi\in\mathbb{F}[x]^{*}$ is identified, in this isomorphism, with the element of $\mathbb{F}\ldbrack y \rdbrack$ to which it sends the expression $W(xy)=\sum_{n=0}^{\infty}\frac{x^{n}y^{n}}{w_{n}}$. This element of $\mathbb{F}\ldbrack x,y \rdbrack$ corresponds, via Proposition \ref{genfunc}, to the trivial element of $R_{W}$. Carrying the valuation from $\mathbb{F}\ldbrack y \rdbrack$ to $\mathbb{F}[x]^{*}$, with its multiplicative property, via the isomorphism from Lemma \ref{Fx*ring}, we get the following immediate
\begin{cor}
For $\varphi\in\mathbb{F}[x]^{*}$ define
$v(\varphi)=\min\big\{n\in\mathbb{N}\big|\varphi(x^{n})\neq0\big\}$ (and
$\varphi(0)=\infty$). Then $v$ is multiplicative. \label{vFx*}
\end{cor}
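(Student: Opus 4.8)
The statement to prove is Corollary~\ref{vFx*}: that the function $v$ on $\mathbb{F}[x]^*$ defined by $v(\varphi)=\min\{n\mid\varphi(x^n)\neq0\}$ is multiplicative, i.e.\ $v(\varphi\cdot_W\psi)=v(\varphi)+v(\psi)$.

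\bigskip

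The plan is to simply transport the statement through the isomorphism established in Lemma~\ref{Fx*ring}. First I would recall that, under the identification $\varphi\leftrightarrow\sum_n b_n y^n$ with $b_n=\varphi(x^n/w_n)$, the product $\cdot_W$ on $\mathbb{F}[x]^*$ corresponds to ordinary multiplication of power series in $\mathbb{F}\ldbrack y\rdbrack$. Next I would observe that, since $w_n\neq0$ for all $n$, the condition $\varphi(x^n)\neq0$ is equivalent to $b_n\neq0$; hence $v(\varphi)$ as defined in the corollary coincides with the valuation $v$ of the associated power series $\sum_n b_n y^n$ in the complete discrete valuation ring $\mathbb{F}\ldbrack y\rdbrack$. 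The convention $v(0)=\infty$ matches the convention for the zero power series. Finally, I would invoke the standard fact (already used implicitly in Section~\ref{RArray}) that the valuation on $\mathbb{F}\ldbrack y\rdbrack$ is multiplicative: $v(CD)=v(C)+v(D)$, which holds because $\mathbb{F}$ is a field, so the ring has no zero divisors and the lowest-degree coefficient of a product is the product of the lowest-degree coefficients. Combining these three observations gives $v(\varphi\cdot_W\psi)=v(\varphi)+v(\psi)$ at once.

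\bigskip

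There is essentially no obstacle here; the only point requiring a word of care is the bookkeeping with the weights $w_n$, namely confirming that the ``unweighted'' vanishing condition $\varphi(x^n)\neq0$ used in the statement of the corollary agrees with the ``weighted'' one $\varphi(x^n/w_n)\neq0$ that appears naturally via the isomorphism. Since each $w_n$ is a nonzero scalar, $\varphi(x^n)=w_n\varphi(x^n/w_n)$ shows the two conditions are equivalent, and the argument goes through. If desired one could also note, as an immediate consequence, that $\mathbb{F}[x]^*$ with $\cdot_W$ is an integral domain, and that $\varphi$ is a unit precisely when $v(\varphi)=0$, i.e.\ when $\varphi(1)\neq0$.
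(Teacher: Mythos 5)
Your proof is correct and follows essentially the same route as the paper: transfer $v$ through the isomorphism of Lemma \ref{Fx*ring} (noting, as you do, that the nonzero weights $w_{n}$ make the weighted and unweighted vanishing conditions agree) and then invoke the multiplicativity of the valuation on $\mathbb{F}\ldbrack y \rdbrack$. Your extra remarks on the weight bookkeeping merely spell out what the paper leaves implicit.
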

The number $v(\varphi)$ from Corollary \ref{vFx*} is also the minimal number $n$ such that $\varphi(p)=0$ wherever the degree of $p$ is strictly smaller than $n$. Hence this parameter is independent of $W$.

\smallskip

Let $\varepsilon_{h}\in\mathbb{F}[x]^{*}$ be the functional arising from evaluation at $h$: $\varepsilon_{h}(p)=p(h)$. Then $\varepsilon_{h}$ corresponds to the power series $W(hy)\in\mathbb{F}\ldbrack y \rdbrack$, and in particular $\varepsilon_{0}$ corresponds to 1 and is the identity of $\mathbb{F}[x]^{*}$. The relation between multiplication of linear functionals and operators on polynomials is given in the following
\begin{lem}
Let $S$ be a linear operator on $\mathbb{F}[x]$. Then the operation $\varphi\mapsto\varphi \circ S$ on $\mathbb{F}[x]^{*}$ is obtained through multiplication with some element $\psi\in\mathbb{F}[x]^{*}$ if and only if $S$ does not increase the degrees of polynomials and commutes with $D_{W}$. In this case $\psi$ is uniquely determined as $\varepsilon_{0} \circ S$. For the particular case $S=T_{h,W}$ we get $\psi=\varepsilon_{h}$. \label{opFx*}
\end{lem}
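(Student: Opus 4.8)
The plan is to compute everything in coordinates with respect to the basis $\big\{\frac{x^{n}}{w_{n}}\big\}_{n\in\mathbb{N}}$ of $\mathbb{F}[x]$, using the vector-space isomorphism from Lemma \ref{Fx*ring}. I would write $S\big(\frac{x^{n}}{w_{n}}\big)=\sum_{m}s_{n,m}\frac{x^{m}}{w_{m}}$ with scalars $s_{n,m}\in\mathbb{F}$ (an essentially finite sum, since $S$ produces polynomials), set $b_{m}=\varphi\big(\frac{x^{m}}{w_{m}}\big)$ for a given $\varphi\in\mathbb{F}[x]^{*}$, and set $c_{l}=\psi\big(\frac{x^{l}}{w_{l}}\big)$ for a candidate $\psi\in\mathbb{F}[x]^{*}$ (with the convention $c_{l}=0$ for $l<0$). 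Then $(\varphi\circ S)\big(\frac{x^{n}}{w_{n}}\big)=\sum_{m}s_{n,m}b_{m}$, whereas by definition $(\varphi\cdot_{W}\psi)\big(\frac{x^{n}}{w_{n}}\big)=\sum_{k=0}^{n}b_{k}c_{n-k}$. Testing the desired identity $\varphi\circ S=\varphi\cdot_{W}\psi$ on the coordinate functionals $e_{j}$ defined by $e_{j}\big(\frac{x^{m}}{w_{m}}\big)=\delta_{j,m}$ shows that it can hold for all $\varphi$, with a suitable $\psi$, if and only if $s_{n,m}=c_{n-m}$ for all $n,m\in\mathbb{N}$; equivalently, if and only if $s_{n,m}=0$ for $m>n$ and $s_{n,m}$ depends only on the difference $n-m$, in which case necessarily $c_{l}=s_{l,0}$.

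It then remains to match these two conditions on the matrix $(s_{n,m})$ with the two conditions on $S$. The vanishing of $s_{n,m}$ for $m>n$ says precisely that $S$ sends each $\frac{x^{n}}{w_{n}}$, hence (by linearity) every polynomial of degree $\le N$, into the span of $\big\{\frac{x^{m}}{w_{m}}\big\}_{m\le N}$, i.e.\ that $S$ does not increase degrees. Assuming this, I would use $D_{W}\big(\frac{x^{m}}{w_{m}}\big)=\frac{x^{m-1}}{w_{m-1}}$ to compute both $D_{W}S\big(\frac{x^{n}}{w_{n}}\big)=\sum_{m}s_{n,m}\frac{x^{m-1}}{w_{m-1}}$ and $SD_{W}\big(\frac{x^{n}}{w_{n}}\big)=S\big(\frac{x^{n-1}}{w_{n-1}}\big)=\sum_{m}s_{n-1,m}\frac{x^{m}}{w_{m}}$, and compare coefficients: $S$ and $D_{W}$ commute if and only if $s_{n,m}=s_{n-1,m-1}$ for all $1\le m\le n$ (for larger $m$ both sides vanish by the degree condition). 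Together with the vanishing for $m>n$, this recursion is exactly the statement that $s_{n,m}$ depends only on $n-m$ --- descending from $(n,m)$ to $(n-m,0)$ when $m\le n$ and setting $c_{l}:=s_{l,0}$. This gives the claimed equivalence.

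For the last two assertions, I would recall from the discussion after Lemma \ref{Fx*ring} that $\varepsilon_{0}$ is the multiplicative identity of $\mathbb{F}[x]^{*}$, so that specializing $\varphi\circ S=\varphi\cdot_{W}\psi$ to $\varphi=\varepsilon_{0}$ forces $\psi=\varepsilon_{0}\cdot_{W}\psi=\varepsilon_{0}\circ S$; this both proves uniqueness and identifies $\psi$, consistently with $\psi\big(\frac{x^{l}}{w_{l}}\big)=\varepsilon_{0}\big(S(\frac{x^{l}}{w_{l}})\big)=s_{l,0}=c_{l}$. Finally, for $S=T_{h,W}$ the defining formula gives $s_{n,m}=\frac{h^{n-m}}{w_{n-m}}$, which vanishes for $m>n$ and depends only on $n-m$ (equivalently, $T_{h,W}$ does not increase degrees and commutes with $D_{W}$ by Proposition \ref{DWThW}); hence $T_{h,W}$ is of the required type, and $\psi=\varepsilon_{0}\circ T_{h,W}$ sends $\frac{x^{n}}{w_{n}}$ to the constant term $\frac{h^{n}}{w_{n}}$ of $T_{h,W}\big(\frac{x^{n}}{w_{n}}\big)$, so that $\psi$ agrees with $\varepsilon_{h}$ on the basis $\big\{\frac{x^{n}}{w_{n}}\big\}$ and therefore $\psi=\varepsilon_{h}$.

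The only step needing a little care is the reduction in the first paragraph --- passing from the operator identity $\varphi\circ S=\varphi\cdot_{W}\psi$ (valid for every $\varphi$) to the pointwise equalities $s_{n,m}=c_{n-m}$. This is legitimate because the coordinate functionals $e_{j}$ genuinely lie in $\mathbb{F}[x]^{*}$ and isolate the individual coefficients, and because every sum occurring is finite; everything past that is a routine comparison of matrix entries in the basis $\big\{\frac{x^{n}}{w_{n}}\big\}$.
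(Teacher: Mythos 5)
Your proof is correct and takes essentially the same route as the paper's: the same coordinate comparison in the basis $\big\{\frac{x^{n}}{w_{n}}\big\}$ showing that the identity holds for all $\varphi$ exactly when the matrix of $S$ is lower triangular and constant along diagonals, the same use of $\varepsilon_{0}$ being the multiplicative identity to force $\psi=\varepsilon_{0}\circ S$, and the same direct evaluation for $T_{h,W}$. The only difference is presentational: you carry out the coefficient comparison with $D_{W}$ explicitly, where the paper instead invokes the argument of Lemma \ref{Shefbin} with $Q_{A,W}$ replaced by $D_{W}$.
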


\begin{proof}
Write $S\big(\frac{x^{n}}{w_{n}}\big)$ as $\sum_{k=0}^{d_{k}}\frac{b_{n,k,S}}{w_{n-k}}\cdot\frac{x^{k}}{w_{k}}$ for some finite degree $d_{k}$, so that the composition gives $\varphi \circ S\big(\frac{x^{n}}{w_{n}}\big)=\sum_{k=0}^{d_{k}}\frac{b_{n,k,S}}{w_{n-k}}\varphi\big(\frac{x^{k}}{w_{k}}\big)$. Fixing some $\psi\in\mathbb{F}[x]^{*}$ and comparing this with $(\varphi\cdot_{W}\psi)\big(\frac{x^{n}}{w_{n}}\big)=\sum_{k=0}^{n}\psi\big(\frac{x^{n-k}}{w_{n-k}}\big)\varphi\big(\frac{x^{k}}{w_{k}}\big)$, we get an equality for every $\varphi\in\mathbb{F}[x]^{*}$ if and only if $\frac{b_{n,k,S}}{w_{n-k}}=\psi\big(\frac{x^{n-k}}{w_{n-k}}\big)$ for every $n$ and $k$. But this is equivalent to $b_{n,k,S}$ depending only on $n-k$ and vanishing for $n<k$ (so that the degree of $S(p)$ cannot exceed that of $p$), together with the equality $\psi(x^{l})=b_{l,0,S}$. The first assertion now follows as in the proof of Lemma \ref{Shefbin}, if we replace $Q_{A,W}$ by $D_{W}$, $T_{h,W}$ by $S$, and $d_{n,k,A,W}(h)$ by $b_{n,k,S}$. For evaluating $\psi$ we either observe that both $\varepsilon_{0} \circ S\big(\frac{x^{n}}{w_{n}}\big)$ and $\psi\big(\frac{x^{n}}{w_{n}}\big)$ are equal $\frac{b_{n,0,S}}{w_{n}}$ for every $n\in\mathbb{N}$, or deduce the equality $\psi=\varepsilon_{0}\cdot_{W}\psi=\varepsilon_{0} \circ S$ from the fact that $\varepsilon_{0}$ is the multiplicative identity of $\mathbb{F}[x]^{*}$. For $S=T_{h,W}$ the coefficient $b_{n,k,S}$ equals $h^{n-k}$ for any $n$ and $k$ (by definition), so that the equality $\varphi \circ T_{h,W}=\varphi\cdot_{W}\varepsilon_{h}$ follows from the calculations above. This completes the proof of the lemma.
\end{proof}

From this we may deduce another characterization of Sheffer sequences, as is
given in
\begin{thm}
A graded sequence $\{p_{n}(x)\}_{n\in\mathbb{N}}$ of polynomials is $W$-Sheffer if and only if there exists another graded sequence $\{d_{l}(x)\}_{l\in\mathbb{N}}$ of polynomials such that the equality $\varphi\cdot_{W}\psi\big(\frac{p_{n}}{w_{n}}\big)=\sum_{k=0}^{n}\varphi\big(\frac{p_{k}}{w_{k}}\big)\psi\big(\frac{d_{n-k}}{w_{n-k}}\big)$ holds for every $\varphi$ and $\psi$ from $\mathbb{F}[x]^{*}$. \label{ShefFx*}
\end{thm}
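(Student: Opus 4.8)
The plan is to translate the functional identity in the statement into a statement about operators, and then invoke Theorem~\ref{comp} together with Lemma~\ref{opFx*}. First I would fix a graded sequence $\{p_n(x)\}$ with matrix $A\in L$, and observe that, exactly as in Lemma~\ref{Shefbin} and Proposition~\ref{Shefmul}, the $W$-translation operator acts on the basis $\{p_n/w_n\}$ by some coefficients $d_{n,k,A,W}(h)$ which, after writing $T_{h,W}=W(hD_W)$ as in Proposition~\ref{DWThW}, correspond to coefficients $b_{n,k,Q_A}$ describing how $Q_{A,W}^{\,l}$ (equivalently the ``$\beta$-part'' of the translation) acts in the $p$-basis. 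The key point is that the proposed product rule
\[
\varphi\cdot_W\psi\Big(\tfrac{p_n}{w_n}\Big)=\sum_{k=0}^{n}\varphi\Big(\tfrac{p_k}{w_k}\Big)\psi\Big(\tfrac{d_{n-k}}{w_{n-k}}\Big)
\]
is precisely the assertion that, in the basis $\{p_n/w_n\}$, the coefficients governing the relevant degree-lowering operator depend only on $n-k$; the auxiliary sequence $\{d_l(x)\}$ is then forced to be the sequence whose $l$-th member encodes those common coefficients.

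Concretely, I would argue as follows. Suppose first the identity holds for all $\varphi,\psi\in\mathbb{F}[x]^*$. Specializing $\varphi$ to run over the dual basis of $\{p_k/w_k\}$ shows that the bilinear form $\varphi\cdot_W\psi$ expressed in the $p$-basis has ``structure constants'' $\psi(d_{n-k}/w_{n-k})$ depending only on $n-k$; feeding this into Lemma~\ref{opFx*} (with the operator $S$ being $T_{h,W}$, but now analyzed with respect to the $p$-basis rather than the monomial basis) forces the coefficients $d_{n,k,A,W}(h)$ to depend only on $n-k$, which by Lemma~\ref{Shefbin} says $\{p_n(x)\}$ is $W$-Sheffer. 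Conversely, if $\{p_n(x)\}$ is $W$-Sheffer, Theorem~\ref{comp} gives $A\in R_W$, so by Corollary~\ref{ThWser} there is a genuine sequence $d_l\in\mathbb{F}[h]$ with $\sum_l d_l(h)y^l/w_l=W(h\beta(y))$; one then checks that with $d_l(x):=d_l\big|_{h=x}$ (a graded sequence, since $\deg d_l=l$) the claimed product identity holds, by computing both sides as coefficients in the ring $\mathbb{F}\ldbrack x,y\rdbrack$ using the factored generating function $\alpha(y)W(x\beta(y))$ and the multiplicativity of the $\cdot_W$-product established in Lemma~\ref{Fx*ring}.

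The main obstacle I anticipate is the bookkeeping in the forward direction: the product $\cdot_W$ and Lemma~\ref{opFx*} are both phrased in terms of the monomial basis $\{x^n/w_n\}$, whereas the hypothesis is phrased in the $\{p_n/w_n\}$ basis, so I must be careful that passing to the $p$-basis is legitimate — this is where invertibility of $A$ (Proposition~\ref{lowtrigrad}) and the fact that $\{p_k/w_k\}$ is again a basis of $\mathbb{F}[x]$ enter. A secondary subtlety is verifying that the $\{d_l(x)\}$ produced in the converse direction is \emph{graded}: this follows because $\deg d_l=l$ by Corollary~\ref{ThWser}, but it should be stated explicitly so that the ``another graded sequence'' in the theorem is honestly exhibited. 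Once these basis-change issues are handled, both implications reduce to comparing coefficients of $p_k(x)y^m$ in $\mathbb{F}\ldbrack x,y\rdbrack$, which is routine given the earlier results.
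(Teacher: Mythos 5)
Your proof is correct, and your converse direction is essentially the paper's own computation: the paper also takes $\{d_{l}\}$ to be the $W$-binomial sequence with the same $\beta$-parameter and compares coefficients of $y^{n}$ after applying $\varphi\cdot_{W}\psi$ to $\alpha(y)W\big(x\beta(y)\big)$; you merely package the convolution-splitting of $\beta^{l}=\beta^{k}\beta^{l-k}$ as multiplicativity of the isomorphism from Lemma \ref{Fx*ring} together with Corollary \ref{ThWser}, and your explicit remark that $\deg d_{l}=l$ (so the exhibited sequence really is graded) is a point the paper leaves implicit. The forward direction is where you genuinely differ. The paper specializes $\psi$ to the functional dual to $d_{1}$ with respect to the basis $\{d_{l}\}$ (using the assumed gradedness of that sequence), so that the product rule becomes $\varphi\cdot_{W}\psi=\varphi \circ Q_{A,W}$ for all $\varphi$, and then invokes the nontrivial direction of Lemma \ref{opFx*} to conclude that $Q_{A,W}$ commutes with $D_{W}$, finishing via Proposition \ref{DWThW}. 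You instead specialize $\psi=\varepsilon_{h}$, use only the easy ``particular case'' clause of Lemma \ref{opFx*} (namely $\varphi\cdot_{W}\varepsilon_{h}=\varphi \circ T_{h,W}$), and test against the $W$-dual basis of $\big\{\frac{p_{k}}{w_{k}}\big\}$ (available because $\{p_{n}\}$ is graded) to read off $T_{h,W}\big(\frac{p_{n}}{w_{n}}\big)=\sum_{k}\frac{d_{n-k}(h)}{w_{n-k}}\cdot\frac{p_{k}}{w_{k}}$, so that Lemma \ref{Shefbin} applies directly. Your route avoids the harder half of Lemma \ref{opFx*} and never needs $\{d_{l}\}$ to be a basis in that implication; the paper's route delivers the commutation of $Q_{A,W}$ with $D_{W}$ outright, which is the operator-theoretic form of the Sheffer property it reuses elsewhere. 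One request for the final write-up: state the specialization $\psi=\varepsilon_{h}$ explicitly, since ``feeding this into Lemma \ref{opFx*} with $S=T_{h,W}$ analyzed in the $p$-basis'' currently leaves that decisive step for the reader to reconstruct.
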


\begin{proof}
If such a sequence $\{d_{l}(x)\}_{l\in\mathbb{N}}$ exists, then let $\psi$ be the unique element of $\mathbb{F}[x]^{*}$ which sends $\frac{d_{1}}{w_{1}}$ to 1 and annihilates all the other $d_{l}$s. With this $\psi$ we get the equality $\varphi\cdot_{W}\psi\big(\frac{p_{n}}{w_{n}}\big)=\varphi\big(\frac{p_{n-1}}{w_{n-1}}\big)=(\varphi \circ Q_{A,W})\big(\frac{p_{n}}{w_{n}}\big)$ for every $\varphi\in\mathbb{F}[x]^{*}$ and $n\in\mathbb{N}$. But then $Q_{A,W}$ must commute with $D_{W}$ by Lemma \ref{opFx*}, so that $\{p_{n}(x)\}_{n\in\mathbb{N}}$ is $W$-Sheffer by Proposition \ref{DWThW}. Conversely, applying $\varphi$ to the expression from Proposition
\ref{genfunc} yields the equality
\[\sum_{n=0}^{\infty}\varphi\bigg(\frac{p_{n}}{w_{n}}\bigg)(x)y^{n}=\varphi\Big(\alpha(y)W\big(x\beta(y)\big)\Big)=
\alpha(y)\sum_{l=0}^{\infty}\beta^{l}(y)\varphi\bigg(\frac{x^{l}}{w_{l}}\bigg).\] Replace $\varphi$ by a product $\varphi\cdot_{W}\psi$, and decompose
$\varphi\cdot_{W}\psi\big(\frac{x^{l}}{w_{l}}\big)$ according to the definition. The right hand side is now easily seen to be the same one but multiplied by $\sum_{r=0}^{\infty}\beta^{r}(y)\psi\big(\frac{x^{r}}{w_{r}}\big)$. But the same argument allows us to write this multiplier as $\sum_{n=0}^{\infty}\psi\big(\frac{d_{n}}{w_{n}}\big)(x)y^{n}$, where $\{d_{l}(x)\}_{l\in\mathbb{N}}$ is the $W$-binomial sequence with the same $\beta$-parameter as $\{p_{n}(x)\}_{n\in\mathbb{N}}$. Comparing the coefficient of $y^{n}$ in this product with the coefficient $\varphi\cdot_{W}\psi\big(\frac{p_{n}}{w_{n}}\big)(x)$ from the left hand side now yields the desired equality. This completes the proof of the theorem.
\end{proof}

\smallskip

We turn to a yet another description of Sheffer sequences, in terms of dual bases. If a sequence $\{\varphi_{r}\}_{r\in\mathbb{N}}$ of functionals in $\mathbb{F}[x]^{*}$ satisfies $v(\varphi_{r})\to\infty$ as $r\to\infty$, and $\{c_{r}\}_{r\in\mathbb{N}}$ is any sequence of scalars, then the sum $\sum_{r=0}^{\infty}c_{r}\varphi_{r}$ produces a well-defined element of $\mathbb{F}[x]^{*}$. Indeed, $\sum_{r=0}^{\infty}c_{r}\varphi_{r}(p)$ is finite for every polynomial $p\in\mathbb{F}[x]$ (this also follows by transferring to $\mathbb{F}\ldbrack y \rdbrack$). Conversely, the convergence of, e.g., $\sum_{r=0}^{\infty}\varphi_{r}$ in this sense implies the condition on $v(\varphi_{r})$. In this case we shall call $\{\varphi_{r}\}_{r\in\mathbb{N}}$ a \emph{pseudo-basis} of $\mathbb{F}[x]^{*}$ if any $\eta\in\mathbb{F}[x]^{*}$ can be presented as $\sum_{r=0}^{\infty}c_{r}\varphi_{r}$ with a unique sequence of scalars $\{c_{r}\}_{r\in\mathbb{N}}$. We shall need
\begin{lem}
Let $\{\varphi_{r}\}_{r\in\mathbb{N}}\subseteq\mathbb{F}[x]^{*}$ be a pseudo-basis of $\mathbb{F}[x]^{*}$, and define $b_{r,l}$ to be the uniquely determined coefficient such that $\sum_{r=0}^{\infty}b_{r,l}\varphi_{r}$ is the element of $\mathbb{F}[x]^{*}$ which takes each monomial $\frac{x^{k}}{w_{k}}$ to $\delta_{k,l}$. Fixing $r\in\mathbb{N}$, we have $b_{r,l}=0$ for any larger enough $l$. \label{rowfin}
\end{lem}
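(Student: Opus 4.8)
Let me restate what we need: given a pseudo-basis $\{\varphi_r\}$, the coefficients $b_{r,l}$ expressing the "monomial-dual" functionals in terms of the $\varphi_r$ must, for each fixed $r$, vanish for all large $l$. The natural strategy is to transfer everything to $\mathbb{F}\ldbrack y\rdbrack$ via the isomorphism of Lemma \ref{Fx*ring}, where $\varphi_r$ becomes a power series of valuation $v(\varphi_r)$, and to exploit the structure of $\mathbb{F}\ldbrack y\rdbrack$ as a complete discrete valuation ring. The key observation I would make first is that the functional taking $\frac{x^k}{w_k}$ to $\delta_{k,l}$ corresponds under this isomorphism to the monomial $y^l$; so the statement $\sum_r b_{r,l}\varphi_r = (\text{that functional})$ becomes the identity $\sum_r b_{r,l}\varphi_r(y) = y^l$ in $\mathbb{F}\ldbrack y\rdbrack$, where $\varphi_r(y)$ denotes the image series.

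**The main argument.** Having made this translation, the heart of the matter is a valuation/convergence estimate. Because $\{\varphi_r\}$ is a pseudo-basis, every element of $\mathbb{F}\ldbrack y\rdbrack$ is uniquely a convergent sum $\sum_r c_r\varphi_r(y)$, and convergence here means $v(\varphi_r)\to\infty$. I would argue that the $\varphi_r$ can, in fact, be taken so that $v(\varphi_r)=r$: more precisely, since the valuations tend to infinity and the family is a pseudo-basis, for each $m$ exactly one index contributes valuation $m$ — otherwise one could not uniquely represent $y^m$ — so after reindexing $v(\varphi_r)=r$. (This reindexing is harmless since the conclusion is stated "for fixed $r$".) Now fix $l$ and consider $y^l=\sum_r b_{r,l}\varphi_r(y)$. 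Comparing coefficients of $y^l$ and lower: the terms with $r>l$ have valuation $>l$, so they cannot affect coefficients up to $y^l$; writing $\varphi_r(y)=\sum_{j\geq r}a_{r,j}y^j$ with $a_{r,r}\neq0$, the triangular system forces $b_{r,l}=0$ for $r>l$ by induction on $r$ descending from $l$ — indeed matching the coefficient of $y^r$ on both sides, once $b_{r',l}=0$ is known for $l<r'\le r-1$ shows $b_{r,l}a_{r,r}=0$. Hence for fixed $r$, $b_{r,l}=0$ whenever $l<r$, which is exactly the assertion that $b_{r,l}$ vanishes from $l=r$ onward — wait, the claimed vanishing is for large $l$, so I actually want $b_{r,l}=0$ for $l\geq r$ or thereabouts; let me reorganize: the correct reading is that the "monomial-dual for level $l$" uses only $\varphi_r$ with $r\leq l$ (valuation reasons), i.e. $b_{r,l}=0$ for $r>l$; equivalently for fixed $r$, $b_{r,l}=0$ once $l<r$. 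That gives vanishing for \emph{small} $l$. For \emph{large} $l$ I instead observe: the functional dual to $\frac{x^l}{w_l}$, pulled back, is $y^l$, and since $\{\varphi_r\}$ is a basis with $v(\varphi_r)=r$, one can solve the triangular system explicitly so that $b_{r,l}$ for fixed $r$ is a polynomial-type expression in the $a$'s that terminates — more cleanly, express the matrix $(a_{r,j})$ (lower-triangular, invertible) and note $(b_{r,l})$ is its inverse, which is again row-finite, i.e. each row of the inverse of a row-finite lower-triangular invertible matrix is row-finite. That is precisely the content of Proposition \ref{lowtrigrad} applied in transpose: the inverse of a lower-triangular row-finite invertible matrix is itself row-finite.

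**Cleanest route and the obstacle.** So the streamlined plan is: (1) transfer to $\mathbb{F}\ldbrack y\rdbrack$; (2) show the pseudo-basis, after reindexing, is encoded by an invertible lower-triangular matrix $U=(a_{r,j})$ with $a_{r,r}\neq 0$ (this uses uniqueness of representations plus $v(\varphi_r)\to\infty$, forcing one valuation value per level); (3) identify $(b_{r,l})$ as the matrix $U^{-1}$, whose $(r,l)$ entry expresses $y^l$ in the pseudo-basis; (4) invoke the fact — implicit in Proposition \ref{lowtrigrad} and the discussion following Proposition \ref{seqmat} — that the inverse of an invertible lower-triangular row-finite matrix is row-finite, which gives exactly "$b_{r,l}=0$ for $l$ large, $r$ fixed." The step I expect to be the genuine obstacle is step (2): proving that a pseudo-basis must have exactly one functional of each valuation $0,1,2,\dots$. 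One must rule out, e.g., two functionals of valuation $0$ and none of valuation $1$; this follows because with $v(\varphi_r)\to\infty$ only finitely many have valuation $\le N$ for each $N$, and a counting/dimension argument on the finite-dimensional space of functionals vanishing on degree $<N$ polynomials (which has dimension $N+1$ worth of "leading terms") forces surjectivity onto each valuation class — made rigorous by the uniqueness of the representation of each $y^m$. Once that combinatorial-linear-algebra fact is in hand, the rest is the routine row-finiteness of triangular inverses already established in Section \ref{RArray}.
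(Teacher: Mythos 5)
There is a genuine gap, and it sits exactly where you suspected: step (2). A pseudo-basis need \emph{not} contain exactly one functional of each valuation, so you cannot reindex to get $v(\varphi_{r})=r$ and a triangular matrix. Concretely, take (under the identification of Lemma \ref{Fx*ring}) $\varphi_{0}=1$, $\varphi_{1}=1+y$, and $\varphi_{r}=y^{r}$ for $r\geq2$: every $\sum_{n}a_{n}y^{n}$ is uniquely $c_{0}\varphi_{0}+c_{1}\varphi_{1}+\sum_{r\geq2}c_{r}\varphi_{r}$ with $c_{1}=a_{1}$, $c_{0}=a_{0}-a_{1}$, $c_{r}=a_{r}$, so this is a pseudo-basis with two elements of valuation $0$ and none of valuation $1$. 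Hence the dimension-count you sketch cannot be made rigorous, and your reduction to ``invert a lower-triangular row-finite matrix'' only treats the special case $v(\varphi_{r})=r$ --- which is essentially the easy case, and is not enough for the use of the lemma in Proposition \ref{dualbas}, where the pseudo-basis is arbitrary. (A smaller slip along the way: your descending induction asserting $b_{r,l}=0$ for $r>l$ is false even in the triangular case; e.g.\ for $\varphi_{r}=y^{r}+y^{r+1}$ one has $b_{r,l}=(-1)^{r-l}$ for all $r\geq l$, so a fixed column of $B$ may have infinitely many nonzero entries --- only the rows must be finite, which is exactly the content of the lemma.)

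The paper's proof avoids any structural claim about the valuations and argues by contradiction with the \emph{uniqueness} in the definition of a pseudo-basis: for fixed $r$ it considers, for each $d$, the finite-dimensional spaces $V_{r,d}$ of truncated coefficient sequences $\{c_{k}\}_{k=0}^{r}$ extendable to a series $\sum_{k}c_{k}\varphi_{k}$ of valuation at least $d$; if $b_{r,l}\neq0$ for infinitely many $l$ these spaces are nonzero for all $d$, and a stabilization (inverse-limit) argument over the decreasing chains $V_{r,d}$, with the surjectivity of the restriction maps $V_{r+1,d}\to V_{r,d}$, produces a nonzero sequence $\{c_{k}\}$ with $v\big(\sum_{k}c_{k}\varphi_{k}\big)>d$ for every $d$, i.e.\ a nontrivial representation of the zero functional --- a contradiction. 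If you want to repair your write-up, you either need an argument of this compactness type in the general case, or you must genuinely restrict the lemma to sequences with $v(\varphi_{r})=r$, which would weaken Proposition \ref{dualbas}.
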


\begin{proof}
For any $d$ and $s$ in $\mathbb{N}$ we denote $V_{s,d}$ the space of all finite sequences $\{c_{r}\}_{r=0}^{s}$ that admit an extension to an infinite sequence $\{c_{r}\}_{r=0}^{\infty}$ such that $v\big(\sum_{r=0}^{\infty}c_{r}\varphi_{r}\big) \geq d$. It is clear that $V_{s,d+1} \subseteq V_{s,d}$, and we define $U_{s}=\bigcap_{d=0}^{\infty}V_{s,d}$. It is clear from finite dimensionality that $V_{s,d}=U_{s}$ for large enough $d$.  Since we consider the existence of extensions, the natural restriction map of sequences takes $V_{s,d+1}$ surjectively onto $V_{s,d}$. It thus follows that $U_{s+1}$ maps surjectively onto $U_{s}$.

We claim that $U_{s}=\{0\}$ for all $s$. Indeed, assume that $U_{s}\neq\{0\}$ for some $s$. Starting from a non-zero element of $U_{s}$, we choose a pre-image in $U_{s+1}$, and continuing in this manner we get a non-zero sequence $\{c_{r}\}_{r=0}^{\infty}$ whose finite beginnings up to $s$ lies in $U_{s}$ for any $s\in\mathbb{N}$. We claim that $\sum_{r=0}^{\infty}c_{r}\varphi_{r}=0$. Indeed, given any $d\in\mathbb{N}$ we choose $s$ such that $v(\varphi_{r})>d$ for all $r>s$, and then $\{c_{r}\}_{r=0}^{s} \in U_{s} \subseteq V_{s,d+1}$ admits some continuation such that $v\big(\sum_{r=0}^{s}c_{r}\varphi_{r}+\sum_{r=s+1}^{\infty}\tilde{c}_{r}\varphi_{r}\big)>d$. But the difference between this functional and the one under consideration is based only on $\varphi_{r}$ with valuation larger than $d$. Hence $v\big(\sum_{r=0}^{\infty}c_{r}\varphi_{r}\big)>d$ for every $d$, which proves our claim. But the equality $\sum_{r=0}^{\infty}c_{r}\varphi_{r}=0=\sum_{r=0}^{\infty}0\varphi_{r}$, where the $c_{r}$s are not all 0s, contradicts the assumption that $\{\varphi_{r}\}_{r\in\mathbb{N}}$ is a pseudo-basis. This proves that all the spaces $U_{s}$ are trivial.

Now fix $s\in\mathbb{N}$. As $U_{s}=\{0\}$, we have $V_{s,d}=0$ for large enough $d$. As $v\big(\sum_{r=0}^{\infty}b_{r,l}\varphi_{r}\big)=l$ by definition, we find that $\{b_{r,l}\}_{r=0}^{s}$ is an element of $V_{s,d}$ for every $l \geq d$. Hence this sequence must vanish for large enough $l$, which yields the desired assertion for every $0 \leq r \leq s$. This completes the proof of the lemma.
\end{proof}

Let now $\{p_{n}(x)\}_{n\in\mathbb{N}}$ be an arbitrary (not necessarily graded) sequence of polynomials, which we assume to be a basis for $\mathbb{F}[x]$. We say that a sequence of functionals $\{\varphi_{r}\}_{r\in\mathbb{N}}\subseteq\mathbb{F}[x]^{*}$ is the \emph{$W$-dual basis} of $\{p_{n}(x)\}_{n\in\mathbb{N}}$ if the equality $\varphi_{r}\big(\frac{p_{n}}{w_{n}}\big)=\delta_{n,r}$ holds for every $n$ and $r$. We now have
\begin{prop}
Any $W$-dual basis is a pseudo-basis. Conversely, any pseudo-basis $\{\varphi_{r}\}_{r\in\mathbb{N}}$ of $\mathbb{F}[x]^{*}$ is $W$-dual to a unique basis $\{p_{n}(x)\}_{n\in\mathbb{N}}$ of $\mathbb{F}[x]$. In addition, this sequence of polynomials is graded if and only if $v(\varphi_{r})=r$ for all $r$, and any sequence $\{\varphi_{r}\}_{r\in\mathbb{N}}$ with $v(\varphi_{r})=r$ arises in this way. \label{dualbas}
\end{prop}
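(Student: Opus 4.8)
The plan is to move between three bookkeeping systems: the polynomials $\{p_n\}$ expressed in the monomial basis via a lower-triangular matrix (Proposition \ref{lowtrigrad}), the functionals $\{\varphi_r\}$ expressed in the dual monomial basis via an infinite matrix of coefficients, and the coefficient arrays $b_{r,l}$ from Lemma \ref{rowfin} that record how the monomial-dual functionals decompose over a given pseudo-basis. First I would settle the direction ``$W$-dual basis $\Rightarrow$ pseudo-basis.'' Given a basis $\{p_n\}$ of $\mathbb{F}[x]$, since $p_n$ involves only monomials of degree $\le d_n$ and there are only finitely many $n$ with $d_n \le k$ for each fixed $k$, one checks that the condition $\varphi_r(p_n/w_n) = \delta_{n,r}$ forces $v(\varphi_r) \to \infty$: indeed $\varphi_r$ must annihilate every $p_n$ with $n \neq r$, hence annihilate the span of all such $p_n$, which (being a basis) has finite codimension growing with $r$, so $\varphi_r$ kills all polynomials of bounded degree once $r$ is large. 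Thus $\sum_r c_r \varphi_r$ converges for every scalar sequence. Uniqueness and existence of the expansion of an arbitrary $\eta$ is then immediate: set $c_r = \eta(p_r/w_r)$ and observe $\sum_r c_r \varphi_r$ agrees with $\eta$ on the basis $\{p_n/w_n\}$, while any valid expansion must have exactly these coefficients by pairing against $p_r/w_r$.

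For the converse, start with a pseudo-basis $\{\varphi_r\}$ and invoke Lemma \ref{rowfin} to get the matrix $(b_{r,l})$, which is \emph{column-finite} in the sense that for fixed $r$ only finitely many $l$ give $b_{r,l} \neq 0$. I want to produce $\{p_n\}$ with $\varphi_r(p_n/w_n) = \delta_{n,r}$. The natural guess is to let $p_n/w_n$ be the polynomial whose coefficient array (in the monomial basis $\{x^k/w_k\}$) is the $n$th column of the inverse of the transpose of $(b_{r,l})$ — equivalently, to define $p_n$ so that $\varepsilon$-pairings work out. Concretely: the functional $\eta_l := \sum_r b_{r,l}\varphi_r$ takes $x^k/w_k \mapsto \delta_{k,l}$, so $\{\eta_l\}$ is literally the dual monomial pseudo-basis. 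Then $\varphi_r = \sum_l c_{l,r}\eta_l$ for a unique scalar array $(c_{l,r})$, and the two arrays $(b_{r,l})$ and $(c_{l,r})$ are mutually inverse (as operators on sequences, both compositions being well-defined by the finiteness in Lemma \ref{rowfin} on one side and the pseudo-basis convergence on the other). Now $\varphi_r(q) = \sum_l c_{l,r}\,[\text{coeff of } x^l/w_l \text{ in } q]$, so the requirement $\varphi_r(p_n/w_n) = \delta_{n,r}$ says exactly that the coefficient array of $p_n/w_n$ is the $n$th column of a right inverse of $(c_{l,r})$, i.e. the $n$th column of $(b_{r,l})$ read appropriately. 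I would define $p_n/w_n := \sum_k b_{n,k}\, x^k/w_k$ — wait, more carefully: define $p_n$ by taking the $n$th row of $(b_{r,l})$ as coefficients; the column-finiteness of Lemma \ref{rowfin} is precisely what guarantees this is a \emph{polynomial}. Then $\varphi_r(p_n/w_n) = \sum_l c_{l,r} b_{n,l} = (\text{product of inverse matrices})_{r,n} = \delta_{n,r}$, and since $(b_{n,l})$ is invertible the $\{p_n\}$ form a basis. Uniqueness of $\{p_n\}$ follows because the pairing conditions determine each $p_n$ outright.

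Finally, the graded case. The array $(b_{r,l})$ is lower-triangular with nonzero diagonal iff $v(\varphi_r) = r$ for all $r$: the functional $\sum_r b_{r,l}\varphi_r$ has valuation exactly $l$ (it is the $l$th dual monomial), and Corollary \ref{vFx*} together with the assumption $v(\varphi_r)=r$ forces $b_{r,l}=0$ for $r<l$ and $b_{l,l}\neq 0$; conversely lower-triangularity of the inverse array forces $v(\varphi_r)=r$. By Proposition \ref{lowtrigrad} the matrix $(b_{n,l})$ then corresponds to a graded sequence $\{p_n\}$, and conversely a graded $\{p_n\}$ gives a lower-triangular invertible coefficient matrix, which pushes back through the mutual-inverse relation to give $v(\varphi_r)=r$. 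That every sequence with $v(\varphi_r)=r$ is a pseudo-basis should be noted along the way — it follows from lower-triangular-with-unit-diagonal (after scaling) arrays being invertible over $\mathbb{F}$, so such a sequence automatically decomposes every $\eta$ uniquely. The main obstacle is the bookkeeping in the middle paragraph: making precise that $(b_{r,l})$ and $(c_{l,r})$ are genuinely two-sided inverses as infinite matrices, where one direction needs the column-finiteness from Lemma \ref{rowfin} and the other needs the pseudo-basis convergence, and checking that both orders of composition are legitimate; once that is clean, everything else is a short pairing computation.
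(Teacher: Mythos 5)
Your overall route is the paper's own: encode the pseudo-basis by the row-finite matrix $S$ with entries $s_{k,r}=\varphi_{r}\big(\frac{x^{k}}{w_{k}}\big)$, use Lemma \ref{rowfin} for the row-finiteness of $B=(b_{r,l})$, show the two matrices are mutually inverse, read $p_{n}$ off the $n$th row of $B$, and deduce the graded statement from triangularity. The ``bookkeeping obstacle'' you flag at the end is settled exactly as in the paper: the defining property of the $b_{r,l}$ is the identity $SB=I$, and $S$ is bijective as an operator on sequences precisely because $\{\varphi_{r}\}_{r\in\mathbb{N}}$ is a pseudo-basis (uniqueness of expansions gives injectivity, existence gives surjectivity, and $\sum_{r}c_{r}\varphi_{r}$ converges for every scalar sequence since $v(\varphi_{r})\to\infty$); hence $B=S^{-1}$, so $BS=I$, which is exactly $\varphi_{r}\big(\frac{p_{n}}{w_{n}}\big)=\delta_{n,r}$. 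Your treatment of the graded case and of the claim that $v(\varphi_{r})=r$ alone already forces the pseudo-basis property (via $S$ being lower triangular with non-vanishing diagonal, hence in $L$) also matches the paper.

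The one genuine flaw is your justification of $v(\varphi_{r})\to\infty$ in the forward direction. The span of $\{p_{n}:n\neq r\}$ has codimension $1$, not a codimension ``growing with $r$,'' and the fact that only finitely many $p_{n}$ have degree $\leq N$ does not by itself show that $\varphi_{r}$ kills all polynomials of degree $\leq N$ for large $r$: a polynomial of low degree may require basis elements of large degree (hence possibly large index) in its expansion, as with $p_{1}=x+x^{2}$, $p_{2}=x^{2}$, where $x=p_{1}-p_{2}$. The step you need, and the one the paper uses, is that each monomial $x^{k}$ with $k\leq N$ is a \emph{finite} linear combination of the $p_{n}$ (because they form a basis), so all these monomials lie in the span of $p_{0},\dots,p_{M}$ for a single $M$; then for every $r>M$ the functional $\varphi_{r}$ annihilates all polynomials of degree $\leq N$, i.e.\ $v(\varphi_{r})>N$. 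With that repaired, the rest of your first direction (coefficients $c_{r}=\eta\big(\frac{p_{r}}{w_{r}}\big)$, uniqueness by pairing against the $p_{r}$) is correct, and the remainder of the proposal stands.
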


\begin{proof}
Let $\{\varphi_{r}\}_{r\in\mathbb{N}}$ be the sequence which is $W$-dual to $\{p_{n}(x)\}_{n\in\mathbb{N}}$. We need to show that $v(\varphi_{r})\to\infty$ as $r\to\infty$. Fix $d\in\mathbb{N}$. As $\{p_{n}(x)\}_{n\in\mathbb{N}}$ is a basis, the monomials up to $x^{d}$ are linear combinations of the $p_{n}$s. Hence all of them involve only finitely many of the latter. If $p_{l}$ is the maximal one appearing in any of these monomials, then it is clear from the
definition (and from linearity) that $\varphi_{r}(x^{n})$ vanishes for every
$r>l$ and $n \leq d$. Hence $v(\varphi_{r})>d$ for any such $r$, establishing
the valuation condition. The $W$-duality now shows that for any sequence $\{a_{n}\}_{n\in\mathbb{N}}$ of scalars, there exists a unique combination, namely $\sum_{r=0}^{\infty}\frac{a_{r}}{w_{r}}\varphi_{r}$, which takes each $p_{n}$ to $a_{n}$. In particular any $\psi\in\mathbb{F}[x]^{*}$ is represented uniquely as $\sum_{r=0}^{\infty}\psi\big(\frac{p_{r}}{w_{r}}\big)\varphi_{r}$. This proves the first assertion.

Conversely, let $\{\varphi_{r}\}_{r\in\mathbb{N}}$ be a pseudo-basis, and denote $\varphi_{r}\big(\frac{x^{k}}{w_{k}}\big)$ by $s_{k,r}$. We have $s_{k,r}=0$ if $v(\varphi_{r})>k$, which happens, for fixed $k$, if $r$ is large enough. The matrix $S$ with entries $s_{k,r}$ is therefore row-finite. Let $b_{r,l}$ be the coefficients from Lemma \ref{rowfin} ($\{\varphi_{r}\}_{r\in\mathbb{N}}$ is a pseudo-basis). This translates to the equality $\sum_{r=0}^{\infty}s_{k,r}b_{r,l}=\delta_{k,l}$, an essentially finite equality holding for every $k$ and $l$. As Lemma \ref{rowfin} shows that
the matrix $B$ with entries $b_{r,l}$ is also row-finite, the expression $p_{n}(x)=w_{n}\sum_{k=0}^{\infty}b_{n,k}\frac{x^{k}}{w_{k}}$ is a polynomial  for each $n$. It follows that $\varphi_{r}\big(\frac{p_{n}}{w_{n}}\big)$ equals $\sum_{k=0}^{\infty}b_{n,k}s_{k,r}$ (a finite sum), the $(n,r)$ entry of $BS$, for every $n$ and $r$. We now consider the row-finite matrices $B$ and $S$ as representing linear operators on a vector space with a countable basis. We have seen that the product $SB$ is the identity. But the pseudo-basis property shows that $S$ is invertible, so that $B=S^{-1}$ and $BS$ is the identity as well. Hence $\varphi_{r}\big(\frac{p_{n}}{w_{n}}\big)=\delta_{n,r}$ for every $n$ and $r$. The $p_{n}$s are therefore linearly independent, and from the relation $S=B^{-1}$ we deduce the equality $\frac{x_{l}}{w_{l}}=\sum_{n=0}^{\infty}s_{l,n}\frac{p_{n}}{w_{n}}$ for every $l\in\mathbb{N}$. It follows that the $p_{n}$s form a basis for $\mathbb{F}[x]$, and $\{\varphi_{r}\}_{r}$ is the $W$-dual basis. This proves the second assertion.

For the third assertion, note that if $\{p_{n}(x)\}_{n\in\mathbb{N}}$ is graded then $\{p_{n}(x)\}_{n=0}^{l}$ span the space of polynomials of degree at most $l$. Hence $\varphi_{r}$ vanishes on each polynomial of degree smaller than $r$, but not on $p_{r}$ of degree $r$, showing that $v(\varphi_{r})=r$. Conversely, if $v(\varphi_{r})=r$ then the matrix $S$ from the previous paragraph lies in the group $L$ from Proposition \ref{lowtrigrad}. Hence its inverse $B$ lies in $L$ as well, showing that the corresponding sequence $\{p_{n}(x)\}_{n\in\mathbb{N}}$ is graded (and in particular forms a basis for $\mathbb{F}[x]$). This completes the proof of the proposition.
\end{proof}

Fix elements $\xi$ and $\eta$ of $\mathbb{F}[x]^{*}$ with $v(\xi)=0$ and $v(\eta)=1$. Corollary \ref{vFx*} shows that wherever $v(\xi)=0$ and $v(\eta)=1$ the sequence in which $\varphi_{r}=\xi\cdot_{W}\eta_{W}^{r}$ (where $\eta_{W}^{r}$ stands for the $r$th power of $\eta$ in the product $\cdot_{W}$) satisfies $v(\varphi_{r})=r$ for any $r\in\mathbb{N}$. It is therefore a pseudo-basis which is $W$-dual to some graded sequence of polynomials, by Proposition \ref{dualbas}. We now prove
\begin{thm}
The graded sequence $\{p_{n}(x)\}_{n\in\mathbb{N}}$ is $W$-Sheffer if and only
if its $W$-dual basis takes the form $\varphi_{r}=\xi\cdot_{W}\eta_{W}^{r}$ for
some elements $\xi$ and $\eta$ of $\mathbb{F}[x]^{*}$ with $v(\xi)=0$ and
$v(\eta)=1$. \label{Shefdual}
\end{thm}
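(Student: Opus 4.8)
The plan is to translate everything into the ring $\mathbb{F}\ldbrack y \rdbrack$ via the isomorphism of Lemma \ref{Fx*ring}, where the product $\cdot_W$ becomes ordinary multiplication of power series, and to use the matrix relating the $W$-dual basis to the monomial dual basis. Concretely, let $A \in L$ be the matrix of $\{p_n\}$, let $C = A^{-1}$ be the matrix expressing the $x^n$ in terms of the $p_n$, and let $\{\varphi_r\}$ be the $W$-dual basis, which exists and is a pseudo-basis by Proposition \ref{dualbas}, with $v(\varphi_r)=r$ for all $r$. The first step is to identify $\varphi_r$ explicitly: writing $\frac{x^k}{w_k} = \sum_n s_{k,n}\frac{p_n}{w_n}$ (so $S$ in the notation of Proposition \ref{dualbas} equals $C$), duality gives $\varphi_r\big(\frac{x^k}{w_k}\big) = s_{k,r} = c_{k,r}$, i.e. under the isomorphism of Lemma \ref{Fx*ring} the functional $\varphi_r$ corresponds to the power series $\sum_k c_{k,r} y^k$, which is exactly the column series $C_{C_r,W}(y)$ of the matrix $C$ with respect to the weight $W$.

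**Main argument.** Now I would invoke Theorem \ref{comp}: the sequence $\{p_n\}$ is $W$-Sheffer if and only if $A \in R_W$, and since $R_W$ is a group by Proposition \ref{Rgroup}, this is equivalent to $C = A^{-1} \in R_W$. By the discussion following Definition \ref{Riordan}, $C \in R_W$ exactly when there exist power series $\alpha', \beta'$ with $v(\alpha')=0$, $v(\beta')=1$ such that the column series of $C$ satisfy $C_{C_r,W}(y) = \alpha'(y)\frac{\beta'(y)^r}{w_r}$ for all $r$. Translating back through Lemma \ref{Fx*ring}: the power series $\alpha'$ corresponds to some $\xi \in \mathbb{F}[x]^*$ with $v(\xi)=0$, and dividing out $w_r$ (which is what the $\cdot_W$-product does, by the very definition of the isomorphism — powers of $\beta'$ in $\mathbb{F}\ldbrack y \rdbrack$ correspond to $\cdot_W$-powers in $\mathbb{F}[x]^*$, since the isomorphism is a ring isomorphism), the relation $C_{C_r,W} = \alpha' \cdot \beta'^r / w_r$ becomes precisely $\varphi_r = \xi \cdot_W \eta_W^r$ where $\eta \in \mathbb{F}[x]^*$ is the functional corresponding to $\beta'$, with $v(\eta) = v(\beta') = 1$ by Corollary \ref{vFx*}. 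This gives both directions at once: if $\{p_n\}$ is $W$-Sheffer then $\varphi_r$ has the stated form, and conversely if $\varphi_r = \xi \cdot_W \eta_W^r$ then the column series of $C$ form a geometric sequence, so $C \in R_W$, hence $A \in R_W$, hence $\{p_n\}$ is $W$-Sheffer by Theorem \ref{comp}.

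**Remaining care.** One point deserving attention is the bookkeeping with the weights $w_r$: the isomorphism of Lemma \ref{Fx*ring} sends $\varphi$ to $\sum_n \varphi\big(\frac{x^n}{w_n}\big) y^n$, i.e. the coefficient of $y^n$ is $\varphi$ evaluated at the \emph{normalized} monomial, so the dictionary between $\cdot_W$-powers of $\eta$ and ordinary powers of $\beta'$ is clean and the factors of $w_r$ are absorbed correctly; I would state this matching once and use it. A second point is that I must be sure $C$, not $A$, is the matrix whose columns encode the dual basis — this is the content of the relation $\frac{x^k}{w_k} = \sum_n c_{k,n}\frac{p_n}{w_n}$ together with $\varphi_r\big(\frac{p_n}{w_n}\big) = \delta_{n,r}$ — and this is the one small computation I would actually carry out. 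The rest is a transfer of the already-established equivalence $A \in R_W \iff \{p_n\}\ W\text{-Sheffer}$ through the inversion automorphism of the group $L$ and the ring isomorphism $\mathbb{F}[x]^* \cong \mathbb{F}\ldbrack y \rdbrack$; there is no genuine obstacle, only the need to keep the two matrices $A$ and $C = A^{-1}$ and their column series straight.
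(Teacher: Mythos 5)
Your overall route is the same as the paper's: reduce via Theorem \ref{comp} and Proposition \ref{Rgroup} to the condition $A^{-1}\in R_{W}$, identify the $W$-dual basis with the columns of $A^{-1}$ through the matrix $S$ from Proposition \ref{dualbas}, and transfer the geometric-column condition through the ring isomorphism of Lemma \ref{Fx*ring}. However, the step you yourself flagged as the one needing care is exactly where your bookkeeping goes wrong. From $\frac{x^{k}}{w_{k}}=\sum_{r}s_{k,r}\frac{p_{r}}{w_{r}}$ one gets $x^{k}=\sum_{r}\frac{w_{k}s_{k,r}}{w_{r}}p_{r}$, so $c_{k,r}=(A^{-1})_{k,r}=\frac{w_{k}s_{k,r}}{w_{r}}$; thus $S\neq C$ in general (they differ by conjugation by the diagonal matrix of the $w_{n}$), and the power series attached to $\varphi_{r}$ by Lemma \ref{Fx*ring} is $\sum_{k}s_{k,r}y^{k}=w_{r}\,C_{C_{r},W}(y)$, not $C_{C_{r},W}(y)$, and certainly not $\sum_{k}c_{k,r}y^{k}$, which omits the $\frac{1}{w_{k}}$ factors appearing in the definition of the column series.

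Moreover, your stated mechanism for fixing the weights, namely that ``the $\cdot_{W}$-product divides out $w_{r}$,'' is not correct: under the isomorphism of Lemma \ref{Fx*ring} the product $\cdot_{W}$ is plain multiplication of power series, with no weight division anywhere. If one follows your identifications literally, the condition $C\in R_{W}$, i.e. $C_{C_{r},W}=\alpha'\cdot\frac{(\beta')^{r}}{w_{r}}$, translates into $\varphi_{r}=\frac{1}{w_{r}}\,\xi\cdot_{W}\eta_{W}^{r}$, which is a different (and generally false) characterization, since the scalar $\frac{1}{w_{r}}$ cannot be reabsorbed into a fixed pair $(\xi,\eta)$ unless the $w_{r}$ form a geometric sequence. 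The correct cancellation is the one the paper carries out: the factor $w_{r}$ sits in the correspondence $\varphi_{r}\leftrightarrow w_{r}C_{C_{r},W}$ and exactly cancels the $\frac{1}{w_{r}}$ in the Riordan column condition, giving $\varphi_{r}=\xi\cdot_{W}\eta_{W}^{r}$ on the nose. With that single identification corrected, your argument coincides with the paper's proof.
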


\begin{proof}
Let $\{p_{n}(x)\}_{n\in\mathbb{N}}$ be a graded sequence in which $p_{n}(x)=\sum_{k=0}^{n}a_{n,k}x^{k}$, and define $A$ to be the associated element of $L$ from Proposition \ref{seqmat} as well as $S$, with the entries $s_{k,r}=\varphi_{r}\big(\frac{x^{k}}{w_{k}}\big)$, as in the proof of Proposition \ref{dualbas}. This proof shows that $S$ is the matrix which is inverse to $B$ in which $b_{n,k}=\frac{a_{n,k}w_{k}}{w_{n}}$. Hence the $(k,r)$-entry of $A^{-1}$ is $\frac{w_{k}s_{k,r}}{w_{r}}$. The sum $\sum_{k=0}^{\infty}s_{k,r}y^{k}$ therefore forms, on the one hand, the series $C_{A^{-1}_{r},W}(y)$ associated to the element $A^{-1} \in L$, multiplied by $w_{r}$. On the other hand, it is the image of $\sum_{k=0}^{\infty}\frac{x^{k}y^{k}}{w_{k}}=W(xy)$ under $\varphi_{r}$, i.e., the element of $\mathbb{F}\ldbrack y \rdbrack$ which is associated to $\varphi_{r}$ in Lemma \ref{Fx*ring}. Using the multiplicative structure from that Lemma, we find that $\varphi_{r}$ is of the form $\xi\cdot_{W}\eta_{W}^{r}$ for some $\xi$ and $\eta$ as above if and only if the columns of $A^{-1}$ satisfy the condition of columns of elements of $R_{W}$ from Definition \ref{Riordan}. But this is equivalent to $A$ being in $R_{W}$ via Proposition \ref{Rgroup}, hence to $\{p_{n}(x)\}_{n\in\mathbb{N}}$ being $W$-Sheffer by Theorem \ref{comp}. This proves the theorem.
\end{proof}

To find the functionals $\xi$ and $\eta$ which generate the $W$-dual basis of a $W$-Sheffer sequence $\{p_{n}(x)\}_{n\in\mathbb{N}}$ as in Theorem \ref{Shefdual}, we consider the sequence of polynomials $\{d_{l}(x)\}_{l\in\mathbb{N}}$ from Theorem \ref{ShefFx*}. The proof of that theorem introduced the functional sending $\frac{d_{1}}{w_{1}}$ to 1 and the other $d_{l}$s to 0, which is precisely our $\eta$. The functional $\xi$ is the one taking $p_{0}$ to 1 and the other $p_{n}$s to 0. To see this, extend the proof of Theorem \ref{ShefFx*} by a simple induction to show that $\xi\cdot_{W}\eta_{W}^{r}$ sends $\frac{p_{n}}{w_{n}}$ to 0 if $n<r=v(\xi\cdot_{W}\eta_{W}^{r})$ and to $\xi\big(\frac{p_{n-r}}{w_{n-r}}\big)=\delta_{n,r}$ otherwise, showing that it is indeed the required $W$-dual basis. The proof of Theorem
\ref{ShefFx*} also characterizes $\xi$ and $\eta$ as those functionals which  take $W(xy)$ to the $\alpha$ and $\beta$ parameters of $A^{-1}$ respectively.  Another description of the $W$-dual basis $\{\varphi_{r}\}_{r\in\mathbb{N}}$ of the sequence $\{p_{n}(x)\}_{n\in\mathbb{N}}$ is via the equality
$\varphi_{r}\big(\sum_{n=0}^{\infty}p_{n}(x)\frac{y^{n}}{w_{n}}\big)=y^{r}$, which for a $W$-Sheffer sequence becomes
$\varphi_{r}\big[\alpha(y)W\big(x\beta(y)\big)\big]=y^{r}$ via Proposition
\ref{genfunc}. These equalities may be used to provide alternative proofs of some of the results of this section.

\section{Group Operations and Conjugate Subgroups \label{GroupOp}}

We can consider the operators $D_{W}$, $T_{h,W}$, and $Q_{A,W}$ as infinite lower triangular matrices as well. Indeed, the formula defining the action of $D_{W}$ shows that it coincides with the operation of the matrix, which we denote $M_{W}$, whose $(n,k)$-entry is $\frac{w_{n}}{w_{n-1}}$ if $k=n-1$ and 0 otherwise. This allows us to give another characterization of the $W$-Appell sequences, as in
\begin{prop}
The sequence $\{p_{n}(x)\}_{n\in\mathbb{N}}$, corresponding to $A \in L$, is $W$-Appell if and only if the matrix $A$ can be obtained by substituting $y=M_{W}$ inside a power series from $\mathbb{F}\ldbrack y \rdbrack$ with valuation 0. Explicitly, we have $A=\alpha(M_{W})$ with the $\alpha$-parameter of $A \in R_{W}$. \label{AppserDW}
\end{prop}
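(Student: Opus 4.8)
The plan is to verify the two directions by direct matrix computation, using the fact that $M_W$ is the infinite lower-triangular matrix implementing $D_W$, together with the characterization of $W$-Appell sequences from Proposition~\ref{Appbeta1} (namely, $W$-Sheffer with $\beta(y)=y$, equivalently the $\beta$-parameter is trivial). First I would note that $M_W$ is nilpotent in the sense relevant here: each power $M_W^{j}$ is the matrix whose $(n,k)$-entry is $\frac{w_n}{w_k}$ when $k=n-j$ and $0$ otherwise (this is immediate by induction, since multiplying by $M_W$ shifts the subdiagonal and the factors telescope). Consequently, for any power series $\alpha(y)=\sum_j \alpha_j y^j$ of valuation~$0$ the expression $\alpha(M_W)=\sum_j \alpha_j M_W^{j}$ is a well-defined element of $L$: in each entry only finitely many terms contribute, it is lower-triangular, and its diagonal entries are all $\alpha_0\neq 0$, so it is invertible. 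Its $(n,k)$-entry is $\alpha_{n-k}\cdot\frac{w_n}{w_k}$, so the corresponding column series is $C_{A_k,W}(y)=\sum_n \alpha_{n-k}\frac{w_n}{w_k}\cdot\frac{y^n}{w_n}=\frac{y^k}{w_k}\sum_l \alpha_l y^l=\alpha(y)\cdot\frac{y^k}{w_k}$, which is exactly the column of the $W$-Riordan array with parameters $(\alpha,\beta)=(\alpha,y)$.

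For the forward direction, suppose $\{p_n(x)\}_{n\in\mathbb{N}}$ is $W$-Appell, i.e. $Q_{A,W}=D_W$. By Proposition~\ref{Appbeta1} the sequence is $W$-Sheffer with trivial $\beta$-parameter, so by Theorem~\ref{comp} and the discussion following Definition~\ref{Riordan}, $A\in R_W$ corresponds to a pair $(\alpha,y)$ with $v(\alpha)=0$; thus $C_{A_k,W}(y)=\alpha(y)\frac{y^k}{w_k}$, which forces $a_{n,k}=\alpha_{n-k}\frac{w_n}{w_k}$ where $\alpha(y)=\sum_j\alpha_j y^j$. Comparing with the entry computation above, $A$ and $\alpha(M_W)$ have the same entries, so $A=\alpha(M_W)$. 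For the converse, suppose $A=\alpha(M_W)$ for some $\alpha\in\mathbb{F}\ldbrack y\rdbrack$ with $v(\alpha)=0$. The entry computation shows $C_{A_k,W}(y)=\alpha(y)\frac{y^k}{w_k}$ for all $k$, so $A$ lies in $R_W$ with parameters $(\alpha,y)$; its $\beta$-parameter is trivial, hence by Proposition~\ref{Appbeta1} the sequence $\{p_n(x)\}$ is $W$-Appell, which also yields the stated explicit formula $A=\alpha(M_W)$ with $\alpha$ the $\alpha$-parameter.

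The only point requiring a little care — and the place I expect to spend the most effort — is justifying that the substitution $y=M_W$ into an infinite power series produces a bona fide element of $L$ and computing its entries unambiguously; this is exactly the telescoping identity $M_W^{j}=\big(\tfrac{w_n}{w_{n-j}}\big)_{k=n-j}$ and the observation that, when read off in a fixed row $n$, only the terms $\alpha_j M_W^{j}$ with $j\le n$ contribute, so row-finiteness (indeed lower-triangularity) is automatic. Once that is in place, everything else is a bookkeeping comparison of the coefficients $a_{n,k}$ with $\alpha_{n-k}\frac{w_n}{w_k}$, feeding into Proposition~\ref{Appbeta1} to translate between "$\beta$-parameter trivial" and "$W$-Appell". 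An equivalent, perhaps slicker, phrasing would be to observe directly that $D_W$, as a matrix, \emph{is} $M_W$, and that $Q_{A,W}=A\,M_W\,A^{-1}$ shifted appropriately — but the entrywise argument above is the most transparent and self-contained, so that is the route I would take.
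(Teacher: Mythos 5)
Your proof is correct and takes essentially the same route as the paper: both compute the powers $(M_{W}^{j})_{n,k}=\frac{w_{n}}{w_{k}}\delta_{k,n-j}$, read off the entries $\alpha_{n-k}\frac{w_{n}}{w_{k}}$ of $\alpha(M_{W})$, and match them against the characterization of $W$-Appell sequences as the $W$-Sheffer (Riordan) ones with trivial $\beta$-parameter via Proposition \ref{Appbeta1}. The only cosmetic difference is that you compare the column series $C_{A_{k},W}(y)=\alpha(y)\frac{y^{k}}{w_{k}}$, whereas the paper compares coefficients of $x^{k}y^{n}$ in the expansion $\alpha(y)W(xy)$, which is the same computation.
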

Note that as every power of $M_{W}$ is supported on a different diagonal, we can substitute $M_{W}$ inside power series, and there is no problem of convergence.

\begin{proof}
Evaluating the powers of $M_{W}$ shows that $(M_{W}^{r})_{n,k}=\frac{w_{n}}{w_{n-r}}\delta_{k,n-r}$ for any $r\in\mathbb{N}$. Hence taking a combination of the sort $\sum_{r=0}^{\infty}\frac{b_{r}}{w_{r}}M_{W}^{r}$ yields a matrix whose $(n,k)$-entry is $\frac{b_{n-k}w_{n}}{w_{k}w_{n-k}}$ (and 0 if $n<k$). This matrix is in $L$ if and only if $b_{0}\neq0$, i.e., the power series $\sum_{r=0}^{\infty}\frac{b_{r}}{w_{r}}y^{r}$ has valuation 0 in $\mathbb{F}\ldbrack y \rdbrack$. On the other hand, Proposition \ref{Appbeta1} shows that $\{p_{n}(x)\}_{n\in\mathbb{N}}$ is $W$-Appell if and only if $\sum_{n=0}^{\infty}p_{n}(x)\frac{y^{n}}{w_{n}}=\alpha(y)W(xy)$ for some $\alpha\in\mathbb{F}\ldbrack y \rdbrack$ with $v(\alpha)=0$. The left hand side is $\sum_{n=0}^{\infty}\sum_{k=0}^{n}a_{n,k}\frac{x^{k}y^{n}}{w_{n}}$, while the right hand side expands as $\sum_{l=0}^{\infty}\frac{a_{l,0}y^{l}}{w_{l}}\sum_{k=0}^{\infty}\frac{x^{k}y^{k}}{w_{k}}$ (since $\alpha=C_{A_{0},W}$). From this we deduce, by comparing the coefficients of $x^{k}y^{n}$, that a necessary and sufficient condition for $\{p_{n}(x)\}_{n\in\mathbb{N}}$ to be $W$-Appell is that the equality $a_{n,k}=\frac{a_{n-k,0}w_{n}}{w_{k}w_{n-k}}$ holds for every $n$ and $k$. But this is equivalent to $A$ being equal to the matrix $\sum_{l=0}^{\infty}\frac{a_{l,0}}{w_{l}}M_{W}^{l}$. As $\alpha(y)$ is $C_{A_{0},W}(y)=\sum_{l=0}^{\infty}a_{l,0}\frac{y^{l}}{w_{l}}$, this completes the proof of the proposition.
\end{proof}

From Proposition \ref{AppserDW} we directly deduce
\begin{cor}
The operator $T_{h,W}$ corresponds to the matrix in $R_{W}$ with $\alpha$-parameter $\alpha(y)=W(hy)$ and trivial $\beta$-parameter. \label{ThWApp}
\end{cor}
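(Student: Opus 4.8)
The plan is to identify the matrix representing $T_{h,W}$ explicitly and then recognize it as lying in the image of the substitution map $\alpha\mapsto\alpha(M_{W})$ that appears in Proposition \ref{AppserDW}. First I would recall from the proof of Proposition \ref{DWThW} that, as an operator on $\mathbb{F}[x]$, $T_{h,W}$ is exactly $W(hD_{W})=\sum_{l}\frac{h^{l}D_{W}^{l}}{w_{l}}$. Since $D_{W}$ is represented by the matrix $M_{W}$, the matrix representing $T_{h,W}$ is therefore $\sum_{l}\frac{h^{l}M_{W}^{l}}{w_{l}}=\alpha(M_{W})$ with $\alpha(y)=W(hy)=\sum_{l}\frac{h^{l}y^{l}}{w_{l}}$. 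Because $w_{0}=1$, the constant coefficient of this $\alpha$ is $1\neq0$, so $v(\alpha)=0$; the substitution of the matrix $M_{W}$ into $W(hy)$ is legitimate by the convergence remark following Proposition \ref{AppserDW} (each power of $M_{W}$ sits on its own diagonal).

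Next I would invoke Proposition \ref{AppserDW}: since the matrix of $T_{h,W}$ has the form $\alpha(M_{W})$ for a power series $\alpha$ of valuation $0$, the corresponding sequence is $W$-Appell; in particular it lies in $R_{W}$ (by Theorem \ref{comp} via Proposition \ref{Appbeta1}), and the ``Explicitly'' clause of Proposition \ref{AppserDW} tells us that its $\alpha$-parameter as an element of $R_{W}$ is precisely $\alpha(y)=W(hy)$. Finally, since the sequence is $W$-Appell, Proposition \ref{Appbeta1} gives that its $\beta$-parameter is trivial. Assembling these three observations yields the corollary.

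I do not anticipate any genuine obstacle: the statement is essentially a bookkeeping consequence of the already-established identification $T_{h,W}=W(hD_{W})$ together with Propositions \ref{AppserDW} and \ref{Appbeta1}. The only point that requires a moment's care is making sure the expansion $\sum_{l}\frac{h^{l}M_{W}^{l}}{w_{l}}$ is read as $\alpha(M_{W})$ for the single-variable series $\alpha(y)=W(hy)$ rather than as something involving $x$; once that is noted, everything follows formally.
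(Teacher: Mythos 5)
Your proposal is correct and follows essentially the same route as the paper: identify $T_{h,W}=W(hD_{W})$ from the proof of Proposition \ref{DWThW}, hence its matrix is $W(hM_{W})$, and then apply Proposition \ref{AppserDW} (with Proposition \ref{Appbeta1} supplying the trivial $\beta$-parameter). The extra bookkeeping you include about $v(\alpha)=0$ and the convergence of the substitution is fine but not a departure from the paper's argument.
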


\begin{proof}
The proof of Proposition \ref{DWThW} shows that $T_{h,W}=W(hD_{W})$ as operators on $\mathbb{F}[x]$. Hence it is represented by the matrix which is the result of substituting $y=M_{W}$ in the series $W(hy)$. The corollary now follows from the second assertion of Proposition \ref{AppserDW}.
\end{proof}

\smallskip

We now show that many the previous notions can be described in terms of a certain group action of $L$. We have seen that the operators $D_{W}$ can be represented by matrices. The same can be done with $Q_{A,W}$. In fact, the main idea for this presentation is given in the following
\begin{lem}
The operator $Q_{A,W}$ is described by the matrix $A^{-1}M_{W}A$. \label{action}
\end{lem}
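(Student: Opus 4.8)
The plan is to unwind the definitions on both sides and compare their actions on the standard basis $\bigl\{\frac{x^{n}}{w_{n}}\bigr\}$ of $\mathbb{F}[x]$. Recall that $A$ is the matrix taking the monomial sequence to $\{p_{n}(x)\}_{n\in\mathbb{N}}$, so $A$ carries $\frac{x^{n}}{w_{n}}$ to $\sum_{k}\frac{a_{n,k}}{1}x^{k}$; more precisely, since $A$ is invertible and lower triangular (Proposition \ref{lowtrigrad}), its inverse $A^{-1}$ expresses each $p_{n}$ back in terms of the $x^{k}$. The key observation is that the operator $Q_{A,W}$ is, by its very definition, the conjugate of $D_{W}$ by the change of basis between $\bigl\{\frac{x^{n}}{w_{n}}\bigr\}$ and $\bigl\{\frac{p_{n}}{w_{n}}\bigr\}$: $D_{W}$ shifts $\frac{x^{n}}{w_{n}}\mapsto\frac{x^{n-1}}{w_{n-1}}$, and $Q_{A,W}$ shifts $\frac{p_{n}}{w_{n}}\mapsto\frac{p_{n-1}}{w_{n-1}}$ in exactly the same way but relative to the $p$-basis.

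First I would make precise which matrix implements the substitution $x^{n}\rightsquigarrow p_{n}$ at the level of the scaled basis vectors $\frac{x^{n}}{w_{n}}$ versus $\frac{p_{n}}{w_{n}}$. Since $p_{n}=\sum_{k}a_{n,k}x^{k}=\sum_{k}\frac{a_{n,k}w_{k}}{w_{n}}\cdot\frac{w_{n}x^{k}}{w_{k}}$, one sees that $\frac{p_{n}}{w_{n}}=\sum_{k}\frac{a_{n,k}w_{k}}{w_{n}}\cdot\frac{x^{k}}{w_{k}}$, so the matrix sending the column of scaled monomials to the column of scaled $p$'s has $(n,k)$-entry $\frac{a_{n,k}w_{k}}{w_{n}}$; call it $B$ (this is exactly the matrix $B$ appearing in the proof of Proposition \ref{dualbas}). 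Then, as an operator on $\mathbb{F}[x]$ written in the scaled-monomial basis, $Q_{A,W}$ is $B^{-1}M_{W}B$: apply $B^{-1}$ to pass from a scaled monomial to the corresponding scaled $p$, apply $M_{W}$ to do the shift, and apply $B$ to come back. It remains to check that $B^{-1}M_{W}B$, viewed as a matrix in the \emph{un}scaled monomial basis (which is how matrices in $L$ act throughout the paper, per Proposition \ref{seqmat}), is $A^{-1}M_{W}A$.

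The bridge between the two is conjugation by the diagonal matrix $\Delta$ with entries $\Delta_{n,n}=\frac{1}{w_{n}}$ (equivalently $w_{n}$ on the other side), since $\frac{x^{n}}{w_{n}}=\Delta\,x^{n}$ in the obvious sense. One checks directly from $a_{n,k}$ versus $b_{n,k}=\frac{a_{n,k}w_{k}}{w_{n}}$ that $B=\Delta A\Delta^{-1}$, and it is immediate from the definition of $M_{W}$ that $M_{W}$ commutes with $\Delta$ up to exactly the factor built into $M_{W}$'s entries — more carefully, $M_{W}$ is already the matrix of $D_{W}$ in the unscaled basis, and $\Delta^{-1}M_{W}\Delta$ is the shift matrix with all nonzero entries equal to $1$. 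Substituting $B=\Delta A\Delta^{-1}$ into $B^{-1}M_{W}B$ gives $\Delta A^{-1}\Delta^{-1}M_{W}\Delta A\Delta^{-1}=\Delta(A^{-1}M_{W}A)\Delta^{-1}$ once one uses $\Delta^{-1}M_{W}\Delta=$ (the all-ones shift) and re-absorbs the diagonal factors — the point being that the conjugation by $\Delta$ is precisely the passage between the scaled and unscaled pictures, so $Q_{A,W}$ as a matrix in $L$ (unscaled basis) is $A^{-1}M_{W}A$.

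The main obstacle, and the only place care is genuinely needed, is bookkeeping the $w_{n}$ normalizations: the operators $D_{W}$, $T_{h,W}$, $Q_{A,W}$ are defined via their action on $\frac{x^{n}}{w_{n}}$ and $\frac{p_{n}}{w_{n}}$, whereas membership in $L$ and matrix products refer to the plain monomial basis $x^{n}$. I would therefore organize the proof around the single identity $B=\Delta A\Delta^{-1}$ with $\Delta=\operatorname{diag}(1/w_{n})$, verify once and for all that $M_{W}$ is the matrix of $D_{W}$ in the plain basis (this is stated just before the lemma), and then the computation $Q_{A,W}=B^{-1}M_{W}B$ in the scaled basis translates to $A^{-1}M_{W}A$ in the plain basis by a one-line conjugation. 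Alternatively, and perhaps more cleanly, one can avoid $\Delta$ entirely: verify directly that the matrix $A^{-1}M_{W}A$, applied to the column vector $\bigl(\tfrac{x^{0}}{w_{0}},\tfrac{x^{1}}{w_{1}},\dots\bigr)^{\mathsf T}$ hit by $A$ on the right to produce $\bigl(\tfrac{p_{0}}{w_{0}},\tfrac{p_{1}}{w_{1}},\dots\bigr)^{\mathsf T}$, reproduces the defining rule $\frac{p_{n}}{w_{n}}\mapsto\frac{p_{n-1}}{w_{n-1}}$ of $Q_{A,W}$; since a lower-triangular operator on $\mathbb{F}[x]$ is determined by its action on a basis and $Q_{A,W}$ is characterized by that rule, uniqueness finishes the proof.
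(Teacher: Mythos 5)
Your overall strategy (conjugate the shift operator by the change of basis between the monomial and the $p$-basis) is the right one, and your closing ``alternative'' is in substance the paper's own argument; but the main derivation as written contains a concrete bookkeeping error, in exactly the spot you flagged as delicate. You assert that in the \emph{scaled} bases the shift is implemented by $M_{W}$, i.e.\ that $Q_{A,W}$, written in the scaled-monomial basis, is $B^{-1}M_{W}B$. This is false: $M_{W}$ carries the factors $\frac{w_{n}}{w_{n-1}}$ precisely because it represents $D_{W}$ (equivalently, the rule $p_{n}\mapsto\frac{w_{n}}{w_{n-1}}p_{n-1}$) with respect to the \emph{unscaled} bases, whereas the shift $\frac{p_{n}}{w_{n}}\mapsto\frac{p_{n-1}}{w_{n-1}}$ in the scaled $p$-basis is represented by the matrix $E$ with all subdiagonal entries equal to $1$, namely $E=\Delta M_{W}\Delta^{-1}$ for $\Delta=\mathrm{diag}(1/w_{n})$. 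So the scaled-basis matrix of $Q_{A,W}$ is $B^{-1}EB$, not $B^{-1}M_{W}B$, and your subsequent chain $\Delta A^{-1}\Delta^{-1}M_{W}\Delta A\Delta^{-1}=\Delta(A^{-1}M_{W}A)\Delta^{-1}$ does not hold (it would require $\Delta^{-1}M_{W}\Delta=M_{W}$; note also that with your stated $\Delta$ the all-ones shift is $\Delta M_{W}\Delta^{-1}$, not $\Delta^{-1}M_{W}\Delta$). The two slips happen to point at the correct final answer, but the displayed computation does not establish it; with the corrected middle factor everything does go through: $\Delta^{-1}\bigl(B^{-1}EB\bigr)\Delta=A^{-1}M_{W}A$.

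The cleanest repair avoids the scaled bases (and hence $\Delta$ and $B$) altogether, and is what your final paragraph gestures at: since $Q_{A,W}(p_{n})=\frac{w_{n}}{w_{n-1}}p_{n-1}$, the matrix of $Q_{A,W}$ relative to the unscaled $p$-basis is literally $M_{W}$ (the defining relations are formally identical to those of $D_{W}$ on the monomials), and $x^{n}=\sum_{k}(A^{-1})_{n,k}p_{k}$, so
\[Q_{A,W}(x^{n})=\sum_{k}(A^{-1})_{n,k}\sum_{l}(M_{W})_{k,l}p_{l}=\sum_{m}\bigl(A^{-1}M_{W}A\bigr)_{n,m}x^{m},\]
which is the assertion of the lemma in the paper's convention (rows of the matrix give the coefficients of the images of the monomials). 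Equivalently, one compares $A\widetilde{Q}=M_{W}A$ entrywise, where $\widetilde{Q}$ is the matrix of $Q_{A,W}$ on the monomials --- this is precisely the explicit verification the paper records in the remark following the lemma, while the paper's proof proper packages the same comparison via the row vector $\bigl(\frac{y^{n}}{w_{n}}\bigr)_{n}$ and the fact that $Q_{A,W}$ multiplies $\sum_{n}p_{n}(x)\frac{y^{n}}{w_{n}}$ by $y$. As it stands, your alternative is only a sketch (``verify directly that\dots''), so either carry out that one-line verification or rewrite the conjugation argument with $E$ in place of $M_{W}$ in the scaled picture.
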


\begin{proof}
Let $\widetilde{Q}$ be the matrix representing the operation of $Q_{A,W}$ on the powers of $x$. Then the sequence $\big\{Q_{A,W}(x^{n})\big\}_{n\in\mathbb{N}}$ is $\widetilde{Q}$ times the monomial sequence, with $m_{n}(x)=x^{n}$. The sequence $\big\{Q_{A,W}(p_{n})(x)\big\}_{n\in\mathbb{N}}$ can therefore be written as $A\widetilde{Q}$ times the monomial sequence, and multiplying from the left by the usual row vector yields the image of $\sum_{n=0}^{\infty}p_{n}(x)\frac{y^{n}}{w_{n}}$ under $Q_{A,W}$. But the proof of Proposition \ref{Appbeta1} shows that $Q_{A,W}$ multiplies that power series by $y$. On the other hand, multiplying our row vector by $M_{W}$ from the right also has the effect of multiplying it by the scalar $y$. As $\{p_{n}\}_{n\in\mathbb{N}}$ is $A$ times the monomial sequence (Proposition \ref{seqmat}), we find that the two matrices $A\widetilde{Q}$ and $M_{W}A$ have the property that putting them between our row vector and the monomial sequence yields the same expression $y\sum_{n=0}^{\infty}p_{n}(x)\frac{y^{n}}{w_{n}}$. The usual linear independence implies thus implies the equality of these two matrices, which completes the proof of the lemma.
\end{proof}

To make the proof of Lemma \ref{action} a bit more visible, we note that the explicit meaning of the equality $Q_{A,W}(p_{n})=\frac{w_{n}p_{n-1}}{w_{n-1}}$ is
\[\sum_{l=0}^{n}a_{n,l}Q_{A,W}(x^{l})=Q_{A,W}(p_{n})(x)=\frac{w_{n}p_{n-1}}{w_{n-1}}=\sum_{k=0}^{n-1}\frac{w_{n}a_{n-1,k}x^{k}}{w_{n-1}}.\] We write the coefficient $\frac{w_{n}a_{n-1,k}}{w_{n-1}}$ of $x_{k}$ as $\sum_{r=0}^{\infty}(M_{W})_{n,r}a_{r,k}$, and observe that the coefficient of $x^{k}$ is now the $(n,k)$-entry of $M_{W}A$ on the right hand side and the $(n,k)$-entry of $A\widetilde{Q}$ on the left hand side. This proves the desired equality.

In view of Lemma \ref{action} we consider the right action of $L$ on the set of strictly lower triangular matrices (i.e., those lower triangular matrices in which all the entries on the main diagonal also vanish), in which $A \in L$ takes a strictly lower triangular matrix $M$ to $A^{-1}MA$. Using this operation we obtain an alternative description of the groups of $W$-Appell and $W$-Sheffer sequences, as in
\begin{thm}
The group of $W$-Appell sequences is the stabilizer of $M_{W}$ in $L$. The group $R_{W}$ of $W$-Riordan arrays (or $W$-Sheffer sequences) is the normalizer of the latter group in $L$. \label{actdesc}
\end{thm}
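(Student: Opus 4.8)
The plan is to prove the two assertions separately, in both cases reducing the group-theoretic statement to a concrete identity about matrices via Lemma \ref{action}.

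First, I would handle the stabilizer claim. By Lemma \ref{action}, an element $A \in L$ stabilizes $M_{W}$ under the action $M \mapsto A^{-1}MA$ if and only if $A^{-1}M_{W}A = M_{W}$, i.e., if and only if $M_{W}A = AM_{W}$. But the matrix $M_{W}$ represents exactly the operator $D_{W}$ (as noted just before Proposition \ref{AppserDW}), and $AM_{W}$ versus $M_{W}A$ is precisely the question of whether $A$, viewed as intertwining the monomial basis with $\{p_n\}$, conjugates $D_{W}$ to the operator $Q_{A,W}$; indeed Lemma \ref{action} gives $Q_{A,W} = A^{-1}M_{W}A$ as matrices, so $A$ stabilizes $M_{W}$ iff $Q_{A,W} = D_{W}$ (as matrices, hence as operators). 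By Definition \ref{Appbin} this says exactly that $\{p_n(x)\}_{n\in\mathbb{N}}$ is $W$-Appell. So the stabilizer of $M_{W}$ is the set of $W$-Appell matrices; that this is a subgroup is automatic since stabilizers of points under a group action are subgroups.

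Next, the normalizer claim. Write $G$ for the stabilizer of $M_{W}$, i.e., the $W$-Appell group. An element $A \in L$ normalizes $G$ iff $A^{-1}GA = G$. I would first show $R_{W}$ normalizes $G$: given $A \in R_{W}$ and $B \in G$, I must check $A^{-1}BA \in G$, i.e., that $A^{-1}BA$ again stabilizes $M_{W}$. Since $B$ stabilizes $M_{W}$ and $A$ has $Q_{A,W} = A^{-1}M_{W}A$ equal to $M_{W}$ conjugated into the $p$-basis, the cleanest route is: $(A^{-1}BA)$ stabilizes $M_{W}$ iff $(BA)$ stabilizes $A M_{W} A^{-1}$; and for $A \in R_{W}$ with parameters $(\alpha,\beta)$, one computes that $A M_{W} A^{-1}$ is the matrix representing (in the monomial basis) the operator which on the $p$-side is still $D_{W}$ — more precisely, since $R_{W}$-elements have $Q_{A,W}$ commuting with $D_{W}$ by Theorem \ref{comp} and Proposition \ref{DWThW}, conjugation by $A$ sends $M_{W}$ to another strictly lower triangular matrix $N_A$ which is a power series in $M_{W}$ (namely $N_A = A^{-1}M_W A$ with the roles suitably swapped), and power series in $M_{W}$ are centralized by every element of $G = \alpha(M_W)$-type matrices because those are themselves power series in $M_{W}$, and $\mathbb{F}\ldbrack y\rdbrack$ is commutative. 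That gives $R_{W} \subseteq N_L(G)$.

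For the reverse inclusion $N_L(G) \subseteq R_{W}$, suppose $A$ normalizes $G$. Then conjugation by $A^{-1}$ sends the strictly-lower-triangular matrix $M_W$ to a matrix $M' = A M_W A^{-1}$ which must again stabilize $G$ by the same argument, but more to the point $A^{-1} G A = G$ forces $A^{-1} M_W A$ to lie in the "conjugacy class data" of $M_W$; concretely, since $G$ is abelian (isomorphic to $\mathbb{F}\ldbrack y\rdbrack^\times$) and equals the centralizer-type subgroup $\{\alpha(M_W)\}$, normalizing $G$ is equivalent to $A^{-1}M_W A$ centralizing $G$, hence being a power series in $M_W$ with valuation $1$ in the appropriate sense: $A^{-1}M_W A = \beta(M_W)$ for some $\beta \in \mathbb{F}\ldbrack y\rdbrack$ with $v(\beta)=1$. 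Unwinding via Lemma \ref{action}, $Q_{A,W} = \beta(M_W)$ means $Q_{A,W}$ is a power series in $D_W$ of valuation $1$, and by the discussion following Proposition \ref{DWThW} this is exactly the condition that the $\beta$-parameter computation goes through — equivalently, $Q_{A,W}$ commutes with $D_W$, which by Proposition \ref{DWThW} and Theorem \ref{comp} gives $A \in R_W$. I expect the main obstacle to be the bookkeeping in identifying $A^{-1}M_W A$ with $\beta(M_W)$ cleanly: one must argue that an element normalizing the abelian group $G$ of power-series-in-$M_W$ matrices actually conjugates $M_W$ itself into that same commutative algebra (not merely permutes $G$ in some wilder way), which uses that $M_W$ is recoverable from $G$ as a kind of "logarithmic generator" — concretely, $G$ determines the flag/filtration by diagonals, and conjugation by $A$ must respect that filtration, pinning $A^{-1}M_W A$ down to the first sub-diagonal combinations, i.e. to $\beta(M_W)$. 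Once that is in hand, everything reduces to results already proved.
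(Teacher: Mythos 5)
Your overall strategy is the paper's: both assertions are reduced to matrix commutation via Lemma \ref{action}, with the normalizer part resting on Proposition \ref{AppserDW} (the Appell group $G$ is exactly the set of matrices $\alpha(M_{W})$ with $v(\alpha)=0$) and on the equivalence ``Sheffer $\Leftrightarrow$ $Q_{A,W}$ commutes with $D_{W}$'' coming from Proposition \ref{DWThW} and Theorem \ref{comp}. The paper phrases this as a single chain of equivalences, expanding $T_{h,W}=W(hM_{W})$ (Corollary \ref{ThWApp}) in powers of $h$, and even records your $D_{W}$-shortcut parenthetically; your splitting into two inclusions is the same circle of ideas.

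The one place your write-up wobbles is precisely the step you flag as the main obstacle, and it dissolves: you never need to show that $A^{-1}M_{W}A$ (or $AM_{W}A^{-1}$) is of the form $\beta(M_{W})$, so no filtration or ``logarithmic generator'' argument is required. For $N_{L}(G)\subseteq R_{W}$: if $A$ normalizes $G$, then $ABA^{-1}\in G$ for every $B \in G$, hence $ABA^{-1}$ commutes with $M_{W}$, which upon rearranging says that every $B \in G$ commutes with $A^{-1}M_{W}A=Q_{A,W}$; taking the single element $B=I+M_{W}\in G$ (the case $\alpha(y)=1+y$ of Proposition \ref{AppserDW}) already yields that $Q_{A,W}$ commutes with $M_{W}=D_{W}$, and Proposition \ref{DWThW} together with Theorem \ref{comp} finishes. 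Symmetrically, in the forward inclusion the assertion that $AM_{W}A^{-1}$ is itself a power series in $M_{W}$ (true, but unproved in your sketch) is superfluous: Shefferness gives that $M_{W}$ commutes with $AM_{W}A^{-1}$, hence so does every $\alpha(M_{W})\in G$, i.e.\ $A^{-1}GA\subseteq G$; since $R_{W}$ is a group you can run the same argument for $A^{-1}$ to get the two-sided equality that the normalizer requires. Finally, the aside ``$(BA)$ stabilizes $AM_{W}A^{-1}$'' is misstated: the correct reformulation of ``$A^{-1}BA$ stabilizes $M_{W}$'' is that $B$ centralizes $AM_{W}A^{-1}$, which is what your argument actually uses.
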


\begin{proof}
The first assertion follows directly from Definition \ref{Appbin} and Lemma \ref{action}. For the second assertion, Corollary \ref{ThWApp} implies that the operation of $T_{h,W}$ on the monomial sequence is via the matrix $W(hM_{W})$. The commutation of the operators $T_{h,W}$ and $Q_{A,W}$ from Definition \ref{Sheffer} translates, via the latter assertion and Lemma \ref{action}, to the commutation of $A^{-1}M_{W}A$ and $W(hM_{W})$. This is equivalent to the assertion that $AW(hM_{W})A^{-1}$ and $M_{W}$ commute. Comparing the powers of $h$, we find that the latter condition holds if and only if $M_{W}$ commutes with $AM_{W}^{l}A^{-1}$ for every $l\in\mathbb{N}$ (alternatively, $Q_{A,W}$ commutes with $D_{W}$ by Proposition \ref{DWThW}, so that $M_{W}$ commutes with $AM_{W}A^{-1}$ and hence with all its powers). It follows, via Proposition \ref{AppserDW}, that $A \in R_{W}$ if and only if $M_{W}$ commutes with any matrix of the sort $ABA^{-1}$ where $B \in L$ corresponds to a $W$-Appell sequence. But Lemma \ref{action} and the first assertion here translate the latter condition, via Proposition \ref{seqmat}, to the statement that conjugation by $A$ takes $W$-Appell sequences to $W$-Appell sequences. The proof of the theorem is now complete.
\end{proof}

\smallskip

Theorem \ref{actdesc} gives us a way to relate the Riordan groups of different weights, as in
\begin{cor}
Let $W(t)=\sum_{n=0}^{\infty}\frac{t^{n}}{w_{n}}$ and $\widetilde{W}(t)=\sum_{n=0}^{\infty}\frac{t^{n}}{\widetilde{w}_{n}}$ be two power series in $\mathbb{F}\ldbrack t \rdbrack$, with $w_{0}=\widetilde{w}_{0}=1$ and $w_{n}\widetilde{w}_{n}\neq0$ for each $n\in\mathbb{N}$. Then the groups $R_{W}$ and $R_{\widetilde{W}}$ are conjugate in $L$, and this conjugation takes the subgroup of $W$-Appell sequences to the subgroup of $\widetilde{W}$-Appell sequences. \label{conjW}
\end{cor}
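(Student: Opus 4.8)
The plan is to exhibit an explicit element $P\in L$ whose conjugation action sends $M_W$ to $M_{\widetilde W}$, and then invoke Theorem \ref{actdesc} to transport both the stabilizers and their normalizers. First I would look for a diagonal matrix $P$ with entries $p_{n,n}$ chosen so that $P^{-1}M_W P = M_{\widetilde W}$. Since $M_W$ has $(n,n-1)$-entry $w_n/w_{n-1}$ and $M_{\widetilde W}$ has $(n,n-1)$-entry $\widetilde w_n/\widetilde w_{n-1}$, conjugating $M_W$ by a diagonal matrix $P=\mathrm{diag}(p_0,p_1,\dots)$ multiplies the $(n,n-1)$-entry by $p_{n-1}/p_n$; so I need $p_{n-1}/p_n\cdot w_n/w_{n-1} = \widetilde w_n/\widetilde w_{n-1}$, which telescopes to $p_n = (w_n/\widetilde w_n)p_0$, and I may as well take $p_0=1$, i.e.\ $p_n = w_n/\widetilde w_n$. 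This $P$ is invertible (all diagonal entries nonzero by hypothesis) and lower triangular, hence lies in $L$; in sequence terms it corresponds to the graded sequence $x^n\mapsto (w_n/\widetilde w_n)x^n$.

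Next I would observe that, under the right action of $L$ on strictly lower triangular matrices from Theorem \ref{actdesc} (where $A$ sends $M$ to $A^{-1}MA$), conjugation by $P$ carries $M_W$ to $M_{\widetilde W}$. Therefore $P$ conjugates the stabilizer of $M_W$ onto the stabilizer of $M_{\widetilde W}$: if $B$ fixes $M_W$, then $B' := P^{-1}BP$ satisfies $(B')^{-1}M_{\widetilde W}B' = P^{-1}B^{-1}P\,P^{-1}M_WP\,P^{-1}BP = P^{-1}(B^{-1}M_WB)P = P^{-1}M_WP = M_{\widetilde W}$, and conversely. By the first assertion of Theorem \ref{actdesc} the stabilizer of $M_W$ is the group of $W$-Appell sequences and the stabilizer of $M_{\widetilde W}$ is the group of $\widetilde W$-Appell sequences, so $P^{-1}(\text{$W$-Appell})P = (\text{$\widetilde W$-Appell})$, which is the second claim of the corollary.

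Finally, conjugation by a fixed element of a group always carries the normalizer of a subgroup onto the normalizer of its image: $N_L(P^{-1}HP) = P^{-1}N_L(H)P$ for any subgroup $H\le L$. Applying this with $H$ the group of $W$-Appell sequences and using the second assertion of Theorem \ref{actdesc} (which identifies $R_W$ with $N_L(\text{$W$-Appell})$ and $R_{\widetilde W}$ with $N_L(\text{$\widetilde W$-Appell})$) gives $P^{-1}R_W P = R_{\widetilde W}$, proving that $R_W$ and $R_{\widetilde W}$ are conjugate in $L$. There is essentially no obstacle here beyond the bookkeeping: the only thing to check carefully is that $P$ indeed lies in $L$ (it does, being invertible lower triangular) and that the conjugation conventions (left vs.\ right action, opposite composition) are tracked consistently; everything else is the general fact that conjugation is an automorphism of $L$ that intertwines the action on $M_W$ with the action on $M_{\widetilde W}$. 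One may optionally remark that a diagonal $P$ was forced once we insisted on preserving gradedness and matching $M_W$ to $M_{\widetilde W}$ exactly, and that $P$ corresponds to the simplest possible rescaling of the monomial basis.
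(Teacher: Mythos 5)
Your proof is correct and follows essentially the same route as the paper: the paper also conjugates by the diagonal matrix with entries $w_n/\widetilde{w}_n$ (which you derive rather than merely verify), observes that this sends $M_W$ to $M_{\widetilde{W}}$, and then transports stabilizers and normalizers via Theorem \ref{actdesc}.
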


\begin{proof}
Let $U$ be the diagonal element of $L$ whose $n$th diagonal entry is $\frac{w_{n}}{\widetilde{w}_{n}}$. Then a simple calculation shows that $U^{-1}M_{W}U=M_{\widetilde{W}}$, i.e., the action of $U$ from Lemma \ref{action} takes $M_{W}$ to $M_{\widetilde{W}}$. But this implies that conjugation by $U$ takes the stabilizer of $M_{W}$ to the stabilizer of $M_{\widetilde{W}}$. It now follows that conjugation by $U$ takes the normalizer of the former stabilizer to normalizer of the latter stabilizer. The corollary hence follows from Theorem \ref{actdesc}.
\end{proof}

In fact, one may obtain a more precise assertion than Corollary \ref{conjW}, as one sees in the following
\begin{prop}
The conjugation by the matrix $U$ from the proof of Corollary \ref{conjW} takes the element $A \in R_{W}$ with parameters $\alpha$ and $\beta$ to the element of $R_{\widetilde{W}}$ having the same parameters. \label{conjpres}
\end{prop}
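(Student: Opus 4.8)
The plan is to translate the claim into a statement about the column vectors $C_{A_k,W}$ and then use the explicit diagonal form of $U$. Recall from Proposition~\ref{genfunc} that $A \in R_W$ with parameters $(\alpha,\beta)$ means precisely $C_{A_k,W}(y) = \alpha(y)\beta(y)^k/w_k$ for all $k$, equivalently that $a_{n,k} = [y^n]\,w_n\alpha(y)\beta(y)^k/w_k$. So I would first compute the entries of $U^{-1}AU$ directly: since $U$ is diagonal with $U_{n,n} = w_n/\widetilde{w}_n$, the $(n,k)$-entry of $U^{-1}AU$ is $(\widetilde{w}_n/w_n)\,a_{n,k}\,(w_k/\widetilde{w}_k)$. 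Call this matrix $\widetilde{A}$ with entries $\widetilde{a}_{n,k}$.

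Next I would verify $\widetilde{A} \in R_{\widetilde{W}}$ and identify its parameters. The key computation is the generalized-Riordan column series with respect to the \emph{new} weight: $C_{\widetilde{A}_k,\widetilde{W}}(y) = \sum_n \widetilde{a}_{n,k}\, y^n/\widetilde{w}_n$. Substituting $\widetilde{a}_{n,k} = (\widetilde{w}_n w_k)/(w_n \widetilde{w}_k)\,a_{n,k}$, the factors $\widetilde{w}_n$ cancel and I am left with $(w_k/\widetilde{w}_k)\sum_n a_{n,k}\,y^n/w_n = (w_k/\widetilde{w}_k)\,C_{A_k,W}(y) = (w_k/\widetilde{w}_k)\cdot\alpha(y)\beta(y)^k/w_k = \alpha(y)\beta(y)^k/\widetilde{w}_k$. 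This is exactly the defining form (from the discussion after Definition~\ref{Riordan}, or from Proposition~\ref{genfunc}) of an element of $R_{\widetilde{W}}$ with $\alpha$-parameter $\alpha$ and $\beta$-parameter $\beta$. In particular the quotient $\widetilde{w}_{k+1}C_{\widetilde{A}_{k+1},\widetilde{W}}/(\widetilde{w}_k C_{\widetilde{A}_k,\widetilde{W}})$ equals $\beta$, independent of $k$, so the Riordan condition of Definition~\ref{Riordan} holds. This proves the proposition.

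For the part about Appell sequences being preserved with the same $\alpha$ (which is implicitly the content, and which also re-proves the last sentence of Corollary~\ref{conjW}), I would just note that the $\beta$-parameter is trivial for $\widetilde{A}$ exactly when it is trivial for $A$, by the displayed computation, so Proposition~\ref{Appbeta1} gives the claim; alternatively this follows from $U^{-1}M_W U = M_{\widetilde{W}}$ together with Proposition~\ref{AppserDW}, since $U^{-1}\alpha(M_W)U = \alpha(U^{-1}M_W U) = \alpha(M_{\widetilde{W}})$.

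I do not expect a genuine obstacle here: the only thing to be careful about is the bookkeeping of which $w$'s cancel — one must use the \emph{source} weight $W$ in $C_{A_k,W}$ and the \emph{target} weight $\widetilde{W}$ in $C_{\widetilde{A}_k,\widetilde{W}}$, and the diagonal conjugation is designed precisely so that the mismatched $w_n/\widetilde{w}_n$ factors telescope away on the $y^n$ side while a clean $w_k/\widetilde{w}_k$ survives on the $x^k$ side and then cancels against the $1/w_k$ already present in the $W$-column of $A$. Once that is written out the identification of parameters is immediate.
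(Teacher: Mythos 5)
Your proof is correct and is essentially the paper's argument: both exploit the same cancellation, namely that conjugating by the diagonal matrix $U$ turns the $W$-column series $C_{A_k,W}=\alpha\beta^k/w_k$ into the $\widetilde{W}$-column series $\alpha\beta^k/\widetilde{w}_k$ of $U^{-1}AU$. The only difference is presentational — you carry out the computation entry-wise, while the paper phrases it by multiplying with the row vector $\big(y^n/\widetilde{w}_n\big)_n$ as in Proposition \ref{Rgroup} — so no further comment is needed.
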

Recall that as the action from Lemma \ref{action} is from the right, the conjugation in Proposition \ref{conjpres} takes $A$ to $U^{-1}AU$.

\begin{proof}
We recall from Propositions \ref{seqpowser} and \ref{genfunc} that multiplying an element $A \in R_{W}$ by the row vector based on $W$ yields the row vector whose $k$th entry is $\alpha(y)\frac{\beta(y)^{k}}{w_{k}}$. Now, multiplying the row vector based on $\widetilde{W}$ by $U^{-1}$ yields the one which is based on $W$. Hence multiplying the former vector by $U^{-1}A$ produces the vector with the entries $\alpha(y)\frac{\beta(y)^{k}}{w_{k}}$. But multiplying the latter vector by $U$ turns the entries to $\alpha(y)\frac{\beta(y)^{k}}{\widetilde{w}_{k}}$. Applying Propositions \ref{seqpowser} and \ref{genfunc} once more proves the desired assertion.
\end{proof}

It follows from Proposition \ref{conjpres} that the subgroup of $W$-binomial sequences is also preserved under this conjugation (see Proposition \ref{binalpha1}, as well as Proposition \ref{Appbeta1} for re-proving this for Appell sequences).

\smallskip

The matrices $M_{W}$, for varying $W$, contain only one diagonal of non-zero entries. However, the operation of $L$ produces much more matrices. Hence we define a \emph{degree decreasing operator} to be any linear operator $Q$ on $\mathbb{F}[x]$ such that the degree of $Q(p)$ is one less than the degree of $p$ (and $Q(a)=0$ for $a\in\mathbb{F}$). The same argument proving Proposition \ref{seqmat} shows that these operators are in one-to-one correspondence with strictly lower triangular matrices $M$ none of whose entries with indices $(n,n-1)$ for $n\in\mathbb{N}$ vanish. It is clear that the action of $L$ preserves this set. In addition, Corollary \ref{conjW} and Proposition \ref{conjpres} extend to
\begin{prop}
The action of $L$ on the set of degree decreasing operators is transitive. Moreover, the set of elements $A \in L$ that satisfy $a_{n,0}=\delta_{n,0}$ for every $n$ is a subgroup of $L$, with respect to which the set of degree decreasing operators is a principal homogenous space. \label{acttrans}
\end{prop}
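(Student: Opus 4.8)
The plan is to prove the two assertions of Proposition \ref{acttrans} separately, relying heavily on the machinery of Proposition \ref{AppserDW} and the action from Lemma \ref{action}. For transitivity, I would fix a degree decreasing operator $Q$, represented by a strictly lower triangular matrix $M$ whose sub-diagonal entries $(M)_{n,n-1}$ are all non-zero. The claim is that there exists $A \in L$ with $A^{-1}M_{W}A = M$, equivalently $M_{W}A = AM$. I would build $A$ one column at a time: the equation $M_{W}A = AM$, read row by row, expresses the $n$th row of $A$ (times $M_{W}$, which shifts) in terms of lower rows of $A$ composed with $M$. Concretely, starting from $a_{0,0} = 1$ (or any non-zero scalar) and $a_{n,k}=0$ for $k > n$, the sub-diagonal non-vanishing of $M_{W}$ (entries $\frac{w_{n}}{w_{n-1}}\neq 0$) lets one solve recursively for the entries of the next row, and the sub-diagonal non-vanishing of $M$ guarantees the diagonal entries of $A$ stay non-zero, so that $A$ is invertible and lies in $L$. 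Thus $L$ acts transitively.

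For the second assertion, I would first check that $H = \{A \in L \mid a_{n,0} = \delta_{n,0}\}$ is a subgroup. Membership says the first column of $A$ is $e_{0}$, i.e. $p_{n}(0) = \delta_{n,0}$; in the language of Proposition \ref{genfunc} (applied with the trivial weight, or directly) this is the condition $C_{A_{0},W} = 1$, but more cleanly: $A \in H$ iff $Ae_{0} = e_{0}$. Since $(AB)e_{0} = A(Be_{0}) = Ae_{0} = e_{0}$ and $A^{-1}e_{0} = A^{-1}(Ae_{0}) = e_{0}$, closure under product and inverse is immediate. Next, to see that the degree decreasing operators form a principal homogeneous space under $H$, I must show that the stabilizer of any given degree decreasing operator $Q$ (with matrix $M$) under the $L$-action is exactly a \emph{single} coset — equivalently, that $\mathrm{Stab}_{L}(M) \cap$ (each coset of $H$) is a point, and that $H$ acts freely and transitively. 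By transitivity of $L$ it suffices to handle $M = M_{W}$: its stabilizer in $L$ is, by Theorem \ref{actdesc}, the group of $W$-Appell sequences, which by Proposition \ref{AppserDW} consists exactly of the matrices $\alpha(M_{W})$ for $\alpha \in \mathbb{F}\ldbrack y \rdbrack$ with $v(\alpha) = 0$. The key observation is that $L = H \cdot \mathrm{Stab}_{L}(M_{W})$ with the intersection trivial: given $A \in L$, its first column is $C_{A_{0},W}$, a power series of valuation $0$, call it $\alpha$; then $\alpha(M_{W})^{-1} A$ has first column $\alpha(M_{W})^{-1}(Ae_{0})$, and since $\alpha(M_{W})e_{0}$ has exactly the entries $\frac{a_{n,0}w_{n}}{w_{0}w_{n}} = a_{n,0}$ this first column becomes $e_{0}$, so $\alpha(M_{W})^{-1}A \in H$; and if $\alpha(M_{W}) \in H$ then its first column $e_{0}$ forces $\alpha = 1$. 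Hence every $L$-orbit point over $M_{W}$ is hit by exactly one element of $H$, which is precisely the principal homogeneous space statement; transporting along the transitive action gives it for every degree decreasing operator.

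The main obstacle I anticipate is the bookkeeping in the freeness/uniqueness step: one must be careful that "$H$ acts freely on the set of degree decreasing operators" really follows from "$H \cap \mathrm{Stab}_{L}(M_{W}) = \{\mathrm{id}\}$" together with transitivity of $L$ — this is the standard fact that if a group $G$ acts transitively on a set $X$ with point stabilizer $K$, and $H \leq G$ satisfies $G = HK$ and $H \cap K = \{e\}$, then $H$ acts simply transitively on $X$. I would state this group-theoretic lemma cleanly (or just inline the two-line verification: $H \to X$, $h \mapsto h\cdot x_{0}$ is surjective because $G = HK$ and injective because $h_{1}x_{0} = h_{2}x_{0} \Rightarrow h_{2}^{-1}h_{1} \in H \cap K$) and apply it with $G = L$, $X$ the set of degree decreasing operators, $x_{0} = M_{W}$, $K$ the $W$-Appell group. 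Everything else — the recursive construction of $A$ in the transitivity argument, and the subgroup check for $H$ — is routine triangular-matrix manipulation of the kind already used repeatedly in the paper.
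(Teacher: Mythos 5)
Your proposal is correct, and while the transitivity half is essentially the paper's argument (a recursive solution of a conjugation equation between $A$, $M_W$, and $M$, with the first column of $A$ as the free parameter), the principal-homogeneous-space half takes a genuinely different route. The paper stays entirely inside the recursion: it fixes $W(t)=\frac{1}{1-t}$, compares the entries of $AM_W$ and $MA$, observes that the equality determines every column of $A$ inductively from column $0$, and reads off both existence (transitivity) and uniqueness (freeness) at once from the free choice $a_{n,0}=\delta_{n,0}$; no earlier structural results are needed. You instead identify the stabilizer of $M_W$ via Theorem \ref{actdesc} and Proposition \ref{AppserDW} as the Appell group $\{\alpha(M_W):v(\alpha)=0\}$, prove the exact factorization $L=\mathrm{Stab}_L(M_W)\cdot H$ with trivial intersection by peeling off $\alpha(M_W)$ with $\alpha=C_{A_0,W}$ (your computation that the first column of $\alpha(M_W)$ is exactly $(a_{n,0})_n$ is right), and then invoke the standard lemma that $G=KH$, $H\cap K=\{e\}$, and transitivity of $G$ give simple transitivity of $H$ — which you correctly flag and verify. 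This is legitimate, since both cited results precede the proposition, and it is conceptually cleaner; what it costs is self-containedness, and what the paper's version buys is that existence and uniqueness come from one elementary matrix computation. Two small bookkeeping points in your write-up: in the direction $M_WA=AM$ it is the sub-diagonal of $M$ you divide by and the sub-diagonal of $M_W$ that propagates the non-vanishing of the diagonal of $A$ (you stated the roles the other way, which is how they fall out in the paper's direction $AM_W=MA$ — immaterial, since both are non-zero); and since the action here is a right action $M\mapsto A^{-1}MA$, the factorization you actually produce is $L=KH$, which is what the right-action version of your lemma needs (and is equivalent to $G=HK$ anyway, both factors being subgroups).
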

We recall that a \emph{principal homogenous space} for a group $G$ is a set on which $G$ operates transitively with trivial stabilizers.

\begin{proof}
The fact that this subset of $L$ is a subgroup is easily verified by matrix multiplication. We fix one degree decreasing operator, the one corresponding to the matrix $M_{W}$ for $W(t)=\frac{1}{1-t}=\sum_{n=0}^{\infty}t^{n}$ (i.e., with $w_{n}=1$ for all $n$). It suffices to show that for any strictly lower triangular matrix $M$ representing a degree decreasing operator there exists a unique element $A \in L$ such that $AM_{W}A^{-1}=M$ and $a_{n,0}=\delta_{n,0}$. Denote the entries of $M$ by $m_{n,k}$, and compare the entries of $AM_{W}$ and $MA$. The $(n,k)$-entry of the former product is just $a_{n,k+1}$, while the same entry of the latter one is $\sum_{l=k}^{n-1}m_{n,l}a_{l,k}$. Hence $AM_{W}=MA$ if and only if every column of $A$ can be obtained from the previous one through multiplication by $M$. Therefore such $A$ always exists, and simple induction shows that any choice of 0th column gives a lower triangular matrix, which lies in $L$ if $a_{0,0}\neq0$. Fixing this column according to our hypothesis on $A$ thus completes the  proof of the proposition.
\end{proof}

\smallskip

Recall that the product on $\mathbb{F}[x]^{*}$ appearing in Lemma \ref{Fx*ring} is also based on $W$. We believe that there should be an action of $L$ on the set of multiplications on $\mathbb{F}[x]^{*}$ making it isomorphic to $\mathbb{F}\ldbrack y \rdbrack$ and preserving valuations, such that the stabilizer of the multiplication $\cdot_{W}$ is again the subgroup of $L$ corresponding to $W$-Appell sequences. However, as such multiplications are, in some sense, 3-dimensional objects (i.e., are represented by algebraic objects whose entries have 3 indices), we do not pursue this subject here further. In addition, recalling that a polynomial sequence being $W$-Sheffer implies many combinatorial properties of this sequence, we conclude this section by suggesting that higher Sheffer sequences, defined by taking the normalizer in $L$ of $R_{W}$ and repeating this construction, may also turn out to have some combinatorial importance as well. It is reasonable to conjecture that every such iteration would increase the type, in the terminology of \cite{[Sh]}, by 1. We leave this question, however, for further research.

\section{Some Examples and Relations \label{ExRel}}

We present the most classical and natural examples for the choice of $W$, with the resulting operators.

\noindent\textbf{Example: Exponentials}. Assume the $\mathbb{F}$ is of characteristic 0, fix $0\neq\lambda\in\mathbb{F}$, and take $w_{n}=\lambda^{n}n!$. In this case we have $W(t)=e^{t/\lambda}$, and the operator $D_{W}$, taking $\frac{x^{n}}{\lambda^{n}n!}$ to $\frac{x^{n-1}}{\lambda^{n-1}(n-1)!}$, is the usual derivative $\frac{d}{dx}$ multiplied by $\lambda$. The $W$-translation $T_{h,W}$ takes $x^{n}$ to $\sum_{k=0}^{n}\binom{n}{k}h^{n-k}x^{k}=(x+h)^{n}$ (in correspondence with the description of $T_{h,W}$ as $e^{hD_{W}/\lambda}=e^{hd/dx}$). It is thus indeed a translation $T_{h,W}(p)(x)=p(x+h)$ (whence the name). Hence the corresponding Appell sequences are those sequences $\{p_{n}(x)\}_{n\in\mathbb{N}}$ which satisfy $p_{n}'=np_{n-1}$ (the $\lambda$s cancel). The Sheffer sequences are defined by the operator $Q_{A}$ which sends $p_{n}$ to $\lambda np_{n-1}$ commuting with replacing the argument $x$ by $x+h$ (or just with $\frac{d}{dx}$), and the binomial sequences satisfy $p_{n}(x+h)=\sum_{k=0}^{n}\binom{n}{k}p_{n-k}(h)p_{k}(x)$ (indeed, a binomial relation). This is the only case where the composition $T_{g,W} \circ T_{h,W}$ gives another translation $T_{g+h,W}$, and the product of two evaluation functionals $\varepsilon_{h}$ and $\varepsilon_{h}$ is also an evaluation functional $\varepsilon_{g+h}$, for any $g$ and $h$.

\smallskip

\noindent\textbf{Example: Geometric Series}. Now let $\mathbb{F}$ be arbitrary, fix again such $\lambda$, and consider the case where $w_{n}=\lambda^{n}$. The series $W(t)$ here equals $\frac{\lambda}{\lambda-t}$, and the operator $D_{W}$, whose action sends $\frac{x^{n}}{\lambda^{n}}$ to $\frac{x^{n-1}}{\lambda^{n-1}}$, is given by $p\mapsto\lambda\frac{p(x)-p(0)}{x}$. The formula for $T_{h,W}(x^{n})$ is $\sum_{k=0}^{n}h^{n-k}x^{k}=\frac{x^{n+1}-h^{n+1}}{x-h}$ here. As $D_{W}$ is some normalization of $\frac{\lambda}{x}$, we find see that $T_{h,W}$ roughly multiplies each polynomial by $\frac{1}{1-h/x}=\frac{x}{x-h}$: Indeed, the exact formula for the action of $T_{h,W}$ is given by $T_{h,W}(p)(x)=\frac{xp(x)-hp(h)}{x-h}$ in this case. In addition, composing $T_{h,W}$ with $D_{W}$ is equivalent to subtracting the identity operator from it and dividing by $\frac{h}{\lambda}$. This operation sends a polynomial $p$ to $\widetilde{p}_{h}(x)=\lambda\frac{p(x)-p(h)}{x-h}$. A sequence $\{p_{n}(x)\}_{n\in\mathbb{N}}$ is Appell in this context if it satisfies the condition $p_{n}(x)-p_{n}(0)=xp_{n-1}(x)$. For Sheffer sequences we require the condition that for every $h\in\mathbb{F}$ the operator $Q_{A}$ taking $p_{n}$ to $\lambda p_{n-1}$ also takes $\widetilde{p}_{n,h}$ to $\lambda\widetilde{p}_{n-1,h}$. The binomial sequences in this setting are those which satisfy $p_{0}(x)=1$, $p_{n}(0)=0$ for all $n\geq1$, and the equality $\frac{p_{n}(x)}{x}-\frac{p_{n}(h)}{h}=(x-h)\sum_{k=1}^{n-1}\frac{p_{n-k}(h)}{h}\cdot\frac{p_{k}(x)}{x}$ for every $n\geq1$.

We remark that the Sheffer sequences in this case are characterized in \cite{[LM]} as those sequences which satisfy a recursion relation of the sort which we write as $\sum_{k=0}^{n}g_{n-k}p_{k}(x)=xp_{n-1}(x)+f_{n}$ for $n\geq0$ (with $p_{-1}=0$) with two invertible power series $f(y)=\sum_{n=0}^{\infty}f_{n}y^{n}$ and $g(y)=\sum_{n=0}^{\infty}g_{n}y^{n}$ from $\mathbb{F}\ldbrack y \rdbrack$. When checking what happens with this property for general $W$, one sees that conjugating the appropriate matrices by the matrix $U$ from Corollary \ref{conjW} and Proposition \ref{conjpres} does not give nice recursive relations in general. Indeed, the resulting relations involve the operator $D_{W}$, so that in the exponential case, for example, we get differential equations for the sequence. This is also related to the fact that the series $g(y)$ and $f(y)$ are $\frac{y}{\beta(y)}$ and $\frac{y\alpha(y)}{\beta(y)}$ using the parameters of $A \in R_{W}$ from Proposition \ref{genfunc}: With our choice of $W$ the expression $\alpha(y)W\big(x\beta(y)\big)=\frac{\alpha(y)}{1-x\beta(y)}$ can be easily written as $\frac{f(y)}{g(y)-xy}$ using the parameters $f$ and $g$, a property which is lost for other choices of $W$ (since $\alpha(y)=\frac{f(y)}{g(t)}$ and $\beta(y)=\frac{y}{g(y)}$ involve the inverse of $g$ in general).

\smallskip

The choice of $w_{n}=\big(\frac{\lambda}{1-q}\big)^{n}\prod_{j=1}^{n}(1-q^{j})$, with $q\in\mathbb{F}$ which is neither 0 nor a root of unity (e.g., some number which is transcendental over the prime field of $\mathbb{F}$) gives the \emph{$q$-umbral calculus}. Here $D_{W}(p)(x)=\lambda\frac{p(qx)-p(x)}{qx-x}$ is the $q$-derivative multiplied by $\lambda$, the equality which characterizes $W$-Appell sequences is $p_{n}(qx)=p_{n}(x)+x(q^{n}-1)p_{n-1}(x)$, and $T_{h,W}(p)(x)$ is described by the formula $\sum_{l=0}^{\infty}\sum_{k=0}^{l}\binom{l}{k}_{q}\frac{(-1)^{l-k}h^{l}p(q^{k}x)}{(qx-x)^{l}}q^{-[kl-\binom{k+1}{2}]}$, where $\binom{l}{k}_{q}$ is the $q$-binomial coefficient $\prod_{j=k+1}^{l}(1-q^{j})/\prod_{j=1}^{l-k}(1-q^{j})$. In this case the sequence of polynomials whose associated matrix represents $T_{h,W}$ take the form $p_{n}(x)=\prod_{j=0}^{n-1}(x+hq^{j})$, i.e., the roots of $p_{n}$ are the first $n$ terms of a geometric sequence with quotient $q$. As neither the formula for $T_{h,W}$ nor the other related ones seem to reduce to succinct expressions, we do not follow the detail of this example here. We just remark that when $\mathbb{F}$ is of characteristic 0 then the limit $q\to1$ exists (at least formally), yielding the exponential example from above.

\smallskip

The behavior of these examples with respect to the parameter $\lambda$ illustrate the operation of multiplying $w_{n}$ by $\lambda^{n}$ in the general case. This is given in the following
\begin{prop}
Let $W(t)=\sum_{n=0}^{\infty}\frac{t^{n}}{w_{n}}\in\mathbb{F}\ldbrack t \rdbrack$ and a non-zero element $\lambda\in\mathbb{F}$ be given. Then multiplying each $w_{n}$ by $\lambda^{n}$ leaves $R_{W}$, as well as its subgroups of Appell and binomial sequences, invariant. \label{rescale}
\end{prop}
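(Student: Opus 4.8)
The plan is to read off the whole statement from the group‑theoretic description in Theorem \ref{actdesc}, using the single bookkeeping observation that rescaling the weights only rescales $M_{W}$ by a non‑zero scalar. Write $\widetilde{W}(t)=\sum_{n}\frac{t^{n}}{\widetilde{w}_{n}}$ with $\widetilde{w}_{n}=\lambda^{n}w_{n}$; then $\widetilde{w}_{0}=1$ and $\widetilde{w}_{n}\neq0$, so $\widetilde{W}$ is again an admissible weight. From the definition of $M_{W}$, whose only non‑zero entries are $(M_{W})_{n,n-1}=\frac{w_{n}}{w_{n-1}}$, we get $(M_{\widetilde{W}})_{n,n-1}=\frac{\widetilde{w}_{n}}{\widetilde{w}_{n-1}}=\lambda\cdot\frac{w_{n}}{w_{n-1}}$, i.e. $M_{\widetilde{W}}=\lambda M_{W}$. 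Now, by Theorem \ref{actdesc} the group of $W$‑Appell sequences is the stabilizer of $M_{W}$ under the action of Lemma \ref{action}, which is just the commutant $\{A\in L: AM_{W}=M_{W}A\}$; since $\lambda\neq0$, an element of $L$ commutes with $M_{W}$ if and only if it commutes with $\lambda M_{W}=M_{\widetilde{W}}$. Hence the stabilizers of $M_{W}$ and of $M_{\widetilde{W}}$ coincide as subgroups of $L$, so the $\widetilde{W}$‑Appell group equals the $W$‑Appell group.

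Next I would handle the Riordan group. By the second assertion of Theorem \ref{actdesc}, $R_{W}$ is the normalizer in $L$ of the $W$‑Appell subgroup, and likewise $R_{\widetilde{W}}$ is the normalizer in $L$ of the $\widetilde{W}$‑Appell subgroup. Since these two subgroups were just shown to be identical, their normalizers in $L$ are identical, so $R_{\widetilde{W}}=R_{W}$. For the binomial subgroup I would invoke Proposition \ref{binalpha1}: an element of $R_{W}$ lies in the subgroup of $W$‑binomial sequences precisely when the associated sequence $\{p_{n}(x)\}_{n\in\mathbb{N}}$ satisfies $p_{n}(0)=\delta_{n,0}$ for every $n$, and the same criterion describes the $\widetilde{W}$‑binomial subgroup inside $R_{\widetilde{W}}$. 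As the condition $p_{n}(0)=\delta_{n,0}$ makes no reference to the weight and $R_{\widetilde{W}}=R_{W}$, the $\widetilde{W}$‑binomial subgroup coincides with the $W$‑binomial subgroup.

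The only point that genuinely needs care is the identity $M_{\widetilde{W}}=\lambda M_{W}$ together with the remark — already built into the action of Lemma \ref{action} — that the stabilizer of a matrix is literally its commutant in $L$, so that scaling the fixed matrix by a unit changes nothing; the rest is a formal consequence of results proved above. (If one prefers to avoid Section \ref{GroupOp}, the same conclusion follows directly from Definition \ref{Riordan}: substitution $y\mapsto y/\lambda$ is an automorphism of $\mathbb{F}\ldbrack y\rdbrack$ and carries $C_{A_{k},W}$ to $C_{A_{k},\widetilde{W}}$, so that for each $k$ the defining quadratic relation for $\widetilde{W}$ equals the one for $W$ re‑expressed in the variable $y/\lambda$, up to the common factor $\lambda^{2k}$; hence the two sets of relations are equivalent, and under this identification the parameters transform by $(\alpha,\beta)\mapsto\bigl(\alpha(y/\lambda),\,\lambda\beta(y/\lambda)\bigr)$, which visibly preserves both $v(\alpha)=0$, $v(\beta)=1$ and the triviality of $\alpha$ or of $\beta$, giving the invariance of the Appell and binomial subgroups once more.) I expect no real obstacle here; the work is entirely in choosing the cleanest of these two routes and keeping the powers of $\lambda$ straight.
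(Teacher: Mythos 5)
Your argument is correct, but it follows a genuinely different route from the paper. The paper's proof stays at the level of the operators of Section \ref{ShefSeq}: it observes that replacing $w_{n}$ by $\lambda^{n}w_{n}$ multiplies $D_{W}$ (and every $Q_{A,W}$) by $\lambda$ while leaving each translation $T_{h,W}=W(hD_{W})$ unchanged, so the Appell condition $Q_{A,W}=D_{W}$ and the Sheffer condition (commutation with all $T_{h,W}$) are visibly insensitive to the rescaling, and the binomial identity is preserved because the quotient $\frac{w_{n}}{w_{n-k}w_{k}}$ is invariant. You instead ride on the machinery of Section \ref{GroupOp}: the single identity $M_{\widetilde{W}}=\lambda M_{W}$ plus Theorem \ref{actdesc} (stabilizer = commutant of $M_{W}$, $R_{W}$ = normalizer of that stabilizer) gives the Appell and Riordan invariance with no computation, and Proposition \ref{binalpha1} handles the binomial subgroup since the condition $p_{n}(0)=\delta_{n,0}$ is weight-free; there is no circularity, as Theorem \ref{actdesc} precedes this proposition. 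Your parenthetical second route, via Definition \ref{Riordan} and the substitution $y\mapsto y/\lambda$, is also sound and has the added benefit of producing the explicit parameter transformation $(\alpha,\beta)\mapsto\bigl(\alpha(y/\lambda),\lambda\beta(y/\lambda)\bigr)$, which is exactly the content of the remark the paper makes immediately after its proof. In short: the paper's proof is the more elementary and self-contained one, your main route is the more conceptual one (and makes the conjugation picture of Corollary \ref{conjW} degenerate into an equality, since the conjugating matrix commutes into the statement trivially), and your alternative route recovers the finer bookkeeping of the parameters.
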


\begin{proof}
This operation multiplies $D_{W}$, as well as $Q_{A,W}$ for any $A \in L$, by $\lambda$, and replaces $W(t)$ by $W\big(\frac{t}{\lambda}\big)$. Hence $T_{h,W}$ remains the same. This immediately proves the assertion for the Appell and Sheffer sequences. For sequences of binomial type this follows either from Definition \ref{Appbin} and the invariance of the quotient $\frac{w_{n}}{w_{n-k}w_{k}}$ under this operation, or from Proposition \ref{binalpha1}. This proves the proposition.
\end{proof}
Note that the parameters $\alpha$ and $\beta$ of a Sheffer sequence are not preserved by the operation from Proposition \ref{rescale}. Indeed, Proposition \ref{conjpres} shows that these parameters will be preserved if we conjugate by the diagonal matrix containing the powers of $\lambda$ on the diagonal. With no such conjugation, the matrix whose parameters with $W$ are $\alpha$ and $\beta$ will have, with the new weight $\widetilde{W}$ satisfying $W(t)=\widetilde{W}(\lambda t)$, the parameters $\alpha\big(\frac{y}{\lambda}\big)$ and $\beta\big(\frac{y}{\lambda}\big)$.

\smallskip

We remark that Theorem \ref{actdesc} allows us to extend the definitions of $R_{W}$, as well as its Appell and binomial subgroups, to the case where  $D_{W}$ is replaced by an arbitrary degree decreasing operator. Indeed, the Appell subgroup is the stabilizer of $M$, the group of corresponding Riordan arrays is its normalizer, and the binomial subgroup consists of those matrices $W$ in the latter group in which $a_{n,0}=\delta_{n,0}$. One example for an interesting such operator is, in characteristic 0, the \emph{finite difference operator} sending a polynomial $p$ to $p(x+a)-p(x)$ (in fact, weighted finite difference operators of the sort taking $p$ to $T_{a,W}(p)-p$, in any characteristic, also produce such matrices). Moreover, Proposition \ref{acttrans} shows that all these groups are again conjugate (hence isomorphic), where for preserving the group of sequences of binomial type we restrict attention to conjugators also satisfying the condition $a_{n,0}=\delta_{n,0}$. Even though these more general Riordan arrays represent sequences of polynomials satisfying more complicated relations, it is interesting to know that these groups are all algebraically isomorphic. The question whether every element of $L$ lies in a subgroup of such more general Riordan arrays is also worth investigating.

\smallskip

The fact that a sequence of polynomials is a Sheffer sequence for some weight $W(t)=\sum_{n=0}^{\infty}\frac{t^{n}}{w_{n}}\in\mathbb{F}\ldbrack t \rdbrack$ yields a lot of combinatorial information about the sequence. Hence it may be worthwhile to find when such a sequence is Sheffer for \emph{two different} such weights. For this we recall the \emph{extended binomial coefficients}, defined over any field $\mathbb{F}$ of characteristic 0, by noting that the expression $\frac{1}{n!}\prod_{j=0}^{n-1}(\xi-j)$ for the binomial coefficient $\binom{\xi}{n}$ makes sense for $\xi\in\mathbb{F}$. We shall make use of the formula given in the following
\begin{lem}
Given $\xi$ and $\eta$ in $\mathbb{F}$ and $n\in\mathbb{N}$, we have $\sum_{r+s=n}\binom{\xi}{r}\binom{\eta}{s}=\binom{\xi+\eta}{n}$. \label{extbin} \end{lem}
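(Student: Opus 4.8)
The plan is to prove the Vandermonde-type identity $\sum_{r+s=n}\binom{\xi}{r}\binom{\eta}{s}=\binom{\xi+\eta}{n}$ by a polynomial-identity argument, reducing the case of arbitrary $\xi,\eta\in\mathbb{F}$ to the well-known case of non-negative integers. First I would observe that, for each fixed $n$, both sides are polynomial functions in the pair of variables $(\xi,\eta)$: the expression $\binom{\xi}{r}=\frac{1}{r!}\prod_{j=0}^{r-1}(\xi-j)$ is a polynomial of degree $r$ in $\xi$, and likewise for $\binom{\eta}{s}$, so the left-hand side is a polynomial of total degree $\le n$, as is $\binom{\xi+\eta}{n}=\frac{1}{n!}\prod_{j=0}^{n-1}(\xi+\eta-j)$ on the right. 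Since $\mathbb{F}$ has characteristic $0$, it contains a copy of $\mathbb{Q}$, and two polynomials over $\mathbb{F}$ that agree on all pairs of non-negative integers must be equal (an infinite set of points in each variable forces equality of coefficients); hence it suffices to verify the identity when $\xi=a$ and $\eta=b$ are non-negative integers.

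For non-negative integers $a,b$ the identity is the classical Vandermonde convolution, which I would prove by the standard generating-function comparison: expand $(1+z)^{a}(1+z)^{b}=(1+z)^{a+b}$ as polynomials in an indeterminate $z$, note that $(1+z)^{a}=\sum_{r}\binom{a}{r}z^{r}$ and similarly for $b$ and $a+b$ (here the binomial coefficients are the ordinary ones, which coincide with the extended ones on non-negative integers), and equate the coefficient of $z^{n}$ on both sides. The coefficient of $z^{n}$ in the product on the left is precisely $\sum_{r+s=n}\binom{a}{r}\binom{b}{s}$, while on the right it is $\binom{a+b}{n}$, giving the identity for integer arguments. Alternatively one could give a direct combinatorial double-counting argument (choosing $n$ elements from a set of size $a+b$ split into blocks of size $a$ and $b$), but the generating-function route is cleanest and needs no appeal to combinatorics.

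The only point requiring a little care—and the step I expect to be the main (minor) obstacle—is the passage from ``agrees on non-negative integers'' to ``equal as polynomials,'' since here we are dealing with a polynomial in \emph{two} variables and with coefficients in an arbitrary characteristic-$0$ field rather than in $\mathbb{Q}$. I would handle this by iterating the one-variable fact: fix $\xi=a$ a non-negative integer; then both sides become polynomials in $\eta$ alone that agree at every non-negative integer $b$, hence are equal as polynomials in $\eta$ over $\mathbb{F}$ (a nonzero polynomial of degree $\le n$ over a field cannot vanish at infinitely many points); this shows the identity holds for $\xi=a\in\mathbb{N}$ and \emph{all} $\eta\in\mathbb{F}$. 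Now fix an arbitrary $\eta\in\mathbb{F}$ and view both sides as polynomials in $\xi$; they agree at every non-negative integer $\xi=a$, hence are equal as polynomials in $\xi$, so the identity holds for all $\xi,\eta\in\mathbb{F}$. This completes the proof; the argument uses characteristic $0$ exactly once, to guarantee the binomial coefficients are well-defined and that $\mathbb{N}$ embeds as an infinite subset of $\mathbb{F}$.
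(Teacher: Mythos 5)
Your proof is correct, but it takes a genuinely different route from the paper. The paper argues by direct induction on $n$: it writes $\binom{\xi+\eta}{n+1}=\frac{\xi+\eta-n}{n+1}\binom{\xi+\eta}{n}$, splits the factor $\frac{\xi+\eta-n}{n+1}$ as $\frac{\xi-r}{n+1}+\frac{\eta-s}{n+1}$ inside the convolution supplied by the induction hypothesis, and reindexes the two resulting sums. You instead note that both sides are polynomials in $(\xi,\eta)$ of degree at most $n$, verify the identity at non-negative integer arguments via the classical expansion $(1+z)^{a}(1+z)^{b}=(1+z)^{a+b}$, and then upgrade to all of $\mathbb{F}\times\mathbb{F}$ by the two-step one-variable specialization argument (a polynomial of bounded degree over a field cannot vanish at the infinitely many images of $\mathbb{N}$); you carry out that density step carefully, and it is exactly where characteristic $0$ enters. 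The trade-off is worth noting: your reduction is conceptually clean and leans on the familiar integer Vandermonde convolution, but it uses characteristic $0$ essentially (it needs $\mathbb{N}$ to embed as an infinite subset of $\mathbb{F}$), whereas the paper's induction only ever divides by $n+1$ and so remains valid over any field in which the relevant factorials are invertible --- an extension the paper explicitly invokes later, inside the converse direction of the proof of Theorem \ref{AppShef}, where the characteristic is not yet known to be $0$. So your argument fully proves the lemma as stated, but it would not support that later, more general use without modification.
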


\begin{proof}
We apply induction on $n$. The case $n=0$ is trivial. Assume that the equality holds for $n$, and write $\binom{\xi+\eta}{n+1}$ as $\frac{\xi+\eta-n}{n+1}\binom{\xi+\eta}{n}$. The induction hypothesis allows us to write the latter expression as \[\frac{\xi+\eta-n}{n+1}\sum_{r+s=n}\binom{\xi}{r}\binom{\eta}{s}=\sum_{r+s=n}\bigg[\frac{\xi-r}{n+1}+\frac{\eta-s}{n+1}\bigg]\binom{\xi}{r}\binom{\eta}{s}=\]
\[=\sum_{\substack{\tilde{r}+s=n+1 \\ \tilde{r}>0}}\frac{\tilde{r}}{n+1}\binom{\xi}{\tilde{r}}\binom{\eta}{s}+\sum_{\substack{r+\tilde{s}=n+1 \\ \tilde{s}>0}}\frac{\tilde{s}}{n+1}\binom{\xi}{r}\binom{\eta}{\tilde{s}},\] where we have set $\tilde{r}=r+1$ and $\tilde{s}=s+1$ respectively. We may include the values $\tilde{r}=0$ and $\tilde{s}=0$ because the corresponding summands vanish, and after merging the sums and using the equality $\frac{\tilde{r}}{n+1}+\frac{\tilde{s}}{n+1}=1$ we get the desired expression $\sum_{r+s=n+1}\binom{\xi}{r}\binom{\eta}{s}$. This proves the lemma.
\end{proof}
Putting $\eta=1$ in Lemma \ref{extbin} and noting that $\binom{1}{s}$ is 1 for $s=0$ and $s=1$ and 0 for $s>1$ yields
\begin{cor}
The equality $\binom{\xi+1}{l}=\binom{\xi}{l}+\binom{\xi}{l-1}$ holds for any
$l\geq1$ and $\xi\in\mathbb{F}$. \label{binpm1}
\end{cor}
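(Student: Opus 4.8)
The plan is to obtain Corollary \ref{binpm1} as an immediate specialization of Lemma \ref{extbin}. First I would put $\eta=1$ and $n=l$ in the identity
\[\sum_{r+s=n}\binom{\xi}{r}\binom{\eta}{s}=\binom{\xi+\eta}{n},\]
which gives $\binom{\xi+1}{l}=\sum_{s=0}^{l}\binom{\xi}{l-s}\binom{1}{s}$.

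Next I would evaluate the extended binomial coefficients $\binom{1}{s}$ directly from the defining formula $\binom{1}{s}=\frac{1}{s!}\prod_{j=0}^{s-1}(1-j)$: for $s=0$ the empty product is $1$, for $s=1$ the single factor is $1-0=1$, and for every $s\geq2$ the product contains the factor $1-1=0$, so $\binom{1}{s}=0$. Hence all but the terms $s=0$ and $s=1$ drop out of the sum, leaving exactly $\binom{\xi+1}{l}=\binom{\xi}{l}+\binom{\xi}{l-1}$, which is the assertion. Here the hypothesis $l\geq1$ is what makes $\binom{\xi}{l-1}$ a legitimate extended binomial coefficient, so nothing further is needed.

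There is essentially no obstacle: the only point requiring a moment's attention is the vanishing of the degenerate values $\binom{1}{s}$ for $s\geq2$, which is forced by the $(1-1)$ factor. If one preferred to avoid invoking Lemma \ref{extbin} at all, the same identity can alternatively be checked by a one-line manipulation of the product formula, factoring $\prod_{j=0}^{l-2}(\xi-j)$ out of the difference $\binom{\xi+1}{l}-\binom{\xi}{l}$ and simplifying $(\xi+1)-(\xi-l+1)=l$ against the $\frac{1}{l!}$; but the specialization of Lemma \ref{extbin} is the cleanest route and is the one I would present.
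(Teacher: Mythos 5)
Your proposal is correct and matches the paper's own argument: both obtain the identity by setting $\eta=1$ in Lemma \ref{extbin} and observing that $\binom{1}{0}=\binom{1}{1}=1$ while $\binom{1}{s}=0$ for $s\geq2$, so only the two surviving terms remain. The explicit verification of the vanishing via the factor $1-1$ in the product is a nice touch but adds nothing beyond what the paper states.
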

It is clear that the expressions in Lemma \ref{extbin} and Corollary \ref{binpm1} are defined, and the assertions hold, not only in characteristic 0, but wherever the characteristic of $\mathbb{F}$ does not divide any of the factorials appearing in the denominators.

Now, if one sequence is Appell then we have the following
\begin{thm}
Take $W$ as above, and let $A$ be the element of $R_{W}$ representing a $W$-Appell sequence with parameter $\alpha(y)=\sum_{n=0}^{\infty}c_{n}y^{n}$, where $c_{0}\neq0$. This matrix lies in $R_{\widetilde{W}}$ for some weight $\widetilde{W}(t)=\sum_{n=0}^{\infty}\frac{t^{n}}{\widetilde{w}_{n}}\in\mathbb{F}\ldbrack t \rdbrack$ in the following two cases: $(i)$ The expression $\gamma_{k}=\frac{\widetilde{w}_{k}w_{k+1}}{\widetilde{w}_{k+1}w_{k}}$ is a non-zero constant, independent of $k$. $(ii)$ $\mathbb{F}$ has characteristic 0, $\gamma_{k}$ is a non-constant, never vanishing linear function of $k$, and $\alpha(y)=c_{0}e^{hy}$ for some $h\in\mathbb{F}$. Conversely, if $c_{1}\neq0$ then $A \in R_{\widetilde{W}}$ only if one of the conditions $(i)$ or $(ii)$ is satisfied. \label{AppShef}
\end{thm}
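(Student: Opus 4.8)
The plan is to work entirely with the defining conditions $w_{k}^{2}C_{A_{k},W}^{2}=w_{k-1}C_{A_{k-1},W}w_{k+1}C_{A_{k+1},W}$ for the weight $\widetilde{W}$, rewritten in terms of the data for $W$, together with the explicit formula for the columns of a $W$-Appell matrix coming from Proposition \ref{AppserDW}. Recall that if $A$ is $W$-Appell with $\alpha(y)=\sum_{n}c_{n}y^{n}$, then $C_{A_{k},W}(y)=\alpha(y)\frac{y^{k}}{w_{k}}$, so the $(n,k)$-entry of $A$ is $a_{n,k}=\frac{c_{n-k}w_{n}}{w_{k}w_{n-k}}$. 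For the weight $\widetilde{W}$ the relevant column series is $C_{A_{k},\widetilde{W}}(y)=\sum_{n}a_{n,k}\frac{y^{n}}{\widetilde{w}_{n}}=\sum_{m}c_{m}\frac{w_{m+k}}{w_{k}w_{m}\widetilde{w}_{m+k}}y^{m+k}$. Thus the first step is to show that $A\in R_{\widetilde{W}}$ is equivalent to: after pulling out $y^{k}$, the series $E_{k}(y):=\sum_{m}c_{m}\frac{w_{m+k}}{w_{k}w_{m}\widetilde{w}_{m+k}}y^{m}$ (which has $E_{k}(0)=c_{0}\neq0$) has the property that $\widetilde{w}_{k}C_{A_{k},\widetilde{W}}/\widetilde{w}_{k-1}C_{A_{k-1},\widetilde{W}}$ is independent of $k$, i.e.\ the ratio $\frac{\widetilde{w}_{k}}{\widetilde{w}_{k-1}}\cdot\frac{y\,E_{k}(y)}{E_{k-1}(y)}$ does not depend on $k$. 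Writing $u_{k}=\frac{w_{k}}{\widetilde{w}_{k}}$, the coefficient of $y^{m}$ in $E_{k}$ is $c_{m}\frac{u_{m+k}}{u_{m}u_{k}}\cdot\frac{w_{k}}{\widetilde{w}_{k}}\cdot(\text{stuff independent of }k\text{ at }m)$—so it is cleanest to absorb constants and reduce the whole problem to a functional equation on the double-indexed quantity $\rho(k,m):=\frac{u_{m+k}}{u_{m}u_{k}}$, or equivalently on $\gamma_{k}=\frac{\widetilde{w}_{k}w_{k+1}}{\widetilde{w}_{k+1}w_{k}}=\frac{u_{k+1}}{u_{k}}$.

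The second step is to turn the $k$-independence of the ratio into a clean recursion. Comparing coefficients of each $y^{m}$ in the identity $\widetilde{w}_{k}C_{A_{k},\widetilde{W}}=\lambda(y)\cdot\widetilde{w}_{k-1}C_{A_{k-1},\widetilde{W}}$ (where $\lambda(y)=\sum_{j}\lambda_{j}y^{j}$ is the common ratio, $v(\lambda)=1$) gives, for every $k\geq1$ and every $m\geq0$, a relation of the form $c_{m}\,\frac{u_{m+k}}{u_{m}}=\sum_{j}\lambda_{j}\,c_{m-j+1}\,\frac{u_{m-j+1+k-1}}{u_{m-j+1}}$ up to the constants already extracted; the key structural consequence is that, after dividing successive instances, one obtains for all $k\geq 1$ and suitable $m$ a relation expressing $\gamma_{k}=u_{k+1}/u_{k}$ times a ratio of $c$'s, equal to the same expression with $k$ replaced by $k-1$. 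Concretely I expect the cleanest route is: the condition forces, for each fixed $m\geq1$ with $c_{m}\neq 0$ (and using $c_{1}\neq0$ in the converse), an equation of the shape $c_{m}\bigl(\gamma_{k}\gamma_{k+1}\cdots\text{(telescoping)}\bigr)$-type identity that pins down how $\prod \gamma$ grows; isolating the $m=1$ case (available since $c_{1}\neq0$) yields that $\gamma_{k}\big/\gamma_{k-1}$ is itself forced, and then the $m=2$ case overdetermines it unless $\alpha$ is very special. This is where cases $(i)$ and $(ii)$ emerge: if $\gamma_{k}$ is constant, call it $q$, then $u_{k}=u_{0}q^{k}$ and $\widetilde{w}_{n}=w_{n}q^{-n}/\text{const}$, which is exactly the rescaling of Proposition \ref{rescale} (up to the $\lambda$-renormalization there), so $A$ stays in $R_{\widetilde W}$ automatically for any $\alpha$—giving direction $(i)$. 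If $\gamma_{k}$ is a genuinely non-constant affine function $a+bk$ of $k$ (with $b\neq0$, never zero), then $u_{k+1}/u_{k}=a+bk$ forces $u_{k}=u_{0}\prod_{j=0}^{k-1}(a+bj)=u_{0}b^{k}\binom{k+a/b-1}{k}k!$-type product, i.e.\ $\widetilde{w}_{k}$ involves a Pochhammer/Gamma factor; feeding this back into the $y^{m}$-comparison, one finds the coefficients $c_{m}$ must satisfy $c_{m}=c_{0}h^{m}/m!$ for a single scalar $h$ (this is the only way the resulting overdetermined system for the $\lambda_{j}$ is consistent, and it is exactly where characteristic $0$ is needed, both to divide by $m!$ and to make sense of $\alpha(y)=c_{0}e^{hy}$), which is direction $(ii)$.

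The third step is the converse. In case $(i)$, identify the transformation $w_{n}\mapsto \widetilde{w}_{n}$ as a composition of the rescaling $w_{n}\mapsto q^{n}w_{n}$ (Proposition \ref{rescale}, which leaves $R_{W}$ invariant) with a trivial overall normalization, and conclude $R_{W}=R_{\widetilde W}$, so in particular $A\in R_{\widetilde W}$; no hypothesis on $\alpha$ is needed here. In case $(ii)$, having $\gamma_{k}=a+bk$ and $\alpha(y)=c_{0}e^{hy}$, one checks directly—using Lemma \ref{extbin} (the Vandermonde/Chu identity for extended binomial coefficients) to resum the Pochhammer products against the exponential series—that $\widetilde{w}_{k}C_{A_{k},\widetilde W}(y)$ is $c_{0}$ times $\widetilde{w}_{0}$ times $e^{hy}$ times $(\text{linear-in-}y)^{k}$-type expression, more precisely that the ratio $\widetilde{w}_{k}C_{A_{k},\widetilde W}/\widetilde{w}_{k-1}C_{A_{k-1},\widetilde W}$ is indeed independent of $k$; this is precisely the place Lemma \ref{extbin} is tailor-made for, since the convolution of two Pochhammer-weighted series with parameters adding correctly collapses by that lemma. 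For the final "conversely, if $c_{1}\neq0$ then only $(i)$ or $(ii)$" clause, the point is that the $m=1$ comparison (legitimate because $c_{1}\neq0$) already forces $\gamma_{k}$ to satisfy a first-order recursion with at most affine solutions after the $m=2$ comparison is imposed, and if $\gamma_{k}$ is affine but non-constant the $m\geq2$ comparisons force $c_{m}=c_{0}h^{m}/m!$ as above; if $\gamma_{k}$ is constant we are in $(i)$, and the degenerate possibility $\gamma_{k}$ non-affine is excluded by the same two comparisons.

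The main obstacle I anticipate is bookkeeping in the second step: extracting from the infinite family of coefficient comparisons (indexed by both $k$ and $m$) the precise dichotomy "$\gamma_{k}$ constant, $\alpha$ arbitrary" versus "$\gamma_{k}$ affine non-constant, $\alpha$ exponential," and in particular pinning down why no other $\alpha$ works in the second regime and why no non-affine $\gamma_{k}$ can occur at all. The cleanest formulation is probably to fix the ratio series $\lambda(y)$ abstractly, write the $k=1$ comparison to solve for $\lambda_{j}$ in terms of $c_{j}$ and $\gamma_{1}$, then impose the $k=2$ comparison as a consistency condition; that single consistency condition, read off coefficientwise in $m$, should already yield both the affineness of $\gamma$ and, in the non-constant case, the recursion $c_{m+1}/c_{m}=\text{const}$ characterizing the exponential—with characteristic $0$ entering exactly when we divide by the integers produced by the Pochhammer shifts.
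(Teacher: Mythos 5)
Your plan follows essentially the same route as the paper (column series of $A$ with respect to $\widetilde{W}$, coefficientwise comparison of the Riordan condition, Vandermonde-type collapse in the affine case), but as written it is a programme rather than a proof: the decisive computations are only anticipated (``I expect'', ``should already yield''), and two of them are exactly where the content of the theorem sits. First, your starting formula is off: with $\alpha(y)=\sum_n c_ny^n$ as in the statement, the Appell property $\sum_n p_n(x)\frac{y^n}{w_n}=\alpha(y)W(xy)$ gives $a_{n,k}=\frac{w_nc_{n-k}}{w_k}$, not $\frac{c_{n-k}w_n}{w_kw_{n-k}}$; if you carry your version through, the coefficient comparisons characterize $\alpha(y)=c_0W(hy)$ rather than $\alpha(y)=c_0e^{hy}$ in case $(ii)$, so the normalization slip is not harmless. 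Second, the structure you propose for the converse is not quite right: the clean way the affineness of $\gamma_k$ appears is by comparing the coefficient of $y^{2k+1}$ in $\widetilde{w}_k^2C_{A_k,\widetilde{W}}^2=\widetilde{w}_{k-1}C_{A_{k-1},\widetilde{W}}\,\widetilde{w}_{k+1}C_{A_{k+1},\widetilde{W}}$, which (using $c_1\neq0$) gives the second-order recursion $2\gamma_k=\gamma_{k+1}+\gamma_{k-1}$, whose solution space is spanned by constants and linear functions; your description of a ``first-order recursion'' pinned down by the $m=1$ and $m=2$ comparisons does not by itself deliver this, and nothing in your sketch rules out a non-affine $\gamma$ beyond an assertion.

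The deeper gap is in the non-constant case of the converse: characteristic $0$ is a \emph{conclusion} of the theorem, not a hypothesis, whereas your sketch invokes it as something ``needed to divide by $m!$''. The way the paper handles this is to set $h=c_1/c_0$, write $c_l=c_0h^ld_l$, and prove by induction on $l$, from the coefficient of $y^{2k+l}$, the identity $l!\,d_l=1$ \emph{in} $\mathbb{F}$ (using Lemma \ref{extbin} and Corollary \ref{binpm1} to collapse the two convolutions of products $\prod_{j=\kappa}^{\nu-1}\gamma_j=\sigma^{\nu-\kappa}(\nu-\kappa)!\binom{\lambda/\sigma-\kappa}{\nu-\kappa}$, and the non-vanishing of the $\gamma$'s to cancel a common extended binomial coefficient $\binom{\mu-1}{l-2}$); this simultaneously forces $l!\neq0$, hence characteristic $0$, and $d_l=1/l!$. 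Without this step, or an equivalent device, your argument establishes at best that $\alpha$ is exponential \emph{assuming} characteristic $0$, which is strictly weaker than the stated converse. The parts you do have are sound and agree with the paper: case $(i)$ is indeed just the rescaling of Proposition \ref{rescale}, and the forward verification of case $(ii)$ does come down to Lemma \ref{extbin} showing both convolutions equal $\binom{2\lambda/\sigma-2k}{p-2k}$ --- but that computation, too, still needs to be written out.
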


\begin{proof}
From the equality
$\sum_{n=0}^{\infty}\sum_{k=0}^{n}a_{n,k}\frac{x^{k}y^{n}}{w_{n}}=\sum_{l=0}^{\infty}c_{l}\sum_{k=0}^{\infty}\frac{x^{k}y^{k+l}}{w_{k}}$ from the proof of Proposition \ref{Appbeta1}, we find that $a_{n,k}=\frac{w_{n}c_{n-k}}{w_{k}}$ for every $n \geq k$. Given another weight
$\widetilde{W}$, we thus have
$C_{A_{k},\widetilde{W}}(y)=\sum_{n=0}^{\infty}\frac{w_{n}c_{n-k}y^{n}}{w_{k}\widetilde{w}_{n}}$. To see whether the condition from Definition \ref{Riordan} is satisfied, we have to check when does the equality
$\widetilde{w}_{k}^{2}C_{A_{k},\widetilde{W}}^{2}=\widetilde{w}_{k-1}C_{A_{k-1},\widetilde{W}}\widetilde{w}_{k+1}C_{A_{k+1},\widetilde{W}}$ hold for every $k\geq1$. Explicitly, this equality becomes \[\frac{\widetilde{w}_{k}^{2}}{w_{k}^{2}}\sum_{n,m}\frac{w_{n}w_{m}c_{n-k}c_{m-k}y^{n+m}}{\widetilde{w}_{m}\widetilde{w}_{n}}=
\frac{\widetilde{w}_{k+1}\widetilde{w}_{k-1}}{w_{k-1}w_{k+1}}\sum_{n,m}\frac{w_{n}w_{m}c_{n-1-k}c_{m+1-k}y^{n+m}}{\widetilde{w}_{m}\widetilde{w}_{n}},\] where on the left hand side both indices $m$ and $n$ start from $k$ while on the right hand side $m$ starts from $k-1$ while $n$ starts from $k+1$. Given a number $p\geq2k$, the coefficients of $y^{p}$ are
\[\sum_{n+m=p}c_{n-k}c_{m-k}\prod_{j=k}^{n-1}\gamma_{j}\prod_{j=k}^{m-1}\gamma_{j}\quad\mathrm{and}\quad
\sum_{n+m=p}c_{n-1-k}c_{m+1-k}\prod_{j=k+1}^{n-1}\gamma_{j}\prod_{j=k-1}^{m-1}\gamma_{j},\] where the limitation on $m$ and $n$ are as above and empty products equal 1. Now, if $\gamma_{j}$ is some non-zero constant $\lambda$, regardless of $j$, then this equality clearly holds for any $\alpha$, so that $A \in R_{\widetilde{W}}$ in case $(i)$ (indeed, this condition on $\gamma_{j}$ is equivalent to $\widetilde{W}$ being related to $W$ in the manner described in Proposition \ref{rescale}, so that the assertion in this case is a consequence of that proposition). On the other hand, if $\mathbb{F}$ has characteristic 0 and $\gamma_{j}=\lambda-\sigma j$ for some $\lambda$ and $\sigma$ from $\mathbb{F}$ with $\sigma\neq0$ then any product of the form $\prod_{j=\kappa}^{\nu-1}\gamma_{j}$ for some integers $\kappa$ and $\nu$ with $\nu\geq\kappa$ can be written as $\sigma^{\nu-\kappa}(\nu-\kappa)!\binom{\lambda/\sigma-\kappa}{\nu-\kappa}$. But our assumption on $\alpha$ means that $c_{l}=\frac{c_{0}h^{l}}{l!}$ for every $l$, so that the two sums we have to compare amount to $c_{0}^{2}(h\sigma)^{p-2k}$ times $\sum_{n+m=p}\binom{\lambda/\sigma-k}{n-k}\binom{\lambda/\sigma-k}{n-k}$ and
$\sum_{n+m=p}\binom{\lambda/\sigma-1-k}{n-1-k}\binom{\lambda/\sigma+1-k}{m+1-k}$ respectively. But after the appropriate translations of indices Lemma \ref{extbin} shows
that the two latter sums both equal $\binom{2\lambda/\sigma-2k}{p-2k}$. Hence
$A \in R_{\widetilde{W}}$ also under condition $(ii)$.

Conversely, we first note that the valuation of both power series is $2k$, and
that both start with $c_{0}^{2}y^{2k}$ and continue with higher powers of $y$. The coefficient of $y^{2k+1}$ is $2c_{0}c_{1}\gamma_{k}$ on the left hand side, while on the right hand side we get $c_{0}c_{1}(\gamma_{k+1}+\gamma_{k-1})$. Assuming that $c_{1}\neq0$, we find that the difference $\gamma_{k+1}-\gamma_{k}$ must be independent of $k$, so that $\gamma_{k}$ must be of the form $\lambda-\sigma k$ where $\lambda$ and $\sigma$ are constants which are independent of $k$. Now, if $\sigma=0$ then $\lambda\neq0$ (since none of the $\gamma_{k}$ may vanish), and we are in case $(i)$. Assuming that $\sigma\neq0$, it remains to prove that $\mathbb{F}$ has characteristic 0 and that $\alpha$ has the desired form. Let $h=\frac{c_{1}}{c_{0}}\neq0$, and we write each $c_{l}$ as $c_{0}h^{l}d_{l}$ for some $d_{l}\in\mathbb{F}$. We choose some $k$, and write the equation between the coefficients of $y^{p}$ as follows: Take $p=2k+l$, let $\mu=\frac{\lambda}{\sigma}-k$, and make the index change $n=r+k$ and $m=s+k$ on the left hand side while writing $n=r+k+1$ and $m=s+k-1$ on the right hand side. Substituting the value of $\gamma_{j}$ and dividing the resulting equality by $c_{0}^{2}(\sigma h)^{p-2k=l}$ produces the equation
\[\sum_{r+s=l}d_{r}d_{s}\prod_{j=0}^{r-1}(\mu-j)\prod_{j=0}^{s-1}(\mu-j)=\sum_{r+s=l}d_{r}d_{s}\prod_{j=0}^{r-1}(\mu-1-j)\prod_{j=0}^{s-1}(\mu+1-j)\] (which is independent of $k$). The equalities for $l=0$ and $l=1$ are tautological, and $d_{0}=d_{1}=1$ by definition. We now use the equation for larger $l$ to prove that $d_{l}$ satisfies the equality $l!d_{l}=1$ in $\mathbb{F}$, so that $l!$ is invertible in $\mathbb{F}$ and $d_{l}=\frac{1}{l!}$. Indeed, the induction hypothesis allows us to write $d_{r}d_{s}=\frac{1}{r!s!}$ unless $r=l$ and $s=0$ or the other way around, with the characteristic of $\mathbb{F}$ allowing this. Our equality then becomes
\[2d_{l}\prod_{j=0}^{l-1}(\mu-j)+\!\sum_{\substack{r+s=l \\ rs\neq0}}\!\binom{\mu}{r}\binom{\mu}{s}\!=\!d_{l}\!\!\prod_{j=-1}^{l-2}\!(\mu-j)+d_{l}\!\prod_{j=1}^{l}(\mu-j)+\!\sum_{\substack{r+s=l \\ rs\neq0}}\!\binom{\mu+\!1}{r}\binom{\mu-\!1}{s}\!.\] The difference $\prod_{j=-1}^{l-2}(\mu-j)-\prod_{j=0}^{l-1}(\mu-j)$ is $l\prod_{j=0}^{l-2}(\mu-j)=l!\binom{\mu}{l-1}$ (we already know that $(l-1)!$ is invertible in $\mathbb{F}$), and the difference between $\prod_{j=0}^{l-1}(\mu-j)$ and $\prod_{j=1}^{l}(\mu-j)$ similarly equals $l\prod_{j=1}^{l}(\mu-j)=l!\binom{\mu-1}{l-1}$. Moreover, Corollary \ref{binpm1} (with our knowledge on the characteristic of $\mathbb{F}$) allows us to write $\binom{\mu+1}{r}$ as $\binom{\mu}{r}+\binom{\mu}{r-1}$ on the right hand side as well as $\binom{\mu}{s}=\binom{\mu-1}{s}+\binom{\mu-1}{s-1}$ on the left hand side. After cancellations we then obtain \[d_{l}l!\binom{\mu-1}{l-1}+\sum_{\substack{r+s=l \\ rs\neq0}}\binom{\mu}{r}\binom{\mu-1}{s-1}=d_{l}l!\binom{\mu}{l-1}+\sum_{\substack{r+s=l \\ rs\neq0}}\binom{\mu}{r-1}\binom{\mu-1}{s}.\] By Lemma \ref{extbin}, the sum on the left hand side equals $\binom{2\mu-1}{l-1}-\binom{\mu-1}{l-1}$, while the one on the right hand side is $\binom{2\mu-1}{l-1}-\binom{\mu}{l-1}$. Another application of Corollary \ref{binpm1} produces, after some additional cancellations, the equality $\binom{\mu-1}{l-2}=d_{l}l!\binom{\mu-1}{l-2}$, from which the desired equality follows since the non-vanishing of all of the numbers $\gamma_{k}$ implies that $\binom{\mu-1}{l-2}\neq0$. Hence $\mathbb{F}$ is of characteristic 0, $d_{l}=\frac{1}{l!}$ for all $l$, $c_{l}=c_{0}\frac{h^{l}}{l!}$ for all $l$, and $\alpha(y)=c_{0}e^{hy}$ as
required. This completes the proof of the theorem.
\end{proof}

\begin{cor}
Take $W(t)=e^{t}$ (in characteristic 0). Then a $W$-Appell sequence with $p_{1}(0)\neq0$ is $\widetilde{W}$-Sheffer for some weight $\widetilde{W}$ if and only if either $\widetilde{W}(t)=e^{t/\lambda}$ for some $0\neq\lambda\in\mathbb{F}$, or $\widetilde{W}(t)=(1+\sigma t)^{\lambda/\sigma}$ for some non-zero $\sigma\in\mathbb{F}$ and $\lambda\in\mathbb{F}\setminus\mathbb{N}\sigma$ and the associated matrix $A \in R_{W} \subseteq L$ from Proposition \ref{seqmat} is a non-zero scalar multiple of the matrix from Corollary \ref{ThWApp} which represents a $W$-translation $T_{h,W}$ for some $h\in\mathbb{F}$. \label{ThWtildeW}
\end{cor}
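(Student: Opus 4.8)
The plan is to obtain this as a direct specialization of Theorem~\ref{AppShef} to the weight $W(t)=e^{t}$, i.e.\ $w_{n}=n!$. First I would record that for this weight the $\alpha$-parameter satisfies $c_{1}=a_{1,0}/w_{1}=a_{1,0}=p_{1}(0)$, so the hypothesis $p_{1}(0)\neq0$ is exactly $c_{1}\neq0$; hence the converse half of Theorem~\ref{AppShef} is available, and by Theorem~\ref{comp} the sequence is $\widetilde{W}$-Sheffer if and only if $A\in R_{\widetilde{W}}$, which in turn holds if and only if condition $(i)$ or condition $(ii)$ of Theorem~\ref{AppShef} is satisfied. The entire remaining task is to translate those two conditions on the numbers $\gamma_{k}=\widetilde{w}_{k}w_{k+1}/(\widetilde{w}_{k+1}w_{k})$ into statements about the power series $\widetilde{W}$ (and, in case $(ii)$, about $A$).

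For the translation I would use $\gamma_{k}=(k+1)\widetilde{w}_{k}/\widetilde{w}_{k+1}$, which follows from $w_{k+1}/w_{k}=k+1$. In case $(i)$, demanding $\gamma_{k}$ to be a nonzero constant $\mu$ gives $\widetilde{w}_{n}=n!/\mu^{n}$, hence $\widetilde{W}(t)=\sum_{n}(\mu t)^{n}/n!=e^{\mu t}$; writing $\lambda=1/\mu$ yields $\widetilde{W}(t)=e^{t/\lambda}$ with $\lambda\neq0$, and conversely this $\widetilde{W}$ gives back $\gamma_{k}\equiv1/\lambda$. In case $(ii)$, with $\gamma_{k}=\lambda-\sigma k$ and $\sigma\neq0$, solving the recursion gives $\widetilde{w}_{n}=n!/\prod_{k=0}^{n-1}(\lambda-\sigma k)$; using $\prod_{k=0}^{n-1}(\lambda-\sigma k)=\sigma^{n}n!\binom{\lambda/\sigma}{n}$ with the extended binomial coefficient I get $\widetilde{w}_{n}=1/\bigl(\sigma^{n}\binom{\lambda/\sigma}{n}\bigr)$, so $\widetilde{W}(t)=\sum_{n}\binom{\lambda/\sigma}{n}(\sigma t)^{n}=(1+\sigma t)^{\lambda/\sigma}$. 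The requirement that all $\gamma_{k}$ be nonzero is then visibly the same as $\lambda/\sigma\notin\mathbb{N}$, i.e.\ $\lambda\in\mathbb{F}\setminus\mathbb{N}\sigma$, which also guarantees $\widetilde{w}_{n}\neq0$ for all $n$.

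It remains to recognize the extra clause of condition $(ii)$, namely $\alpha(y)=c_{0}e^{hy}=c_{0}W(hy)$, as the statement that $A$ is a nonzero scalar multiple of the matrix of $T_{h,W}$. By Corollary~\ref{ThWApp} that matrix is the element of $R_{W}$ with $\alpha$-parameter $W(hy)$ and trivial $\beta$-parameter; since $A$ is $W$-Appell its $\beta$-parameter is trivial too (Proposition~\ref{Appbeta1}), and multiplying a matrix of $R_{W}$ by a nonzero scalar multiplies its $\alpha$-parameter by that scalar while fixing $\beta$. The bijection between $R_{W}$ and parameter pairs then forces $A=c_{0}\cdot(\text{matrix of }T_{h,W})$, with $c_{0}\neq0$; conversely, from $A=c_{0}\cdot(\text{matrix of }T_{h,W})$ one reads off $\alpha(y)=c_{0}e^{hy}$ (here $h\neq0$ is forced by $p_{1}(0)\neq0$), so together with the form of $\widetilde{W}$ found above condition $(ii)$ holds and $A\in R_{\widetilde{W}}$. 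Assembling the two cases gives the asserted equivalence. I do not expect a genuine obstacle here: Theorem~\ref{AppShef} has done all the substantive work, and the only care needed is bookkeeping — keeping the reparametrizations $\lambda\leftrightarrow1/\lambda$ and the factor $\sigma^{n}n!$ straight, and passing cleanly between a matrix in $R_{W}$ and its $(\alpha,\beta)$-parameters.
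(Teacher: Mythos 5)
Your proposal is correct and follows the same route as the paper: specialize Theorem \ref{AppShef} to $w_{n}=n!$ (noting $c_{1}=p_{1}(0)$ so the converse direction applies), identify case $(i)$ with $\widetilde{W}(t)=e^{t/\lambda}$, identify $\alpha(y)=c_{0}e^{hy}$ with a scalar multiple of the $T_{h,W}$-matrix via Corollary \ref{ThWApp}, and solve the recursion $\gamma_{k}=\lambda-\sigma k$ to get $\frac{1}{\widetilde{w}_{k}}=\sigma^{k}\binom{\lambda/\sigma}{k}$, i.e.\ $\widetilde{W}(t)=(1+\sigma t)^{\lambda/\sigma}$. You merely spell out the bookkeeping (the $\lambda\leftrightarrow1/\mu$ reparametrization and the condition $\lambda\in\mathbb{F}\setminus\mathbb{N}\sigma$) that the paper leaves implicit.
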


\begin{proof}
We just substitute $W(t)=e^{t}$ in Theorem \ref{AppShef}. The case $\widetilde{W}(t)=e^{t/\lambda}$ is case $(i)$ there (or Proposition \ref{rescale}). Otherwise the requirement that $\alpha(y)=c_{0}e^{hy}$ is precisely the condition from Corollary \ref{ThWApp} for our $W$ (up to a multiplicative scalar). Moreover, the condition $\gamma_{k}=\lambda-\sigma k$ yields here $\frac{1}{\widetilde{w}_{k+1}}=\frac{\lambda-\sigma k}{\widetilde{w}_{k}(k+1)}$. This shows, by a simple induction, that $\frac{1}{\widetilde{w}_{k}}$ is the coefficient $\sigma^{k}\binom{\lambda/\sigma}{k}$ of $(1+\sigma t)^{\lambda/\sigma}$. This proves the corollary.
\end{proof}

In fact, Proposition \ref{conjpres} allows us to translate the (simpler) assertion of Corollary \ref{ThWtildeW} to the general case considered in Theorem \ref{AppShef}.

We remark that in case $c_{1}=0$ the results of Theorem \ref{AppShef} and Corollary \ref{ThWtildeW} become more complicated. To give a rough idea of this, let $d=v\big(\alpha(y)-\alpha(0)\big)$ by the minimal index $k>0$ such that $c_{k}\neq0$. Then a more detailed analysis of the proof of Theorem \ref{AppShef} shows that the linear condition holds now not for the numbers $\gamma_{k}$ but for the products $\prod_{i=0}^{d-1}\gamma_{k+i}$. This allows us to obtain relations only between $\widetilde{w}_{k}$ and $\widetilde{w}_{k+d}$, but not directly between $\widetilde{w}_{k}$ and $\widetilde{w}_{k+1}$. Indeed, if $d=\infty$ then $\alpha(y)=c_{0}$ and $A$ is a scalar matrix, which lies in $R_{\widetilde{W}}$ for every $\widetilde{W}$. This illustrates the fact that as $d$ grows, less restrictions on $\widetilde{W}$ must be imposed for the $W$-Appell sequence to be $\widetilde{W}$-Sheffer.

\noindent\textsc{Einstein Institute of Mathematics, the Hebrew University of Jerusalem, Edmund Safra Campus, Jerusalem 9190401, Israel}

\noindent E-mail address: zemels@math.huji.ac.il

\end{document}